\documentclass{amsproc} 
\usepackage{euscript}
\usepackage{cases}
\usepackage{mathrsfs}
\usepackage{bbm}
\usepackage{amssymb}
\usepackage{amsfonts,amsmath,amsxtra,mathdots,mathabx}
\usepackage{color}
\usepackage{hyperref}
\usepackage{tikz}
\usepackage{appendix,upgreek,setspace}

\allowdisplaybreaks

\DeclareFontFamily{U}{matha}{\hyphenchar\font45}
\DeclareFontShape{U}{matha}{m}{n}{
	<5> <6> <7> <8> <9> <10> gen * matha
	<10.95> matha10 <12> <14.4> <17.28> <20.74> <24.88> matha12
}{}
\DeclareSymbolFont{matha}{U}{matha}{m}{n}

\DeclareMathSymbol{\Lt}{3}{matha}{"CE}
\DeclareMathSymbol{\Gt}{3}{matha}{"CF}

\DeclareSymbolFont{mathc}{OML}{txmi}{m}{it}% txfonts
\DeclareMathSymbol{\varuu}{\mathord}{mathc}{117}
\DeclareMathSymbol{\varvv}{\mathord}{mathc}{118}
\DeclareMathSymbol{\varww}{\mathord}{mathc}{119}

\def\VA{\Vert \boldsymbol{a}_Y \Vert }

\def\BSD{   {\EuScript{D}} }

\def\valpha{\text{\scalebox{0.84}[1.02]{$\alpha$}}}   
\def\vepsilon{\upvarepsilon}
\def\vnu{\text{{\scalebox{0.9}[1]{$\nu$}}}} 
\def\vkappa{\text{{\scalebox{0.86}[1.1]{$\kappa$}}}} 
\def\vvkappa{\text{{\scalebox{0.8}[1.1]{$\varkappa$}}}} 

\def\bfF{\mathbf{F}}
\def\bfQ{\mathbf{Q}}
\def\bfK{\mathbf{K}}

\newcommand{\As}{{\mathrm {As}}}

\def\SO {\text{\raisebox{- 2 \depth}{\scalebox{1.1}{$ \text{\usefont{U}{BOONDOX-calo}{m}{n}O}  $}}}}
\def\RU {\mathrm{U}}

\newcommand{\BC}{{\mathbf {C}}}
\newcommand{\BQ}{{\mathbf {Q}}} 
\newcommand{\BR}{{\mathbf {R}}} 
\newcommand{\BZ}{{\mathbf {Z}}}

\def\Tr{ \mathit{Tr}}

\newcommand{\GL}{{\mathrm {GL}}}

\newcommand{\SL}{{\mathrm {SL}}}

\newcommand{\ra}{\rightarrow} 
\def\sumx{\sideset{}{^\star}\sum}

\def\sumn{\sideset{}{^\natural}\sum}
\def\mod{\mathrm{mod} }
\def\nd{\mathrm{d}}

\def\lp {\left (}
\def\rp {\right )}

\newcommand{\delete}[1]{}

\theoremstyle{plain}

\newtheorem{coro}{Corollary}[section]
\newtheorem{lem}{Lemma}[section]
\newtheorem{theorem}{Theorem}[section] 
\newtheorem{proposition}{Proposition}[section]

\newtheorem*{thm*}{Theorem}

\theoremstyle{remark} 
\newtheorem{remark}{Remark}[section] 

\numberwithin{equation}{section}

\begin{document}
	
		\title{On the Second Moment of $ L (1/2, \mathrm{As}(f) \times \phi)$}

	\begin{abstract}
		Let $\mathbf{F} = \mathbf{Q}(\sqrt D)$ be a real quadratic field. In this paper, we establish a large sieve inequality for the Asai lifts $ \mathrm{As} (f) $ with $f$ in a Hecke orthonormal basis of the space of Hilbert modular cusp forms of parallel weight $(k, k)$ over $ \mathbf{F} $. As an application, for a fixed Hecke--Maass cusp form $\phi $ over $\mathbf{Q}$, we prove a non-trivial bound for the second moment of the convoluted central $L$-values $ L (1/2, \mathrm{As}(f) \times \phi) $ in the $k$-aspect. 
	\end{abstract}
	
\author[C. Li and Z. Qi]{Changlin Li  and Zhi Qi}
\address{School of Mathematical Sciences\\ Zhejiang University\\Hangzhou, 310027\\China}
\email{12135012@zju.edu.cn, zhi.qi@zju.edu.cn} 
	
	\thanks{The second author was supported by National Key R\&D Program of China No. 2022YFA1005300.}

	\subjclass[2020]{11M41, 11F30, 11F66}
	\keywords{Asai lift, large sieve inequality, Petersson formula. }
	
	\maketitle
	
	 \begin{spacing}{1.3}
{  \tableofcontents}	
 \end{spacing}

	\section{Introduction}\label{secL intro}
	
	Let $\bfF = \bfQ(\sqrt D)$ be a fixed real quadratic field, with $D > 1  $ square-free. Let $\SO $, $\SO^+$, and $\RU$ denote the ring of integers, the set of totally positive integers, and  the group of units, respectively. For simplicity, assume the narrow class number $h_{\bfF} ^{+} = 1$ so that the totally positive units are squares of units and every ideal has a totally positive
	generator. Thus the set of non-zero   ideals may be identified with $ \SO^+ / \RU^2 $ (by definition, $\mathrm{U}^2 =  \{ \epsilon^2 : \epsilon \in \mathrm{U}  \}$).  
	
	Let $S_k(\mathrm{SL}_2 (\SO))$ denote the space of Hilbert modular cusp forms  of parallel even weight $(k,k)$ with respect to the Hilbert modular group $\mathrm{SL}_2 (\SO)$. Let $ H_k $ be a Hecke orthonormal basis of $S_k(\mathrm{SL}_2 (\SO))$. For each cusp form $f \in H_k$, let $\omega_f$ be its harmonic weight and %$\psi_f(\mu)$ be its normalized Fourier coefficients and 
	$\lambda_f(\mu)$ be its Hecke eigenvalues ($\mu   \in \SO^+ / \RU^2$). 
	
	The standard $L$-function associated to $f$ is defined by
	\begin{equation*}
		L(s,f) = \sum_{\mu   \in \SO^+ / \RU^2 }  \lambda_f(\mu ) N(\mu )^{-s}, \qquad \text{($\mathrm{Re} (s)>1$)}.
	\end{equation*}
	In 1977, Asai \cite{Asai-1977} introduced a kind of sub-series of $L(s,f) $, in which the summation is restricted to rational integers only. More explicitly,  Asai's $L$-function $L (s, \mathrm{As}(f))$ is defined by
	\begin{equation*}
		L(s,\As(f) ) = \zeta(2s) \sum_{n \in \BZ_{+}}  \lambda_f(n) n^{-s}, \qquad \text{($\mathrm{Re} (s)>1$)},
	\end{equation*}
and he proved many fundamental results of  $ L (s, \mathrm{As}(f)) $, such as  analytic continuation, functional equation, Euler product, and splitting formula.

  Asai's  $L$-function and its generalizations have been studied extensively in the framework of representation theory, %the literature, 
usually in the adelic language.  
 Ramakrishnan \cite{Ramakrishnan2002} and Krishnamurthy \cite{Krishnamurthy2003} proved that  $L(s,\As(f) )$ is in fact the $L$-function associated to an automorphic form on $\GL_4 (\mathbf{A}_{\bfQ})$, namely, the Asai lift $\mathrm{As}(f)$ of $f$.  Moreover, Prasad and Ramakrishnan \cite[Appendix A]{Krishnamurthy-2012} established the cuspidal criterion for $\As(f) $ as summarized in  \cite[Theorem 1.1]{Luo-Asai}.

 However, it is only recent  that Luo \cite{Luo-Asai} initiated the study on the analytic aspects  of the family of central $L$-values  $ L(1/2, \As(f) )  $. He proved the sharp mean-Lindel\"of bound for the second moment:
 \begin{equation}\label{1eq: Luo's corollary}
 	\sumx_{f\in H_k} \left| L(1/2, \As(f)) \right|^2 \Lt_\upvarepsilon k^{2+\upvarepsilon } ,
 \end{equation}
for any $\vepsilon > 0$, where the $\star$  means that the summation is restricted to cuspidal Asai lifts $\As(f)$. 
Luo's result is based on his large sieve inequality
\begin{equation}\label{1eq: Luo's result}
	\sum_{f\in H_k } \omega_f   \bigg| \sum_{N (\mu) \leqslant  X} b_{\mu} \lambda_f (\mu )\bigg|^2 \Lt_\upvarepsilon   \big( k^2 + X  \big)  (kX   )^\upvarepsilon \sum_{ N (\mu) \leqslant  X } | b_{\mu} |^2,
\end{equation}
where %$\omega_f$ are the harmonic weights,  
$\boldsymbol{b} : \SO^{+} / \RU^2 \ra \BC$ is an arbitrary complex  sequence %, the $h$ means that the $f$-sum is harmonic weighted by $\omega_f$,  
and it is understood that the $\mu$-sum is over $ \SO^{+} / \RU^2 $.  This should really be  viewed as the large sieve associated to the family of standard $L$-functions $ L (s, f) $. For any given $\vnu \in \SO^+$, if \eqref{1eq: Luo's result} is specialized to those $\mu = \vnu n$ ($n \in \BZ_{+}$) (in other words, if $\boldsymbol{b}$ is supported on $ \vnu \BZ_+$),  then 
\begin{equation}\label{1eq: Luo LS, Asai}
	\sum_{f\in H_k } \omega_f \bigg| \sum_{n \leqslant Y } a_{n}  \lambda_f ( \vnu n )\bigg|^2 \Lt_\upvarepsilon   \big( k^2 +  Y^2 N (\vnu)   \big)  (  k  Y  N (\vnu) )^\upvarepsilon \sum_{ n \leqslant  Y } |a_{n} |^2,
\end{equation}
where $\boldsymbol{a} : \BZ_+ \ra \BC$ is an arbitrary complex  sequence. For $\vnu = 1$, we consider \eqref{1eq: Luo LS, Asai} as a benchmark large sieve inequality associated to $ L (s, \As(f)) $. Thus \eqref{1eq: Luo's corollary} follows from \eqref{1eq: Luo LS, Asai} via a standard argument by approximate functional equation with $Y = k^{1+\vepsilon}$ (see \cite{Luo-Asai}).

\begin{remark}\label{rem: QxQ}
	Note that if $\bfF / \BQ$ were replaced by the split quadratic algebra $\BQ \times \BQ$, then the analogue of   $L (1/2,  \mathrm{As}(f) )$ would be the Rankin--Selberg   $L (s, h \times h)$, for $ h \in S_{k}  (\mathrm{SL}_2 (\BZ))$. It is also known that $  L (1/2,  \mathrm{As}(f) ) $ may be expressed in terms of $ L (1/2, \mathrm{Sym}^2 h  ) $ if $f$ is the base change of $h$ {\rm(}see \cite[\S 5]{Asai-1977}{\rm)}.

	From the analytic perspective, both $L (1/2,  \mathrm{As}(f) )$ and $L (1/2, h \times h)$ or $L (1/2, \allowbreak \mathrm{Sym}^2 h  )$ are examples of $L$-values of `conductor drop',  but the latter has attracted more attention due to its connection with quantum unique ergodicity {\rm(}QUE{\rm)}. 
	
	It should be stressed here the difference in dimensions  {\rm(}see \cite{Shimizu,Siegel-Vol}\footnote{There is somehow a factor $2$ missing in the volume formula for $\mathrm{Vol} (\SL_2 (\SO) \backslash \mathbf{H}^2)$ in \cite[\S 1.11]{Garrett-Hilbert} compared to that of Siegel \cite[(19)]{Siegel-Vol}.}{\rm)}:  \begin{align*}
		   \dim S_k (\mathrm{SL}_2 (\BZ)) &  \sim \frac {\mathrm{Vol} (\SL_2 (\BZ) \backslash \mathbf{H} )} {4 \pi }   k = \frac k  {12} ,  \\
		     \dim  S_k (\mathrm{SL}_2 (\SO))  & \sim \frac {\mathrm{Vol} (\SL_2 (\SO) \backslash \mathbf{H}^2)} {(4\pi)^2}   k^2 = \frac {\zeta_{\bfF} (2) d_{\bfF}^{3/2}}  {8 \pi^4} k^2 ,
	\end{align*}
where $d_{\bfF}$ is the discriminant of $\bfF$. 
	
The reader is referred to \cite{Young-Sym2-LS} for the large sieve inequality for $ \mathrm{Sym}^2 h $  and to \cite{Khan-Young-Sym2} for the best known result for the mean square of $ L (1/2, \mathrm{Sym}^2 h)$. 
\end{remark}

\subsection{Main Results} 

Let $\phi $ be a fixed Hecke--Maass cusp form  in $S_{\frac 1 4 + \vvkappa^2 }  (\mathrm{SL}_2 (\BZ))$. In this paper, we consider the second moment of convoluted $L$-functions $ L (s, \mathrm{As}(f) \times \phi)$ at the center $s = 1/2$.   

\begin{theorem}\label{thm: 2nd moment}
	We have
	\begin{equation}\label{1eq: 2nd moment bound}
		\sumx_{f\in H_k}  | L  (1/2, \As(f)\times  \phi  )  |^2 \Lt_{\vepsilon, \phi}  k^{7/2+\upvarepsilon} ,
	\end{equation}
 where the $\star$ indicates the restriction to cuspidal Asai lifts $\As(f)$.
\end{theorem}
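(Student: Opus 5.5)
The plan is to derive \eqref{1eq: 2nd moment bound} from a large sieve inequality for the Asai coefficients $\lambda_f(n)$, $n\in\BZ_+$, that is sharper than \eqref{1eq: Luo LS, Asai} in the range needed here. The $L$-function $L(s,\As(f)\times\phi)$ has degree $8$. Its analytic conductor is $\asymp_\phi k^4$, because the Asai lift has archimedean parameters of size about $k$ with multiplicity $4$ (the conductor drop). So the approximate functional equation expresses $L(1/2,\As(f)\times\phi)$ as a sum of length about $k^{2+\vepsilon}$.

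First, I would write out the Dirichlet series. It is a convolution of $\lambda_f(n)$, $\lambda_\phi$-coefficients and $\zeta$-type factors; by the standard Rankin--Selberg unfolding it has the form
\begin{equation*}
\sum_{m,n} \frac{\lambda_f(n)\lambda_\phi(n)\, c(m)}{(m^2 n)^{s}},
\end{equation*}
with $c(m)$ built from $\lambda_\phi(m^2)$ and the like. After a dyadic partition and Cauchy--Schwarz over the $m$-variable, and after separating variables in the weight function (via Mellin inversion, with the $f$-dependence of the gamma factors uniform in the family since all forms have weight $(k,k)$), one gets
\begin{equation*}
\sumx_{f}|L(1/2,\As(f)\times\phi)|^2 \Lt k^{\vepsilon}\max_{Y\le k^{2+\vepsilon}} \frac1Y\sum_{f}\omega_f^{-1}\cdot\omega_f\Big|\sum_{n\sim Y}a_n\lambda_f(n)\Big|^2,
\end{equation*}
where $a_n=\lambda_\phi(n)\times$(smooth weight). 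Since $\omega_f^{-1}\Lt k^{2+\vepsilon}$ (the harmonic weight is about $1/\dim$), it remains to bound the harmonic large sieve.

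Second, if one inserts \eqref{1eq: Luo LS, Asai} with $Y=k^2$, the bound is $(k^2+Y^2)$ applied to $\sum|a_n|^2\asymp Y$, which gives $k^2\cdot k^4/Y\cdot\ldots = k^{6}$. That is too weak, and this is the main obstacle. So the key step is a new Asai large sieve of the shape
\begin{equation*}
\sum_{f}\omega_f\Big|\sum_{n\le Y}a_n\lambda_f(n)\Big|^2\Lt (kY)^{\vepsilon}\big(k^{2}+ Y^{3/2}\big)\cdot k^{-2}\cdot(\ldots)\sum|a_n|^2,
\end{equation*}
tuned so that at $Y=k^2$ the total becomes $k^{7/2}$. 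Concretely, I aim for the off-diagonal contribution to be $Y^{3/2}$ in place of Luo's $Y^2$: with $\omega_f$-normalisation, the diagonal is $\sum|a_n|^2$, and an off-diagonal bound of $k^{-1/2}Y^{\ldots}$ giving $k^{3/2}$ saving over trivial would suffice.

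Third, to prove this I would apply the Petersson formula over $\bfF$. The diagonal $m=n$ survives because units act trivially on rational integers. The off-diagonal is a sum over $c\in\SO$ of Kloosterman sums $S(m,n;c)$ with rational $m,n$, weighted by a product of two $J_{k-1}$ Bessel functions at $4\pi\sqrt{mn}/|c|$ and $4\pi\sqrt{mn}/|c'|$. The transition range of $J_{k-1}$ forces $|c|,|c'|\Lt Y/k$. Instead of bounding trivially as Luo does, I would exploit the rationality of $m,n$. Open the Kloosterman sum, and apply Poisson summation in $m$ and $n$ over $\BZ$ modulo $N(c)$, using the Bessel integral representation, so that the dual sums have length $N(c)k/Y$, and short ones. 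The key arithmetic input is that the Kloosterman sum over $\SO/c$ restricted to rational arguments collapses after Poisson to character sums of size about $N(c)^{1/2}\cdot$ gcd, thereby saving $\sqrt{N(c)}$. The hard step is controlling the double Bessel oscillatory integral uniformly when $c$ is nearly rational versus genuinely irrational. I would first try stationary phase in the two archimedean places separately, and accept the loss that produces the exponent $7/2$ rather than $3$.
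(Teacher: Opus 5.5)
There is a genuine gap: the new large sieve carries the entire $k^{7/2}$ saving, and the mechanism you propose for proving it cannot work. A large sieve inequality has to hold for arbitrary coefficients $\boldsymbol a$, so after the Petersson formula there is no Poisson summation in $m$ and $n$ available. If you instead specialise to $a_n=\lambda_\phi(n)w(n)$, you need Voronoi summation for $\phi$, not Poisson; you no longer have a large sieve, and the dual problem is not analysed. Two further steps are assertions without a mechanism: the claimed ``collapse'' of $S_{\mathbf F}$ at rational arguments with a saving of $\sqrt{N(c)}$, and the step where you ``accept the loss that produces $7/2$''. The Bessel product $J_{k-1}(x)J_{k-1}(y)$, with $x=4\pi\nu\sqrt{mn}/c$ and $y=4\pi\nu'\sqrt{mn}/c'$, is the hard step you flag, and it is left open.

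The missing idea in the paper is to average over the weight. Sum over $K<k\le K+H$ with a smooth weight $h$. Macdonald's formula together with the Bessel-sum formula gives $\sum h(\kappa)J_\kappa(x)J_\kappa(y)=\frac1{4\pi}\int\hat h(t)\{J_0(\omega(t;x,y))-J_0(\omega(t;x,-y))\}\,dt$, with $\omega=\sqrt{x^2+y^2-2xy\cos t}$ and effectively $|t|\ll H^{\epsilon}/H$. Stationary phase then shows this is negligible unless $x,y\gg HK/H^{\epsilon}$ when $x/y>3$, or $x,y\gg\varDelta K/H^{\epsilon}$ when $x/y-1\asymp\varDelta/H$. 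This shortens the $c$-sum and puts both Bessel functions beyond the transition range. One then splits $c$ according to $\nu'c/\nu c'$ and counts such $c$ in narrow sectors; in the nearly-diagonal range one keeps the integral representation. Next, $\sqrt{mn}$ is separated by Mellin inversion, and a Gallagher-type hybrid large sieve is applied. In that sieve the rationality of $n$ enters as $Yr(c)$ in place of $Y^2$, because $m_1\equiv m_2 \bmod c$ forces $m_1\equiv m_2 \bmod N(c)/r(c)$. This is where rationality is actually exploited, and it works for arbitrary $\boldsymbol a$. Positivity with $H=\min\{\sqrt K,\sqrt{Y/K}\}$ then gives Theorem~\ref{thm: large sieve}. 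With a single $k$, as in your plan, the method only recovers Luo's bound.

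Your reduction step also contains two errors.

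\emph{The Dirichlet series.} It is not of the shape $\sum\lambda_f(n)\lambda_\phi(n)c(m)(m^2n)^{-s}$ with $c(m)$ independent of $f$. Its coefficients are $\lambda_f(\mu^2 n\sqrt D)\lambda_\phi(n)/\lambda_f(\sqrt D)$, with $n\in\BZ_+$ and $\mu\in\SO^+/\RU^2$, $r(\mu)=1$. So they involve $\lambda_f$ at non-rational arguments: at a split $p\sim\pi_1\pi_2$ the local factor contains $\lambda_f(p^j\pi_1^{2v})$. After Cauchy over $\mu$ you need a large sieve for $\lambda_f(\nu n)$ with $\nu=\mu^2$. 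This is why Theorem~\ref{thm: large sieve} is stated for general $\nu\in\SO^+$ with explicit $N(\nu)$-dependence. Alternatively, you could split off the part of $n$ supported on primes dividing $N(\mu)$ and use Deligne's bound.

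\emph{The numerology.} Luo's inequality at $Y\approx k^2$ gives $k^{4+\epsilon}$, which is convexity strength, not $k^6$. Its diagonal is already $k^2\|\boldsymbol a\|^2$ in the normalisation $\omega_f\asymp 1$, so inserting $\omega_f^{-1}\ll k^{2+\epsilon}$ on top counts $k^2$ twice. The saving actually needed is $\sqrt k$ in the off-diagonal at $Y\approx k^2$, namely $\max\{\sqrt{kY},Y/\sqrt k\}(YN(\nu))^{1+\epsilon}$ versus $k^2+Y^2N(\nu)$. Your displayed target, with an unspecified factor and off-diagonal $Y^{3/2}$, is not a well-defined goal; taken literally it would give $k^3$.
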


\begin{remark}
	Note that  in order to apply the converse theorem of Cogdell and Piatetski-Shapiro \cite{C-PS-Converse-GL(4)},  it has been verified in \cite{Ramakrishnan2002}  the `niceness'  of  $ L (s, \mathrm{As}(f) \times \phi)$, in particular, its analytic continuation and functional equation. 
\end{remark}

\begin{remark}\label{rem: Asai and RS}
	By Remark \ref{rem: QxQ}, one may regard $ L (s, \mathrm{As}(f) \times \phi) $ as a sibling of the triple product $L$-function  $ L (s, h \times h \times \phi) $ or the convoluted $L$-function $ L (s, \mathrm{Sym}^2 h \times \phi) $, but for the latter, to the authors' knowledge,  there is currently no result  in the literature like Theorem \ref{thm: 2nd moment}, but some related ones may be found for example in \cite{BKY-Mass,Khan-Young-Sym2}. For comparison, applying the holomorphic analogue of Young's large sieve for $ \mathrm{Sym}^2 h $  (\cite{Young-Sym2-LS}) %, with $h $ traversing a Hecke basis of $  S_{k}  (\mathrm{SL}_2 (\BZ))$, 
	only yields the trivial bound $O_{\vepsilon, \phi} (k^{3+\vepsilon})$ for the second moment of $ L (1/2, \mathrm{Sym}^2 h \times \phi) $.  
\end{remark}

\begin{remark}
	 It is tempting to use the Petersson--Vorono\"i approach to improve the bound \eqref{1eq: 2nd moment bound}, but  in the literature  there is   no Vorono\"i  summation formula for $\SL_2 (\BZ) $ that involves additive characters over $\mathbf{F}$. 
\end{remark}

The benchmark large sieve in \eqref{1eq: Luo LS, Asai}, applied with  $Y $ up to $ k^{2+\vepsilon}/N(\vnu)$,  would only yield  $ O_{\vepsilon, \phi} (k^{4+\vepsilon})$, which is trivial by the convexity bound $L  (1/2, \As(f)\times  \phi  ) = O_{\vepsilon, \phi}  (k^{1+\vepsilon})$.   Note that \eqref{1eq: 2nd moment bound} is just the  Burgess subconvexity bound on average.

The proof of \eqref{1eq: 2nd moment bound} will rely on the large sieve inequality in the next theorem. 

 \begin{theorem}\label{thm: large sieve}
 	Let $\boldsymbol{a} : \BZ_+ \ra \BC$ {\rm(}$\boldsymbol{a} = \{ a_{n} \} ${\rm)} be an arbitrary complex sequence. Let  $\vnu \in \SO^+$.  Then  for $Y > k \geqslant 2$ {\rm(}$k$ even{\rm)}, we have
 	\begin{equation}\label{1eq: main LS}
 		\sum_{f\in H_k} \omega_f   \bigg| \sum_{n\leqslant Y} a_n \lambda_f(\vnu n ) \bigg|^2 \Lt_\upvarepsilon \max \left\{ \sqrt{kY}, \frac {Y} {\sqrt{k}} \right\}  ( Y N(\vnu) )^{1+\upvarepsilon} \sum_{n\leqslant Y} |a_n|^2 ,
 	\end{equation}
 where the implied constant depends on $\vepsilon $ only. 
 \end{theorem}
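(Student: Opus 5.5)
By duality, \eqref{1eq: main LS} follows from the dual inequality
\[
\sum_{n\leqslant Y}\bigg|\sum_{f\in H_k}\omega_f c_f\lambda_f(\vnu n)\bigg|^2 \Lt_\upvarepsilon \Delta \sum_{f} \omega_f |c_f|^2 ,
\qquad
\Delta = \max\Big\{\sqrt{kY}, \frac{Y}{\sqrt k}\Big\}(YN(\vnu))^{1+\upvarepsilon}.
\]
It is more natural, however, to open the square directly. We insert a smooth weight $V(n/Y)$ and expand, which reduces the problem to bounding the bilinear form
\[
\sum_{m,n\leqslant Y} a_m\bar a_n \sum_f \omega_f \lambda_f(\vnu m)\lambda_f(\vnu n).
\]
To this we apply the Petersson trace formula for Hilbert modular forms of parallel weight $(k,k)$ over $\SO$, in the normalization used by Luo.

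The diagonal term $\vnu m = \vnu n$ (up to $\RU^2$, which forces $m=n$) contributes $O(\sum|a_n|^2)$. This is well within the bound. The off-diagonal term is
\[
\sum_{c\in \SO/\RU,\ c\neq 0}\frac{KS(\vnu m,\vnu n;c)}{N(c)}\,J_{k-1}\!\Big(\frac{4\pi\sqrt{\vnu m\vnu n}}{|c|}\Big)J_{k-1}\!\Big(\frac{4\pi\sqrt{\vnu' m\vnu' n}}{|c'|}\Big),
\]
with the Bessel factors taken at the two real embeddings. Since $m,n\in\BZ$, the arguments are $4\pi\sqrt{mn}\,|\vnu|/|c|$ and $4\pi\sqrt{mn}\,|\vnu'|/|c'|$. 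The transition region $J_{k-1}(x)$ with $x\asymp k$ requires both $|c|$ and $|c'|$ to be $\Lt YN(\vnu)^{1/2}$-ish divided by $k$. When $Y<k$ the sum is negligible because both Bessel functions are tiny, which is why we assume $Y>k$.

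The key step is the treatment of the $c$-sum. We use the bound $J_{k-1}(x)\Lt \min\{(x/k)^{k-1},\, k^{-1/3},\, x^{-1/2}\}$ in each embedding separately. The point is that $c$ ranges over a lattice in $\BR^2$, and the constraints $|c|,|c'|\Lt \sqrt{mn}\,|\vnu|/k$ cut out a box with $O(Y^2N(\vnu)/k^2)$ lattice points modulo units. Rather than estimating Kloosterman sums pointwise, which with Weil's bound $N(c)^{1/2+\upvarepsilon}$ is too lossy, we would estimate the $m,n$ sums with the Bessel weights via Cauchy--Schwarz and Poisson/large-sieve over $\BZ$. Concretely, we open $KS(\vnu m,\vnu n;c)=\sum_{x\bmod c}e(\Tr((\vnu m x+\vnu n\bar x)/c))$ and note that $m\mapsto e(\Tr(\vnu m x/c))$ is an additive character of $\BZ$ of frequency $\Tr(\vnu x/c)$. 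The multiplicative large sieve over these rational frequencies then bounds
\[
\sum_x\Big|\sum_m a_m e(m\theta_x)\Big|^2 \Lt (Y+\delta^{-1})\|a\|^2 ,
\]
with the Bessel asymptotic $J_{k-1}(x)\approx x^{-1/2}e^{\pm ix}$ separated off by Mellin or stationary phase so that the oscillation in $\sqrt{mn}$ does not obstruct the argument.

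Balancing the two regimes produces the bound. In the transition range $x\asymp k$, the Bessel size $k^{-1/3}$ and the $c$-count give the $\sqrt{kY}$ term. In the range $x\gg k$, the decay $x^{-1/2}$ in each embedding yields $Y/\sqrt k$. The factor $(YN(\vnu))^{1+\upvarepsilon}$ comes from the spacing of the frequencies $\Tr(\vnu x/c)$, which have denominators dividing $N(c)$, together with the divisor bounds.

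The main obstacle I expect is exactly this frequency-spacing and large-sieve step. The points $\Tr(\vnu x/c)$, as $x$ runs mod $c$ and $c$ runs over the box, are not well spaced, because distinct $(x,c)$ can give the same rational. I would first try grouping them by their reduced denominator $q\mid N(c)$ and counting multiplicities, using that $c$ is determined up to bounded ambiguity by $q$ and the box. If this counting loses too much, the fallback is to use Weil's bound only in the transition range.
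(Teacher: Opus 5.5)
Your sketch never produces a saving over Luo's bound \eqref{1eq: Luo LS, Asai}, and the one concrete accounting you give does not work.

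Take your transition-range heuristic literally: Bessel size $k^{-1/3}$ in each embedding, $\asymp C\asymp N(\vnu)Y^2/k^2$ moduli, and the Weil bound. For $\vnu=1$ this gives roughly $k^2\cdot Y\cdot C^{1/2}k^{-2/3}\asymp k^{1/3}Y^2$, which is worse than Luo's $Y^2$. Here the factor $(k-1)^2$ from the Petersson formula \eqref{2eq: Petersson} has been restored; you dropped it, and the diagonal is $\asymp k^2\sum|a_n|^2$, not $O(\sum|a_n|^2)$. Moreover, once you bound the Bessel factors in absolute value you cannot run any large sieve in $m,n$, because the weight depends on $\sqrt{mn}$.

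In the paper, the term $\sqrt{kY}$ does not come from any single-weight regime. It comes from optimizing $HK+Y/H$ in the weight-averaged inequality of Theorem \ref{thm: large sieve, average} with $H=\min\{\sqrt K,\sqrt{Y/K}\}$, and then discarding all weights but one by positivity. Your remark about $x\gg k$ is closer to what happens: it matches the $S^{++}$ estimate in \S\ref{sec: Estimate for P_Delta}, which produces the $Y/\sqrt K$ term. But that estimate needs two things your sketch does not address. Both Bessel arguments must be known to lie beyond the transition range. And the near-diagonal moduli must be handled separately; these are where $\theta=\vnu' c/\vnu c'$ is close to $1$ and the phase $\gamma_\kappa(x)-\gamma_\kappa(\theta x)$ oscillates only at frequency $\asymp(1-\theta)x$.

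The missing idea is the average over the weight. Summing $h(\kappa)J_\kappa(x)J_\kappa(y)$ over $K<k\leqslant K+H$ and applying Macdonald's formula gives the representation of Lemma \ref{lem: representation of B} through $J_0(\omega(t;x,\pm y))$, with effectively $|t|\leqslant H^{\varepsilon}/H$. Stationary phase (Lemma \ref{lem: B(x, y), ranges}) then shows the averaged product is negligible unless $x,y\gg HK/H^{\varepsilon}$ (when $x/y>3$) or $x,y\gg \varDelta K/H^{\varepsilon}$ (when $x/y-1\sim\varDelta/H$). This does two things at once. It removes the transition range, so Olver's asymptotic is available in the simple form of Lemma \ref{lem: Olver}. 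It also shortens the $c$-sum by a factor $H^2$ or $\varDelta^2$. The moduli with $|1-\theta|\ll H^{\varepsilon}/H$ are few by Lemma \ref{lem:lattice_count}, and they are handled directly through the $J_0$-representation.

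The paper notes that the same machinery at a single weight ($H=1$) only gives back Luo's bound. A fixed-weight proof would therefore need a new treatment of $J_\kappa(x)J_\kappa(\theta x)$ through $x\asymp\kappa$ and for $\theta$ near $1$, and your proposal does not supply one.

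Finally, the step you flag as the main obstacle is the easy one. Because the Bessel weight depends on $c$, the large sieve is applied one modulus at a time. For rational $m,n$, the congruence $m\equiv n\ (\mathrm{mod}\ c)$ forces $m\equiv n\ (\mathrm{mod}\ N(c)/r(c))$, which improves the $Y^2$ term to $Y r(c)$. This is used in the hybrid form of Lemma \ref{C lem: Gallagher thm 2}, which is needed because after Mellin separation one integrates over the Mellin variable. The $c$-sum is then controlled by lattice-point counting, not by merging all frequencies $\Tr(\vnu x/c)$ into one well-spaced set.
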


Our \eqref{1eq: main LS} is stronger than Luo's \eqref{1eq: Luo LS, Asai} for all $ Y > k  $, and its proof will be entirely different from his, in particular, it requires an average over all the weights $K < k \leqslant K + H$.  

 \begin{theorem}\label{thm: large sieve, average}
 Let $\boldsymbol{a} : \BZ_+ \ra \BC $ and $\vnu \in \SO^+ $ be as above in Theorem \ref{thm: large sieve}.  Then for $ 1 \leqslant H \leqslant K  < Y 
 $ we have
 \begin{equation}\label{eq: large sieve, averaged}
 	\begin{split}
 		\sum_{ K<k\leqslant K+H}  & \sum_{f\in H_k} \!  \omega_f  \bigg| \sum_{n\leqslant Y} a_n \lambda_f(\vnu n ) \bigg|^2 \! \!  \Lt_{\vepsilon} \! \!  \bigg(  H K  +  \frac {Y } {H }  +  \frac{Y}{\sqrt{  K}}  \bigg)   (Y N(\vnu))^{1+\upvarepsilon} \! \! \sum_{n\leqslant Y} |a_n|^2,
 	\end{split}
 \end{equation}
where it is understood that the $k$-sum is over $2 \BZ_+$.  
\end{theorem}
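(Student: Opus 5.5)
By duality it suffices to bound the quadratic form. Expanding the square, the left-hand side of \eqref{eq: large sieve, averaged} equals
\[
\sum_{m,n\leqslant Y} a_m \overline{a_n} \sum_{K<k\leqslant K+H} \sum_{f\in H_k}\omega_f \lambda_f(\vnu m)\overline{\lambda_f(\vnu n)} ,
\]
and we evaluate the inner $f$-sum by the Petersson trace formula for Hilbert modular forms over $\bfF$ (with the narrow class number one normalization). This produces a diagonal term $\delta(\vnu m, \vnu n)$ modulo $\RU^2$, i.e.\ $m=n$. Summed over $\asymp H$ even weights, it contributes $\Lt H \sum |a_n|^2$, which is well within the $HK$ term.

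The off-diagonal part is a sum over nonzero $c\in\SO$ (modulo units) of Kloosterman sums $S(\vnu m,\vnu n;c)/N(c)$ times a product of two Bessel functions $J_{k-1}(4\pi\sqrt{\vnu m\vnu n}/|c|)\,J_{k-1}(4\pi\sqrt{\vnu' m\vnu' n}/|c'|)$, one for each real embedding. The key device is to sum over $k$ first. We insert a smooth weight $h(k)$ supported on $[K,K+H]$ (positivity lets us majorize the sharp cutoff) and use the standard averaging formula for $\sum_k i^k h(k) J_{k-1}(x)J_{k-1}(y)$. Here I would follow the Iwaniec--Sarnak/Young style analysis: expand each $J_{k-1}$ via its integral representation $J_{k-1}(x)=\int e^{i x\sin\theta - i(k-1)\theta}\,d\theta/2\pi$ and sum in $k$ by Poisson. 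This localizes the combined phase, giving negligibility unless the relevant arguments $x_1,x_2 = 4\pi\sqrt{N\text{-parts}}/|c|$ are $\Gt K$, together with a saving of size $H$ from the $k$-average in the transition range. Concretely, the resulting weight should be small unless $|c|,|c'|\Lt Y N(\vnu)^{1/2}/K$ (up to the Galois conjugates), and in the oscillatory range it should be of size $\Lt H (x_1x_2)^{-1/2}$ with an extra oscillation $e(\pm x_1 \pm x_2)$.

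Next, the $c$-sum is bounded by Weil's bound $|S(\vnu m,\vnu n;c)|\Lt N(c)^{1/2+\vepsilon}\,N((\vnu m,\vnu n,c))^{1/2}$, and the bilinear form in $m,n$ by Cauchy--Schwarz or Schur. Two off-diagonal ranges arise. The first is where the Bessel transform is essentially $K^{-1/3}$-sized near the transition $x\approx k$. There the $k$-average yields a length-$H$ saving in the oscillation, giving the $Y/H$ term: the count of $c$ with $N(c)\Lt Y^2N(\vnu)/K^2$ times $N(c)^{-1/2}$ times the pair count, normalized against $H$. The second is the genuinely oscillatory range, where the Weil bound and the $(x_1x_2)^{-1/2}$ decay give the $Y/\sqrt K$ term. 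The divisor-type losses from gcds and from counting $\vnu m$ with $m\leqslant Y$ account for the factor $(YN(\vnu))^{1+\vepsilon}$. Since the bound is crude in $N(\vnu)$, I would simply bound $N(\vnu m)\leqslant Y^2N(\vnu)$ throughout.

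The main obstacle is the second step: obtaining a uniform asymptotic for the $k$-averaged product of two Bessel functions, with one factor for each embedding. The summation variable $k$ appears in both $J_{k-1}$ simultaneously, so the Poisson step produces a two-dimensional stationary phase in $(\theta_1,\theta_2)$ with constraint $\theta_1+\theta_2\equiv$ const (mod $2\pi/$ step). I would first try to handle this by writing $i^{2k}J_{k-1}(x_1)J_{k-1}(x_2)$ through the Neumann addition/product formula $J_{\nu}(x_1)J_\nu(x_2)=\frac{1}{\pi}\int_0^{\pi/2} J_{2\nu}(2\sqrt{x_1x_2}\cos\theta)\cdots$ (or its Fourier integral version), reducing to a single-Bessel $k$-average. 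The bound $HK+Y/H+Y/\sqrt K$ is then what the two ranges produce.
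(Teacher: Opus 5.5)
\textbf{There is a genuine gap: after the first step, the argument you describe cannot reach the stated bound.} Your first step (Petersson formula, smooth average over $k$, reducing the Bessel product to a single-Bessel average) matches the paper's starting point. The problem is what you extract from the $k$-average. The two properties you list are negligibility unless $x,y\gg K$, and size $\ll H(xy)^{-1/2}$ beyond the transition. Both already hold for each $k$ separately, so they contain no saving from averaging.

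Feed them into the Weil bound and a trivial (Cauchy or Schur) bound for the $(m,n)$-form. Each $c$ with $N(c)\ll N(\vnu)Y^2/K^2$ then contributes $\ll H(Y\sqrt{N(\vnu)})^{-1}\sum_{m,n}|a_ma_n|$. After restoring the factor $K^2$ from the Petersson normalization, the off-diagonal part is $\ll HY^2\sqrt{N(\vnu)}\sum|a_n|^2$. (Your diagonal is likewise really $HK^2\sum|a_n|^2$; this is harmless since $K<Y$.) That is just Luo's inequality summed over $H$ weights. For $H=\sqrt K$ and $Y=K^2$ it gives $K^{9/2}$, against the target $K^{7/2}$. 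So your attribution of $Y/H$ to a $K^{-1/3}$-sized transition range, and of $Y/\sqrt K$ to Weil plus $(x_1x_2)^{-1/2}$ decay, is not supported. In the paper, after averaging, the transition range is actually negligible except for $c$ in a thin sector.

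\textbf{Missing ingredient (i): localization depending on $y/x$.} The real gain from the $k$-average depends on the ratio $y/x=\vnu'c/(\vnu c')$. Write $\sum_{\vkappa}h(\vkappa)J_{\vkappa}(x)J_{\vkappa}(y)=\frac{1}{4\pi}\int\widehat h(t)\big(J_0(\omega(t;x,y))-J_0(\omega(t;x,-y))\big)\,dt$, with $\omega=\sqrt{x^2+y^2-2xy\cos t}$. This follows from Macdonald's formula or Neumann's addition theorem; the $J_{2\nu}$ product formula you have in mind is the equal-argument one. Truncate to $|t|\le H^{\epsilon}/H$; the phase $Kt+\omega(t;x,y)$ then has derivative $K+O(H^{\epsilon}xy/(H|x-y|))$. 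Hence the average is negligible unless $x,y\gg HK/H^{\epsilon}$ when $x/y>3$, and unless $x,y\gg\varDelta K/H^{\epsilon}$ when $x/y-1\sim\varDelta/H$.

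In your Poisson-in-$k$ language the same thing appears as follows. Beyond the transition, the $\vkappa$-derivative of the Olver phase $\gamma_{\vkappa}(x)\pm\gamma_{\vkappa}(y)$ is $-\mathrm{arcsec}(x/\vkappa)\mp\mathrm{arcsec}(y/\vkappa)$. A sum over $H$ consecutive odd $\vkappa$ is negligible unless this lies within $H^{\epsilon}/H$ of $\pi\mathbf{Z}$, which gives the same thresholds.

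This localization forces you to split the $c$-sum by the direction of $c\in\mathbf{R}^2$. Away from the ray the $c$-range shrinks by $H^2$, and trivial estimation then produces the $Y/H$ term.

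\textbf{Missing ingredient (ii): cancellation in $m,n$ near the ray.} For $|1-\theta|\ll H^{\epsilon}/H$ there is no localization at all, and for $1-\theta\sim\varDelta/H$ only the weaker one. There the cross term $e^{i(\gamma_{\vkappa}(x)-\gamma_{\vkappa}(y))}$ does not oscillate enough in $\vkappa$ to give cancellation. The lattice count $\ll C\varDelta/H+\sqrt C$ for such $c$ is needed, but even with it, Weil plus trivial summation still loses about a factor $H$. You need actual cancellation in $m,n$.

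The paper gets this as follows:
\begin{itemize}
\item It opens the Kloosterman sum and separates $\sqrt{mn}$ by Mellin inversion.
\item It applies a Gallagher-type hybrid large sieve modulo $c$, in which $Y^2N(\vnu)$ improves to $Y\,r(c_{\vnu})N((\vnu,c))$ because $n$ is rational.
\item In the $\varDelta$-ranges it uses Olver's asymptotic, which is legitimate since $x,y>K^{1+\epsilon}$ by (i).
\item It then uses the second-derivative saving $X^{-1/2}$ in the Mellin transform of $e^{i\gamma_{\vkappa}(x)+i\gamma_{\vkappa}(\theta x)}$; this is where $Y/\sqrt K$ comes from.
\end{itemize}
Without (i) and (ii) the argument stays at the level of Luo's bound.
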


Now \eqref{1eq: main LS} follows from \eqref{eq: large sieve, averaged} if we choose $H = \min \big\{ \sqrt{K}, \sqrt{Y/K} \big\}$ and ignore all but one term in the $k$-sum by non-negativity. However, if we choose $H = 1$ so that there is only one $k$ left, then we arrive instead at \eqref{1eq: Luo LS, Asai}, so the average over $k$ is quite essential. 

\begin{remark}
	In view of Proposition {\rm\ref{prop: reduction}}, we may improve the bound   {\rm\eqref{eq: large sieve, averaged}} in the $N(\vnu)$-aspect, but this is not beneficial in our application. 
\end{remark}
 
\begin{remark}
	We may also prove the short-interval variant of {\rm\eqref{1eq: 2nd moment bound}} by a direct application of Theorem \ref{thm: large sieve, average}{\rm:}
		\begin{equation}\label{1eq: 2nd moment bound, averaged}
	\sum_{ K<k\leqslant K+\sqrt{K}}  \ 	\sumx_{f\in H_k}  | L  (1/2, \As(f)\times  \phi  )  |^2 \Lt_{\vepsilon, \phi}  K^{7/2+\upvarepsilon} . 
	\end{equation}
\end{remark}

\subsection{Remarks on the Proof of Theorem \ref{thm: large sieve, average}} 

The difficulty of the problem lies in the analysis of  Bessel products $J_{k-1} (x) J_{k-1} (y)$ for 
\begin{align*}  
	x = \frac{4\pi \vnu \sqrt{  m n} }{ c  }, \qquad y = \frac{4\pi \vnu' \sqrt{  m n} }{ c'  },  
\end{align*}
arsing from the application of the Petersson formula for $\SL_2 (\SO)$ (here $c'$ is the conjugate of $c$ in $\mathbf{F}$). Olver's uniform asymptotic formula is a useful tool, but its behavior varies across different ranges, not to speak of the  simultaneous variation of  $x$ and $y$.  

Our analysis will rely on an integral representation of the smoothed variant of 
\begin{align*}
	\sum_{ K<k\leqslant K+H} J_{k-1} (x) J_{k-1} (y), 
\end{align*}
deduced from the Macdonald integral formula. By careful stationary phase analysis, it turns out that if $x$ and $y$ are not too close to each other, the sum above is negligibly small unless $x , y > K^{1+\vepsilon}$. Thus $x$ and $y$ are both beyond the transition range of Olver's asymptotic formula! As for the case that $x$ and $y$ are very close, the integral representation will still be useful in our analysis.

\subsection*{Notation}   
By $F \Lt G$ or $F = O (G)$ we mean that $|F| \leqslant c G$  for some constant $c  > 0$, and by $F \asymp G$ we mean that $F \Lt G$ and $G \Lt F$. We write $F \Lt_{\valpha, \beta, ...} G $ or $  F = O_{\valpha, \beta, ...} (G) $ if the implied constant $c$ depends on $\valpha$, $\beta$, ....  

The notation $x \sim X$ stands for  $ X <  x \leqslant 2 X $.

By `negligibly small' we mean $ O_A ( K^{-A} )$ or $O_A (k^{-A})$ ($  O_A (\vkappa^{-A})$ as we shall set $\vkappa = k-1$) for arbitrarily large but fixed $A > 0$. 

Throughout the paper,  $\upvarepsilon $ is arbitrarily small and its value may differ from one occurrence to another.

	\section{Preliminaries}
\subsection{Basic Definitions}\label{sec: basic defn}
Fix a real quadratic field $\bfF = \bfQ (\sqrt{D})  $ ($D$ square-free). Assume that the narrow class number $h_{\bfF }^{ +} = 1$.  Let $\SO $, $\SO^+$, and $\RU$ denote the ring of integers, the set of totally positive integers, and  the group of units, respectively. Consider $ \SO $ as a lattice in the plane $\BR^2$ and $\SO^+$ embedded in the first quadrant $\BR_+^2$.  Note that $ \RU = \{ \pm 1 \} \times \langle \epsilon_0 \rangle  $ while $ \RU^2 = \langle \epsilon_0^2 \rangle  $ so that $[\RU : \RU^2] = 4$, where as usual $\epsilon_0 $ is the fundamental unit. 

Let $\xi'$ denote the conjugate of $\xi$ in $\bfF$. Let $N(\xi) = \xi \xi'$ and $  \Tr(\xi) = \xi + \xi'$ be the norm and the trace for $\bfF \subset \BR^2 $($= \bfF_{\infty}$), respectively. 

Let $d_{\bfF}$ be the discriminant of $\bfF$. Then $\sqrt{d_{\bfF}}$   generates the different ideal of $\bfF$. As the narrow class number $h_{\bfF }^{ +} = 1$, according to \cite[\S 26.8]{Hasse-NT},  $d_{\mathbf{F}}$ has only one prime divisor (hence $D$ must be prime), so either $d_{\bfF} = 8$ ($D = 2$) or $d_{\bfF} = D$ is a prime $ \equiv 1 (\mathrm{mod}\, 4)$. 

Write $e (x) = \exp (2\pi i x) $. For $x \in \BR^2$,  it will be convenient to introduce $e_{\bfF}  [x] = e (\Tr ( x / \sqrt{d_{\bfF}})) $, viewed as an additive character on $ \BR^2 $($= \bfF_{\infty}$).

For $\mu \in \SO \smallsetminus \{0\}$, let $r(\mu) \in \BZ_{+}$ denote the largest rational integer dividing $\mu$. 

As usual, for $\vnu, \mu \in  \SO \smallsetminus \{0\}$,  write $ \vnu \sim \mu $ if $ (\vnu) = (\mu)$. For $ \vnu, \mu \in \SO^{+} $, clearly $ \vnu \sim \mu $ if and only if $\vnu / \mu \in \mathrm{U}^2$. 

For  $\vnu, \, \mu \in\SO$ and $c \in \SO \smallsetminus \{0\}$ define the Kloosterman sum
\begin{align}\label{2eq: Kloosterman}
	S_{\bfF}  (\vnu, \mu ; c ) = \sumx_{\valpha (\mathrm{mod}\, c) } e_{\bfF}  \bigg[   \frac {\vnu \valpha + \mu \widebar{\valpha} } {c } \bigg] , 
\end{align}
where the $\star$ indicates the condition $(\valpha,c)= (1)$ and $\widebar{\valpha}$ is given by $\valpha \widebar{\valpha}  \equiv 1 (\mathrm{mod} \, c)$. We have the Weil bound
\begin{align}
	\label{2eq: Weil}
	S_{\bfF}  (\vnu, \mu ; c )   \Lt \tau (c) \sqrt{N((\vnu, \mu, c))} \sqrt{|N (c)|}. 
\end{align}

Let $\pi $ and $p$ denote primes in $ \SO^{+} /\RU^2 $ and $\BZ_+$ respectively. 

	\subsection{Petersson Formula for Hilbert Cusp Forms} 
	
	Next, we briefly review the Petersson trace formula for Hilbert cusp forms on $\mathbf{H}^2$ with respect to $ \SL_2 (\SO)$; as usual $\mathbf{H}$ is the hyperbolic upper half-plane. For more details, the reader is referred to \cite{Luo-2003-Hilbert}, \cite{Luo-Asai}. 

As before, for $k \in 2 \BZ_+$ let $H_{k}$ be an orthonormal basis for $S_k (\SL_2 (\SO))$---the space of Hilbert modular cusp forms of weight $(k,k)$. Every $f (z) \in H_{k}$ has  Fourier expansion of the form:
\begin{equation*}
	f(z) = \sum_{\mu \in \SO^+ } a_f(\mu) e \big( \Tr  ( \mu z / {\textstyle \sqrt{d_{\bfF}} }) \big), \qquad \text{($z \in \mathbf{H}^2$)}.  
\end{equation*}
%here $\Tr (z) = z_1 + z_2 $ for $z = (z_1, z_2)$,  $z_1, z_2 \in \mathbf{H}$. 
It is known that the Fourier coefficients $ a_f (\mu)$ are invariant under $ \mathrm{U}^2$, that is $ a_f (\epsilon^2 \mu) = a_f (\mu) $ for any $\epsilon \in \mathrm{U}$.  Assume that every $f (z)$ is a Hecke eigenform of all the Hecke operators. Let $\lambda_f (\mu)$ ($\mu \in \SO^{+}/ \RU^2 $) be its Hecke eigenvalues so that $ a_f (\mu) = a_f (1) \allowbreak \lambda_f (\mu) N(\mu)^{(k-1)/2}$. It is known that $ \lambda_f (\mu) $ are all real-valued. 

The harmonic weight is defined by
\begin{align*}
	\omega_f = \frac{\Gamma (k)^2 d_{\bfF}^{k+1} }{(4\pi)^{2(k-1)} } |a_f(1)|^2. 
\end{align*} 
Note that $ \omega_f $ only plays a very minor role because we have a direct extension of the bounds of Iwaniec, Hoffstein, and Lockhart \cite{Iwaniec-L(1),HL-L(1)}
\begin{align}
k^{-\vepsilon} \Lt	\omega_f \Lt k^{\vepsilon} . 
\end{align}

 \delete{ We have  the Hecke relation
	\begin{equation}\label{2eq: Hecke relation}
		\lambda_f(\vnu) \lambda_f(\mu) =  \sum_{\delta \in \SO^+/ \RU^2   : \, \delta | (\vnu, \mu) }   \lambda_f \Big( \frac{\vnu \mu}{\delta^2} \Big),
	\end{equation}
	and the Ramanujan bound (\cite{Taylor-Hilbert})
	\begin{align}
		\lambda_f(\mu) \Lt_{\vepsilon} N(\mu)^{\vepsilon} . 
\end{align} }

Let $\vnu, \mu \in \SO^{+}$. According to \cite{Luo-2003-Hilbert,Luo-Asai}, the Petersson trace formula reads
	\begin{equation}\label{2eq: Petersson}
	\begin{split}
		\frac 1 { (k-1)^2 } \sum_{f \in H_k} \omega_f &   \lambda_f   (\vnu) {\lambda_f (\mu)}   =  d_{\bfF}^{3/2} \cdot \delta (\vnu  \sim \mu )    \\
		&  + 4 \pi^2 d_{\bfF}  \sum_{\epsilon\, \in \RU/ \{\pm 1\}}  \sum_{c\, \in \SO^+ / \RU^2 } \frac{ S_{\bfF} (\vnu \epsilon, \mu \epsilon ;c) }{N(c)}   N J_{k-1} \bigg( \frac{4\pi \sqrt{\vnu \mu \epsilon^2 }  }{c} \bigg) , 
	\end{split}
\end{equation}
where  $\delta (\vnu \sim \mu)$ is the Kronecker $\delta$-symbol that detects $\vnu \sim \mu$ (or $\vnu/\mu \in \RU^2$),  and  
\begin{align}
	N J_{k-1} \bigg( \frac{4\pi \sqrt{\vnu \mu \epsilon^2 }  }{c} \bigg) = J_{k-1} \bigg( \frac{4\pi \sqrt{\vnu \mu \epsilon^2 }  }{c} \bigg) J_{k-1} \bigg( \frac{4\pi \sqrt{\vnu' \mu' \epsilon'^2 }  }{c' } \bigg). 
\end{align}

\begin{remark}
	Note that the sum over $ \epsilon \in \mathrm{U} $ in \cite[(9)]{Luo-2003-Hilbert} or \cite[(2)]{Luo-Asai} should instead be taken over  $ \epsilon \in \mathrm{U} /\{ \pm 1 \} $ since Luo inadvertently miscounted the center $\{ \pm \boldsymbol{1}_2 \}$ of $ \SL_2 (\SO) $. Actually, the formula \cite[(9)]{Luo-2003-Hilbert} for $\mathbf{F} = \mathbf{Q}$ differs slightly from the classical Petersson formula as in \cite[(14.15)]{IK}. 
\end{remark}

\begin{remark}
	For simplicity, we have restricted % $\epsilon $ on $\RU / \{ \pm \}$ and 
	the $c$-sum  on $ \SO^+ / \RU^2 \approx (\SO \smallsetminus \{0\} ) / \RU $ so as to avoid the use of $| \ |$ and written $ S_{\bfF} ( \vnu \epsilon, \mu \epsilon ;c )$ instead of  $ S_{\bfF} ( \vnu , \mu \epsilon^2 ;c )$ so as to facilitate our later analysis.   Subsequently, the representatives $c$ in $ \SO^+ / \RU^2 $ will  be chosen so that 
	\begin{align}\label{2eq: c, c'}
		c,\,  c'  \asymp \sqrt{N(c)} . 
	\end{align} 
\end{remark}

\subsection*{Abbreviation} For brevity, we shall henceforth  write  
\begin{align}
	\vkappa = k -1 . 
\end{align}

\subsection{Basic Properties of Bessel Functions} Next, we recollect some results of the Bessel function $J_{\vkappa} (x)$ of  positive argument $x \in \BR_+$ and integral order $\vkappa \in \mathbf{Z}_{+}$. Note that $ J_{\vkappa} (-x) = (-1)^{\vkappa} J_{\vkappa} (x) $, so sometimes $ x \in  \BR_+ $ may be   relaxed into  $ x \in  \BR $.

Firstly, in our preliminary analysis (see \S \ref{sec: Petersson}), we need the uniform crude bounds 
\begin{align}\label{2eq: prelim bounds}
	|J_{\vkappa}(x)| \leqslant 1, \qquad |J_{\vkappa} (x)| \Lt \Big( \frac{e x}{2 \vkappa}\Big)^{\vkappa}, 
\end{align}
which, by trivial estimation (and for the latter by the Stirling  formula), may be deduced from the Bessel and Poisson  integral representations (see \cite[2.2 (2), 3.3 (1)]{Watson}):  \begin{align*}%\label{2eq: Bessel's integral}
	J_{\vkappa}(x) = \frac{1}{\pi} \int_{0}^{\pi} \cos(\vkappa \theta - x \sin\theta) \nd \theta.
\end{align*}
\begin{align*}
	J_{\vkappa}(x) = \frac{  (x/2)^{\vkappa} }{ \sqrt{\pi}\, \Gamma(\vkappa+1/2)  } \int_{0}^{\pi } \cos(x \cos\theta) \sin^{2\vkappa}\theta \, \nd \theta.
\end{align*}
It is clear from \eqref{2eq: prelim bounds}  that $J_{\vkappa}(x)$ is negligibly small unless $x > \vkappa/2$.

Our analysis will rely crucially on the  Macdonald integral representation\footnote{It is interesting to note that this formula  is also used (in the reverse direction) by Iwaniec and Li  \cite{Iwaniec-Li-Ortho} in their study of the orthogonality of Hecke eigenvalues of holomorphic cusp forms.} (\cite[13.7(1)]{Watson}):
\begin{equation}\label{2eq: Macdonald}
		J_{\vkappa} (x) J_{\vkappa} (y) = \frac{1}{2\pi i^{\vkappa+1}} \int_{ {-\infty}}^{\infty}  { \exp \bigg( \frac {i} {2} \bigg( {r}  + \frac{x^2 + y^2}{  r}\bigg) \bigg) J_{\vkappa} \Big( \frac{xy}{ r}   \Big) } \frac{\nd r}{r},
\end{equation}
for $x, y \in \BR  $. %\red{Check $I_{\vkappa} (\pm i x)$ and $J_{\vkappa} (x)$, $x \in \BR_+$! }

For $x \in \BR_+$, let us also record the Mehler--Sonine  integral representation (see \cite[6.21(12)]{Watson}):
\begin{equation}\label{2eq: integral of J_0}
	J_0(x) = \frac{2}{\pi} \int_0^{\infty} \sin ( x \cosh t ) \nd t.
\end{equation} 
Moreover, we may write (see  \cite[\S 7.21]{Watson}) 
\begin{align}\label{3eq: B+-, x>1}
	J_0 (x) =  \frac 1 {\sqrt{x}} \big( \exp (ix) W_{+} (x) +  \exp (- ix) W_{-} (x) \big) ,  
\end{align}
so that 
\begin{align}\label{3eq: bounds W, x>1}
	x^j W^{   (j)}_{\pm} (x) \Lt_{ j}  1  , \qquad x \Gt 1. 
\end{align}

\subsection{Sums of Bessel Functions} 

For $h \in C_c^\infty(\BR_+)$, we have the following Bessel summation formula as in \cite[\S 5.5]{Iwaniec-Topics} (see also \cite[Lemma 4.1]{Luo-Sarnak-2003}):
\begin{equation}\label{eq: Luo_4.1}
	\sum_{2 \, \nmid  \, \vkappa  } 2\pi i^{\vkappa+1} h(\vkappa) J_{\vkappa}(x) =  -   \int_{-\infty}^{\infty} \widehat{h}(t) \sin(x \cos t ) \nd t, 
\end{equation}
where % $\widehat{h}$ is the Fourier transform of $h$, namely,
\begin{align}\label{2eq: Fourier}
	  \widehat{h}(t) = \int_{-\infty}^{\infty} h(y) \exp(i ty) \nd y . 
\end{align}

\subsection{Olver's Uniform Asymptotic Formula for Bessel Functions}  

As illustrated by Olver's uniform asymptotic formula, the behavior of $ J_{\vkappa} (\vkappa x) $ is very delicate if $x $ is close to $ 1$. However, it will be shown dramatically in Lemma \ref{lem: B(x, y), ranges} that one only needs to focus   on the case $ x > \vkappa^{\vepsilon} $, which is beyond the transition range so that Olver's   asymptotic is simple.   %As aforementioned, we only need to focus on the case  $x > 1/2$. 
It will be more convenient to apply Olver's results in a less explicit form  as in the following lemma. 

\begin{lem}\label{lem: Olver}
	   For $x > 1$ define \begin{align}\label{2eq: gamma}
	 	 \gamma (x) = \sqrt{x^2-1} - \mathrm{arcsec}\, x  . 
	 \end{align} 
Then for $ x > 2$, we have
\begin{align}\label{Beq: Olver}
	J_{\vkappa} (\vkappa x) = \frac {1} { \sqrt{\vkappa x}  } \bigg\{   { \exp (   i \vkappa \gamma (x) ) }  V_{+} (x) + { \exp ( - i \vkappa \gamma (x) ) }  V_{-} (x)  + O_A \bigg(\frac{1}{\vkappa^A  } \bigg)\bigg\} ,
\end{align}
for any $A>0$, in which  \begin{align}\label{2eq: V+-}
	 x^j V_{\pm}^{(j)} (x) \Lt_{j} 1  {\rm;}
\end{align} in particular, we have bound
\begin{align}\label{2eq: bound, Olver}
	 J_{\vkappa} (\vkappa x) \Lt \frac 1 {\sqrt{\vkappa x} }. 
\end{align}
\end{lem}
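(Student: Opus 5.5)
This lemma is a repackaging of Olver's uniform (Debye-type) asymptotic expansion for $J_{\vkappa}(\vkappa\,\mathrm{sec}\,\beta)$ in the oscillatory range. I would not redo the uniform Airy analysis, since for $x>2$ we stay a fixed distance away from the turning point $x=1$.

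\textbf{Debye expansion.} Write $x = \sec\beta$ with $\beta\in(\pi/3,\pi/2)$. The classical Debye expansion (Watson \S 8.41, or Olver's asymptotic with $\zeta$ bounded away from $0$) gives
\begin{align*}
J_{\vkappa}(\vkappa\sec\beta) = \Big(\frac{2}{\pi\vkappa\tan\beta}\Big)^{1/2}\Big\{\cos\big(\vkappa(\tan\beta-\beta)-\tfrac{\pi}{4}\big)\sum_{m<M}\frac{P_m(\cot\beta)}{\vkappa^{2m}} - \sin(\cdots)\sum_{m<M}\frac{Q_m(\cot\beta)}{\vkappa^{2m+1}} + R_M\Big\}.
\end{align*}
Here $P_m$ and $Q_m$ are polynomials, and the remainder satisfies $R_M\Lt_M \vkappa^{-2M}$ uniformly for $\beta\in[\pi/3,\pi/2)$. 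Note that $\tan\beta-\beta=\sqrt{x^2-1}-\mathrm{arcsec}\,x=\gamma(x)$. Also $\tan\beta=\sqrt{x^2-1}\asymp x$ for $x>2$, so the prefactor is $\asymp(\vkappa x)^{-1/2}$.

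Alternatively, one can derive this directly from a saddle point analysis of the Schl\"afli/Bessel integral $\frac1{2\pi}\int e^{i(x\vkappa\sin\theta-\vkappa\theta)}\nd\theta$. The saddle points $\cos\theta=1/x$ are nondegenerate uniformly for $x>2$, with second derivative $\asymp \vkappa x$. This gives the same structure, with the endpoint contributions cancelling by periodicity.

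\textbf{Defining $V_{\pm}$.} Expanding cosine and sine into exponentials, I set
\begin{align*}
V_{\pm}(x) = \sqrt{\frac{x}{2\pi\sqrt{x^2-1}}}\; e^{\mp i\pi/4}\sum_{m<M}\Big(\frac{P_m(1/\sqrt{x^2-1})}{\vkappa^{2m}} \pm i\,\frac{Q_m(1/\sqrt{x^2-1})}{\vkappa^{2m+1}}\Big).
\end{align*}
With $M$ chosen according to $A$, this produces \eqref{Beq: Olver}, with remainder $O(\vkappa^{-2M})\le O(\vkappa^{-A})$ inside the braces.

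\textbf{Derivative bounds.} For \eqref{2eq: V+-}, each building block is a function of $u=1/\sqrt{x^2-1}$ or of $x/\sqrt{x^2-1}$, and these are smooth on $x>2$. Moreover, $x\frac{\nd}{\nd x}$ applied to them yields functions of the same type: the operator $x\partial_x$ preserves symbols of order $0$ in $x$ that are smooth at $x=\infty$ in the variable $1/x$. Hence $x^jV_\pm^{(j)}\Lt_j 1$, uniformly in $\vkappa$ since $\vkappa\ge1$. The bound \eqref{2eq: bound, Olver} then follows immediately by taking absolute values.

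\textbf{Main obstacle.} The only delicate point is the uniformity of the remainder $R_M$ and of its coefficients as $x\to\infty$, i.e.\ as $\beta\to\pi/2$. Here I would rely on Olver's explicit error bounds, which are uniform in $\zeta\to-\infty$; in the saddle point route, the corresponding requirement is that the phase's higher derivatives scale like $\vkappa x$. Because $\cot\beta\to0$ smoothly as $\beta\to\pi/2$, the coefficients $P_m(\cot\beta)$ and $Q_m(\cot\beta)$ stay bounded, which gives the required uniformity.
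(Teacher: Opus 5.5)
Your proposal is correct, but it reaches the lemma by a different route than the paper.

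\textbf{The paper's route.} The paper starts from Olver's uniform Airy-type expansion \eqref{12eq: Jm(mx) = }. Its error term $O\big(\vkappa^{-2m-1}(1+\vkappa^{1/6}|\zeta|^{1/4})^{-1}\big)$ is already uniform on the whole range $x>1$. The paper then substitutes the exact representations \eqref{12eq: Ai, J} and \eqref{12eq: Ai, J, 2} of $\mathrm{Ai}(-y)$ and $\mathrm{Ai}'(-y)$ at $y=\vkappa^{2/3}(-\zeta)$, where $\frac{2}{3}y^{3/2}=\vkappa\gamma(x)$ exactly. Its $V_{\pm}$ are therefore non-explicit products of $W_{\pm}(\vkappa\gamma(x))$ or $W^{\flat}_{\pm}(\vkappa\gamma(x))$ with truncated sums of $A_s(\zeta)$ and $B_s(\zeta)$. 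The bounds $x^jV_{\pm}^{(j)}\ll_j 1$ then come from chain-rule bookkeeping using $\gamma(x)\asymp x$, $x^j\partial_x^j(\vkappa\gamma(x))\ll\vkappa x$ and $x^j\partial_x^j(1/\sqrt{x^2-1})\ll 1/x$.

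\textbf{Your route.} You use Debye's expansion, which is what the paper's formula becomes once the Airy functions are also expanded asymptotically. This makes your $V_{\pm}$ explicit: an elementary prefactor times polynomials in $1/\sqrt{x^2-1}$, weighted by $\vkappa^{-2m}$ and $\vkappa^{-2m-1}$. The symbol bounds then follow at once from your observation that $x\partial_x$ preserves smooth functions of $1/x$ on $[0,1/2]$.

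\textbf{The cost.} The price is the point you flagged. You need $R_M\ll_M\vkappa^{-2M}$ uniformly as $\beta\to\pi/2$, i.e.\ $x\to\infty$, and this is not part of the fixed-$\beta$ form of Debye's expansion. Boundedness of $P_m(\cot\beta)$ and $Q_m(\cot\beta)$ alone does not give it. Either of your two fixes supplies it:
\begin{itemize}
\item Olver's error bounds. This essentially returns you to the paper's input. You then re-expand $\mathrm{Ai}$ asymptotically and use uniqueness of asymptotic expansions to identify the coefficients with Debye's.
\item Stationary phase on the $2\pi$-periodic integrand (periodic since $\vkappa\in\mathbf{Z}$). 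Write the phase as $\vkappa x(\sin\theta-\theta/x)$. This is a smooth family in $1/x\in[0,1/2]$ whose saddles $\cos\theta=1/x$ are uniformly nondegenerate. It gives an expansion in powers of $(\vkappa x)^{-1}$ with remainder $O((\vkappa x)^{-A})$, uniformly in $x>2$.
\end{itemize}

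In short, the paper gets uniformity for free from Olver's theorem at the cost of opaque $V_{\pm}$. Your version has transparent $V_{\pm}$ but must establish that uniformity separately.
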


The reader is referred to Appendix \ref{app: Olver} for the Olver uniform asymptotic formula for $ J_{\vkappa} (\vkappa x) $ (in the case $x > 1$) and the deduction of Lemma \ref{lem: Olver} (in the case $x > 2$).

\subsection{Variant of Gallagher's Hybrid Large Sieve}

The hybrid large sieve inequality for Dirichlet characters of Gallagher \cite{Gallagher-LS} is a widely used tool, and it has been extended to number fields by Duke \cite{Duke-LargeSieve}. Their method works for additive characters as well, and, on the quadratic field $\bfF$, the hybrid large sieve reads:  
\begin{equation*}
	\sum_{\valpha (\mod\, c)} \int_{-T}^{T}  \bigg| \sum_{N(\mu) \leqslant X} b_\mu  N(\mu)^{it} e_{\bfF} \Big[ \frac{\mu \valpha}{c} \Big]  \bigg|^2 \nd t \Lt  ( T N(c) + X  ) \sum_{N(\mu) \leqslant X} |b_\mu|^2 ,
\end{equation*}
where $\boldsymbol{b} : \SO^{+} / \RU^2 \ra \BC$ is an arbitrary complex  sequence.  Thus, if it is specialized to those $\mu = \vnu n$ ($n \in \BZ_{+}, \, \vnu \in \SO^+$), then
\begin{equation*}
	\sum_{\valpha (\mod\, c)} \int_{-T}^{T}  \bigg| \sum_{n \leqslant Y} a_n n^{it} e_{\bfF}  \Big[\frac{\vnu n \valpha}{c} \Big] \bigg|^2 \nd t \Lt \big( T N(c) + Y^2 N(\vnu) \big) \sum_{n \leqslant Y} |a_n|^2 ,
\end{equation*}
for arbitrary $\boldsymbol{a} : \BZ_+ \ra \BC$, but this bound is too weak for our purpose in the $Y$-aspect. 

The next lemma will be proven by the method of Gallagher in Appendix \ref{appendix: large sieve}. Note that $Y^2$ above is now improved into $Y r (c)$.  

	\begin{lem}\label{C lem: Gallagher thm 2}
	Let $\boldsymbol{a} : \BZ_+ \ra \BC$ be an arbitrary complex sequence. Let $c \in \SO^+ $. As in \S \ref{sec: basic defn}, define $r(c)$ to be the largest rational divisor of $c$. Assume $T, Y \geqslant 1$.  Then
	\begin{equation}\label{Ceq: large sieve}
		\sum_{\valpha (\mod\, c)} \int_{-T}^{T}  \bigg| \sum_{n \leqslant Y} a_n n^{it} e_{\bfF} \Big[ \frac{n \valpha}{c} \Big] \bigg|^2 \nd t \Lt  ( T N(c) + Y r(c)   ) \sum_{n \leqslant Y} |a_n|^2 .
	\end{equation}
\end{lem}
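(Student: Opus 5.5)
The plan is to follow Gallagher's proof of the hybrid large sieve. The one new input is an explicit description of the additive characters $n \mapsto e_{\bfF}[n\valpha/c]$ restricted to rational integers $n$. Since $n \in \BZ$, we have
\begin{align*}
e_{\bfF}\Big[\frac{n\valpha}{c}\Big] = e\big(n \beta(\valpha)\big), \qquad \beta(\valpha) = \Tr\Big(\frac{\valpha}{c\sqrt{d_{\bfF}}}\Big) \ (\mathrm{mod}\ 1).
\end{align*}
So the $\valpha$-sum is governed by the homomorphism $\beta : \SO/c\SO \ra \BQ/\BZ$.

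\emph{Step 1: the image of $\beta$.} Since $\sqrt{d_{\bfF}}$ generates the different, the image is $\Tr(c^{-1}\mathfrak{d}^{-1})/\BZ$. By duality this equals $\frac1m\BZ/\BZ$, where $m\BZ = c\SO \cap \BZ$. Write $c = r(c) c_0$ with $c_0$ primitive, i.e.\ divisible by no rational integer $>1$. For a primitive element one has $\SO/c_0\SO \cong \BZ/N(c_0)\BZ$, so $c_0\SO\cap\BZ = N(c_0)\BZ$. Hence $c\SO \cap \BZ = r(c)N(c_0)\BZ$, and
\begin{align*}
m = \frac{N(c)}{r(c)} .
\end{align*}
Each value $b/m$ ($b \bmod m$) is therefore attained exactly $N(c)/m = r(c)$ times, and
\begin{align*}
\sum_{\valpha (\mathrm{mod}\, c)} \bigg|\sum_{n\in I} a_n e_{\bfF}\Big[\frac{n\valpha}{c}\Big]\bigg|^2 = r(c) \sum_{b (\mathrm{mod}\, m)} \bigg|\sum_{n \in I} a_n e\Big(\frac{nb}{m}\Big)\bigg|^2 .
\end{align*}
The points $b/m$ are $1/m$-spaced, so the classical large sieve bounds the right-hand side by $\Lt r(c)\,(m + |I|)\sum_{n\in I}|a_n|^2$ for any interval $I$.

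\emph{Step 2: Gallagher's lemma.} Apply Gallagher's lemma in the multiplicative form
\begin{align*}
\int_{-T}^{T}\bigg|\sum_n c_n n^{it}\bigg|^2 \nd t \Lt T^2 \int_0^\infty \bigg|\sum_{y < n \leqslant \tau y} c_n\bigg|^2 \frac{\nd y}{y}, \qquad \tau = e^{1/T},
\end{align*}
with $c_n = a_n e_{\bfF}[n\valpha/c]$, and then sum over $\valpha$. For each $y$, the inner interval $(y,\tau y]$ has length $\Lt y/T + 1$. By Step 1, the $\valpha$-sum is
\begin{align*}
\Lt r(c)\,( m + y/T ) \sum_{y<n\leqslant \tau y} |a_n|^2 .
\end{align*}
Integrate against $\nd y/y$, interchanging the order: each $n$ is counted for $y \in [n/\tau, n)$, a set of $\nd y/y$-measure $1/T$, and only $y \leqslant Y$ matters. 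This gives
\begin{align*}
T^2\, r(c)\Big( \frac{m}{T} + \frac{Y}{T^2}\Big)\sum_{n\leqslant Y}|a_n|^2 = \big(T N(c) + Y r(c)\big) \sum_{n \leqslant Y} |a_n|^2,
\end{align*}
as claimed, using $r(c)m = N(c)$.

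The main obstacle is Step 1, i.e.\ correctly identifying $m = N(c)/r(c)$ and the uniform multiplicity $r(c)$. I would check it via the structure theorem: $\SO/c\SO \cong \BZ/r(c)\BZ \times \BZ/r(c)N(c_0)\BZ$, together with the nondegeneracy of the trace pairing against the inverse different. Everything after that is the standard Gallagher argument, with routine care that $T \geqslant 1$ and with the $+1$ in the interval length absorbed into $m \geqslant 1$.
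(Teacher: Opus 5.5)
Your proof is correct and follows the paper's route: Gallagher's lemma, then the bound $(N(c)+r(c)\,|I|)\sum_{n\in I}|a_n|^2$ for the $\alpha$-sum over each short interval $I=(y,\tau y]$, which rests on $c\SO\cap\BZ=(N(c)/r(c))\BZ$, and finally integration in $y$. The only difference is cosmetic. The paper uses orthogonality over $\SO/c\SO$ and counts integers of $I$ in a residue class mod $c$, which needs only the inclusion $c\SO\cap\BZ\subseteq (N(c)/r(c))\BZ$. You instead pass to the dual side via trace duality and reduce to the rational large sieve modulo $m=N(c)/r(c)$ with multiplicity $r(c)$.
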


	\begin{coro}\label{C cor: Gallagher thm 2}
	Let notation be as in Lemma \ref{C lem: Gallagher thm 2}.   Then for $\vnu \in \SO^{+}$ we have 
	\begin{equation}\label{eq: cor: large sieve}
		\sum_{\valpha (\mod\, c)} \int_{-T}^{T}  \bigg| \sum_{n \leqslant Y} a_n n^{it} e_{\bfF} \Big[ \frac{\vnu n \valpha}{c} \Big] \bigg|^2 \nd t \Lt \big( T N(c) + Y r(c_{\vnu}) N((\vnu,c)) \big) \sum_{n \leqslant Y} |a_n|^2 ,
	\end{equation}
where $c_{\vnu}$ is defined so that  $(c) =  (\vnu,c)  (c_{\vnu}) $. 
\end{coro}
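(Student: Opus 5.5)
The plan is to reduce the corollary directly to Lemma \ref{C lem: Gallagher thm 2}. The reduction has two parts: cancel the common factor of $\vnu$ and $c$ in the fraction $\vnu\valpha/c$, and then count how often each residue class modulo the smaller modulus is hit.

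First, set up the factorisation. Since $h_{\bfF}^{+}=1$, the ideal $(\vnu,c)$ has a totally positive generator $\delta\in\SO^+$. Put $c_{\vnu}=c/\delta$ and $\vnu_1=\vnu/\delta$. Both lie in $\SO$, and we may take $c_{\vnu}\in\SO^+$, so that $(c)=(\vnu,c)(c_{\vnu})$ as in the statement. Because $\delta$ is exactly the gcd, the ideals $(\vnu_1)$ and $(c_{\vnu})$ are coprime. We have
\begin{align*}
\frac{\vnu n\valpha}{c}=\frac{n\,\vnu_1\valpha}{c_{\vnu}} .
\end{align*}
The character $x\mapsto e_{\bfF}[x/c_{\vnu}]$ on $\SO$ depends only on $x \bmod c_{\vnu}$. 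This holds because $\Tr(y/\sqrt{d_{\bfF}})\in\BZ$ for every $y\in\SO$, as $\sqrt{d_{\bfF}}$ generates the different. Hence the summand depends on $\valpha$ only through $\beta\equiv\vnu_1\valpha \pmod{c_{\vnu}}$.

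Next, count the fibres. The map $\valpha \bmod c\mapsto \beta=\vnu_1\valpha \bmod c_{\vnu}$ is a composition of two surjective group homomorphisms:
\begin{itemize}
\item the reduction $\SO/c\SO\to\SO/c_{\vnu}\SO$, each of whose fibres has $N(c)/N(c_{\vnu})=N((\vnu,c))$ elements;
\item multiplication by $\vnu_1$ on $\SO/c_{\vnu}\SO$, which is a bijection since $\vnu_1$ is invertible modulo $c_{\vnu}$.
\end{itemize}
Therefore every $\beta \bmod c_{\vnu}$ has exactly $N((\vnu,c))$ preimages, and the left-hand side of \eqref{eq: cor: large sieve} equals
\begin{align*}
N((\vnu,c))\sum_{\beta (\mod\, c_{\vnu})}\int_{-T}^{T}\bigg|\sum_{n\leqslant Y}a_n n^{it}e_{\bfF}\Big[\frac{n\beta}{c_{\vnu}}\Big]\bigg|^2\nd t .
\end{align*}

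Finally, apply Lemma \ref{C lem: Gallagher thm 2} with modulus $c_{\vnu}\in\SO^+$. This bounds the last expression by
\begin{align*}
N((\vnu,c))\big(TN(c_{\vnu})+Y r(c_{\vnu})\big)\sum_{n\leqslant Y}|a_n|^2 = \big(TN(c)+Y r(c_{\vnu})N((\vnu,c))\big)\sum_{n\leqslant Y}|a_n|^2 ,
\end{align*}
which is the claimed bound.

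There is no real obstacle. The only points needing care are the coprimality of $\vnu_1$ and $c_{\vnu}$, and the well-definedness of the character modulo $c_{\vnu}$. Replacing $c_{\vnu}$ by a unit multiple only permutes the residues $\beta$, so the choice of totally positive generator is harmless.
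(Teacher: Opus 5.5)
Your proposal is correct and follows the same route as the paper: cancel the common factor $(\vnu,c)$, rewrite the sum over $\valpha \ (\mathrm{mod}\, c)$ as $N((\vnu,c))$ copies of the sum over $\beta \ (\mathrm{mod}\, c_{\vnu})$, and apply Lemma \ref{C lem: Gallagher thm 2} with modulus $c_{\vnu}$, using $N((\vnu,c))\,N(c_{\vnu})=N(c)$. Your explicit fibre count (reduction modulo $c_{\vnu}$ followed by the bijection $\beta\mapsto\vnu_1\beta$) spells out the reindexing that the paper's first displayed equality performs without comment.
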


	\begin{proof}
	%Choose $c^\star \in \SO^+$ so that $(c) =  (\vnu,c)  (c^\star) $. Now 
	The left side of \eqref{eq: cor: large sieve} is equal to 
	$$ \sum_{\valpha (\mod\, c)} \int_{-T}^{T}  \bigg| \sum_{n \leqslant Y} a_n n^{it} e_{\bfF} \bigg[ \frac{ n \valpha}{c_{\vnu}} \bigg] \bigg|^2 \! \nd t = N ((\vnu, c)) \! \sum_{\beta (\mod\, c_{\vnu})} \int_{-T}^{T}  \bigg| \sum_{n \leqslant Y} a_n n^{it} e_{\bfF}  \bigg[\frac{ n \beta }{c_{\vnu}} \bigg] \bigg|^2 \! \nd t. $$
By Lemma \ref{C lem: Gallagher thm 2}, this is bounded by
	$$ N((\vnu,c)) \big( T N(c_{\vnu}) + Y r(c_{\vnu})  \big) \sum_{n \leqslant Y} |a_n|^2 , %\leqslant \big( T N(c) + Y r(c) N((\vnu,c)) \big) \sum_{n \leqslant Y} |a_n|^2 , 
	$$
	and hence the desired bound as $N((\vnu,c)) \cdot  N(c_{\vnu}) = N (c)$. % and $ r (c^{\star}) \leqslant r (c)  $. 
\end{proof}

%The hybrid large sieve inequality of Gallagher \cite{Gallagher-LS} is a widely used tool, and it has been extended to number fields by Duke \cite{Duke-LargeSieve}. On the quadratic field $\bfF$, the hybrid large sieve reads: \red{Where is it in Duke??????}

\section{Initial Reductions}\label{sec: initial reductions}

For technical simplifications, we shall henceforth replace the range $n\leqslant Y$ in Theorems \ref{thm: large sieve} and \ref{thm: large sieve, average} by the dyadic $Y < n \leqslant 2Y$ ($n \sim Y$). It is clear that the results so modified are of the same strength. Denote 
\begin{equation*}
	\VA =  \bigg( \sum_{n\sim Y} |a_n|^2 \bigg)^{1/2}.
\end{equation*}
 
 Let us also make the mild assumption $Y^{\vepsilon},  N (\vnu)^{\vepsilon} < K $ as otherwise we may deduce   \eqref{1eq: Luo LS, Asai} from \eqref{eq: large sieve, averaged} (by the $\vepsilon$-convention).

\subsection{Set-up} \label{sec: setup}
 Let $K^\upvarepsilon \leqslant  H \leqslant K^{1-\vepsilon}$.  For $\varww \in C_c^\infty(\BR)$ define the weight function
\begin{equation}\label{eq: def of test function h}
	h(x) = \varww \bigg( \frac{x - K}{H}\bigg),
\end{equation}
and consider the smoothly weighted sum 
\begin{equation}\label{3eq: S(a)}
S_{\vnu} (\boldsymbol{a}) =	  \sum_{2|k} \frac {h(\vkappa)}  {\vkappa^2 } \sum_{f\in H_k} \omega_f  \bigg| \sum_{n \sim Y} a_n \lambda_f(\vnu n ) \bigg|^2,  \qquad \text{($\vkappa = k-1$)}.
\end{equation}
%For example, we may choose $ \varww $ to be a bump function which dominates the characteristic function of a fixed interval.  

\subsection{Application of the Petersson Formula} \label{sec: Petersson}

By opening the square in \eqref{3eq: S(a)} and applying the Petersson trace formula in \eqref{2eq: Petersson}, we infer that 
\begin{align}
	S_{\vnu} (\boldsymbol{a}) =   d_{\bfF}^{3/2}    D  (\boldsymbol{a}) + 4\pi^2 d_{\bfF}  \sum_{\epsilon \in \RU /\{\pm 1\}}   P_{\epsilon \vnu} (\boldsymbol{a}), 
\end{align}
where 
\begin{align}
	D (\boldsymbol{a}) = \sum_{2 \, \nmid  \, \vkappa  }   h(\vkappa) \sum_{n \sim Y} |a_n|^2, 
\end{align}
\begin{align}\label{3eq: P(a)}
	P_{\vnu} (\boldsymbol{a}) = \sum_{2 \, \nmid  \, \vkappa } h(\vkappa)  \mathop{\sum \sum}_{m,n \sim Y} a_m \overline{a}_n   \sum_{c\, \in \SO^+ / \RU^2}   \frac{ S_{\bfF} (\vnu m ,  \vnu n ;c)  }{N(c)}  NJ_{\vkappa} \bigg( \frac{4\pi \sqrt{  \vnu^2  m n} }{ c  } \bigg). 
\end{align}
It is clear that the diagonal sum $D (\boldsymbol{a}) = O (H \VA^2 )$.  Moreover, the $\epsilon$-sum over units in the off-diagonal summation does not play an essential role, so one may absorb $\epsilon$ into $\vnu$ as suggested by the notation. This is because the Bessel product  
$	N J_{\vkappa} \big(  {4\pi \sqrt{   \epsilon^2 \vnu^2  m n} } / { c  } \big) $  in the sum $P_{\epsilon \vnu} (\boldsymbol{a})$ is negligibly small unless 
\begin{align*}
	  |\epsilon \vnu| , \, |\epsilon' \vnu'|   > \frac {\vkappa}  {8\pi e Y}, 
\end{align*}
by the bounds in \eqref{2eq: prelim bounds}, but there are at most $O (\log K)$ many such units $\epsilon$ in this range  as $ \RU / \{\pm 1\}$ is the cyclic group $\langle \epsilon_0 \rangle$.  %Note that the desired estimate involves only $N (\vnu)$.  
For simplicity, let us still restrict ourselves to  the case $ \vnu \in \SO^{+} $ in the study of $P_{\vnu} (\boldsymbol{a}) $ so as to avoid writing $| \ |$ everywhere (the reader may follow our subsequent analysis in the case $\vnu = 1$  as well).

	\subsection{Truncation and Partition}  \label{sec: partition}
	Recall that $J_{\vkappa}(x)$ is negligibly small for $x \leqslant \vkappa/2$ (see \eqref{2eq: prelim bounds}), so we may restrict the $c$-sum in \eqref{3eq: P(a)} to the range
	\begin{equation}\label{eq: range of c}
		N(c) \Lt \frac {N(\vnu) Y^2} {K^2} , 
	\end{equation}
	at the cost of a negligible error. 
Keep in mind that (see \eqref{2eq: c, c'})  $$c, c' \asymp \sqrt{N(c)}. $$ For dyadic $C \Lt N(\vnu) Y^2/K^2$, define
\begin{equation}\label{3eq: D(C)}
	\BSD(C) = \big\{ c\in \SO^+  : N(c) \sim C, \, c,c'\asymp \sqrt{C} \big\}.
\end{equation} 

Further, in order to facilitate our later analysis, we need to partition the $c$-sum according to the value of \begin{align}\label{3eq: theta}
	\theta = \frac {\vnu' c  }  {\vnu c '  }. 
\end{align} More explicitly, we introduce
	\begin{align} 
	\label{3eq: D infty}	& \BSD_\vnu^{0}   = \big\{ c\in \SO^+ : \vnu' c / \vnu  c '  < 1/3  \big\},  \\
		\label{3eq: D 1}	& \BSD_\vnu^1 = \big\{ c\in \SO^+: | 1-  \vnu' c / \vnu  c ' | \Lt H^\upvarepsilon/H  \big\},  \\
	\label{3eq: D Delta}	&\BSD_\vnu (\varDelta)   = \big\{ c\in \SO^+:  1 - \vnu' c / \vnu  c ' \sim \varDelta /H  \big\} ,
\end{align}  
for dyadic $H^\upvarepsilon \leqslant \varDelta \leqslant H $.  	Accordingly, we partition $P_\vnu(\boldsymbol{a})$ as in \eqref{3eq: P(a)} into the sum of $P_\vnu^{0} (\boldsymbol{a})$,   $P_\vnu^1(\boldsymbol{a})$, and  those $P_\vnu(\boldsymbol{a};\varDelta)$. Set 
\begin{align}\label{3eq: D1(C), D(C, Delta)}
   \BSD_\vnu^1 (C) = \BSD(C) \cap \BSD_\vnu^1,  \qquad 	 \BSD_\vnu (C,\varDelta) = \BSD(C) \cap \BSD_\vnu (\varDelta), 
\end{align} 
and let  $P_\vnu^1(\boldsymbol{a};C) $ and  $P_\vnu(\boldsymbol{a};C,\varDelta)$ denote their contributions to $P_\vnu^1(\boldsymbol{a})$ and $P_\vnu(\boldsymbol{a};\varDelta)$   respectively. Since $\theta \theta ' = 1$, we have focused mainly in the case $ \theta < 1 $ by symmetry.

\subsection{Counting Lattice Points} 

Later, it will be required to know the number of points in $\BSD_\vnu^1 (C)$ and $\BSD_\vnu (C,\varDelta)$. To this end, let us prove here a simple lemma. 

\begin{lem}\label{lem:lattice_count}
	Let  $0 < \delta \Lt 1 \Lt C$.  Then for any  $\xi \in \BR^{\times}$ 
	$$  \text{\small \bf \#}  \left\{ c \in \SO : c, c' \asymp \sqrt{C}, \, \left| 1 - \xi \frac{c }{c' }   \right| \leqslant \delta \right\}  \Lt C\delta + \sqrt{C},$$
	where the implied constant depends only on the field $\bfF$.
\end{lem}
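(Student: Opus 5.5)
We view $\SO$ as a lattice in $\BR^2$ through the embedding $c \mapsto (c, c')$. The condition $|1 - \xi c/c'| \leqslant \delta$ says that the point $(c, c')$ lies in a thin sector about the line $c' = \xi c$. Since $c, c' \asymp \sqrt{C}$, we may assume $\xi \asymp 1$ (more precisely, $\xi$ has the sign and size forced by $c'/c$). If not, the condition $|1-\xi c/c'|\le \delta$ with $\delta \Lt 1$ small cannot hold, or else the count is trivially handled by the $C\delta$ term once $\delta$ is of constant size. So we count lattice points of $\SO$ inside the region
\begin{align*}
R = \big\{ (u, v) \in \BR^2 : u, v \asymp \sqrt{C}, \ |v - \xi u| \leqslant \delta |v| \big\}.
\end{align*}
This region is contained in a parallelogram-like strip segment. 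Its length along the line $v = \xi u$ is $\asymp \sqrt{C}$, and its width transversal to that line is $\asymp \delta \sqrt{C}$. Hence its area is $\Lt C\delta$, and its perimeter is $\Lt \sqrt{C}$.

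The key step is the standard lattice-point count for convex regions. $R$ is convex, being an intersection of a sector (a convex cone, since it is the region between two lines through the origin) with a box. For a convex region $R$ and a fixed lattice $\Lambda$, we have
\begin{align*}
\text{\small \bf \#} (R \cap \Lambda) \leqslant \frac{\mathrm{Area}(R)}{\mathrm{covol}(\Lambda)} + O_\Lambda\big(\mathrm{Perimeter}(R) + 1\big).
\end{align*}
One proof is to cover $R$ by translates of a fundamental domain meeting $R$. Those translates lie in the $O(1)$-neighbourhood of $R$, whose area is at most $\mathrm{Area}(R) + O(\mathrm{Perimeter}(R) + 1)$ by Steiner's formula for convex sets. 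Applying this with $\Lambda = \SO$, whose covolume is $\sqrt{d_{\bfF}}$, gives
\begin{align*}
\text{\small \bf \#} \Lt C\delta + \sqrt{C} + 1 \Lt C\delta + \sqrt{C},
\end{align*}
using $C \Gt 1$. The implied constant depends only on $\bfF$ (through the lattice) and on the constants implicit in $\asymp$.

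An alternative, more hands-on route avoids convex geometry. Write $c = a + b\omega$ with $a, b \in \BZ$. For each fixed $b$, with $|b| \Lt \sqrt{C}$ (there are $O(\sqrt{C})$ such $b$, since $c - c' = b(\omega - \omega')$), the condition restricts $a$ to an interval of length $\Lt \delta\sqrt{C}$. That interval contains at most $\delta\sqrt{C} + 1$ integers, and summing over $b$ gives the same bound.

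The only mild obstacle is checking carefully that the $c$-admissible set for fixed $b$ really is an interval of length $O(\delta\sqrt{C})$. The map $a \mapsto c'/c$ has derivative of size $\asymp 1/\sqrt{C}$ in the range $c, c' \asymp \sqrt{C}$. Therefore the constraint $|c'/c - \xi| \Lt \delta$ pins $a$ to an interval of length $\Lt \delta\sqrt{C}$, and this is where $\xi \asymp 1$ is used.
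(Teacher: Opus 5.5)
Your main argument is essentially the paper's. You embed $\SO$ in $\BR^2$ via $c \mapsto (c,c')$ and note that the region is a box of side $\asymp\sqrt{C}$ cut by a sector of angle $\asymp\delta$ about the line $c'=\xi c$, so its area is $\Lt C\delta$ and its perimeter is $\Lt\sqrt{C}$; you then count lattice points by area plus perimeter. The only difference is that the paper cites Davenport's Lipschitz principle, whereas you prove the needed upper bound for convex regions directly via Steiner's formula. Your preliminary reduction is also fine: either $\delta \Gt 1$, handled by the trivial $O(C)$ count, or $\xi \asymp 1$.

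One correction concerns your alternative fibering route. The claim that for each fixed $b$ the admissible $a$ form an interval of length $\Lt \delta\sqrt{C}$ is false. The reason is that $\partial_a (c'/c) = b(\omega-\omega')/c^2$ has size $\asymp |b|/C$, not $1/\sqrt{C}$.

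For example, take $\xi = 1$, which is the case $\vnu = 1$ in the application. The fiber $b=0$ then consists of all rational integers $a \asymp \sqrt{C}$, and every one of them satisfies the condition. So this single fiber already contains $\asymp\sqrt{C}$ points, not $O(\delta\sqrt{C}+1)$.

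The total can be rescued by a case split, using $b(\omega-\omega') = c(1-c'/c)$ on the region.
\begin{itemize}
\item If $|1-\xi| \Lt \delta$, only $O(\delta\sqrt{C}+1)$ values of $b$ occur, each with $O(\sqrt{C})$ values of $a$.
\item If $|1-\xi| \Gt \delta$, then $|b| \asymp |1-\xi|\sqrt{C}$, and each fiber has length $\Lt \delta\sqrt{C}/|1-\xi|$.
\end{itemize}
As written, however, that route does not prove the lemma; the convex-geometry argument is the one to keep.
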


%\subsection{Reduction} By the discussion above, the problem is reduced to proving the following bound for $ P_{\vnu} (\boldsymbol{a})   $. 

\begin{figure}[ht]
	\centering
	\begin{tikzpicture}[scale=0.4] 
		\def\vx{1.2} \def\vy{0.2}   
		\def\ux{0.3} \def\uy{1.2}   
		
		\def\ang{6.2} 
		
		\coordinate (P1) at ({2.7*\vx + 1.3*\ux}, {2.7*\vy + 2.7*\uy});
		\coordinate (P2) at ({7.7*\vx + 1.3*\ux}, {7.7*\vy + 2.7*\uy});
		\coordinate (P3) at ({7.7*\vx + 7.7*\ux}, {7.7*\vy + 7.7*\uy});
		\coordinate (P4) at ({2.7*\vx + 7.7*\ux}, {2.7*\vy + 7.7*\uy});
		
		\begin{scope} 
			\clip (P1) -- (P2) -- (P3) -- (P4) -- cycle;
			
			\fill[lightgray] (0,0) -- (15, {15*tan(45-\ang)}) -- (15, {15*tan(45+\ang)}) -- cycle;
		\end{scope}
		
		\draw[thin] (-0.08,-0.08) -- (9, {9*tan(45+\ang)});
		\draw[thin] (-0.08,-0.08) -- (12, {12*tan(45-\ang)});
		
		\draw[thin] (P1) -- (P2) -- (P3) -- (P4) -- cycle;
		
		\foreach \i in {-1,...,8}
		\foreach \j in {-1,...,8}
		{ 
			\fill[black!80] ({\i*\vx + \j*\ux}, {\i*\vy + \j*\uy}) circle (1.4pt);
		}
		
		\node[below] at (0,0) {\footnotesize $O$};
		
		\draw [draw = black, fill = white] (0, 0) circle (0.1);
		
	\end{tikzpicture}
	\caption{Area $\EuScript{R}_{\xi} (C, \delta)$.}
	\label{fig:region_R_clean}
\end{figure}

\begin{proof}

	Consider $ \SO \hookrightarrow  \BR^2 $ as a lattice  so that the problem is reduced to counting lattice points in the (enlarged) area $\EuScript{R}_{\xi} (C, \delta)$ depicted in Figure \ref{fig:region_R_clean}. More explicitly, $\EuScript{R}_{\xi} (C, \delta)$  is the intersection of a parallelogram of side  lengths $\asymp \sqrt{C}$ and an origin-centered sector of angle $\asymp \delta$.  Note that its area 
	\begin{align*}
		\text{\it Area}\, (\EuScript{R}_{\xi} (C, \delta)) \asymp (\sqrt{C})^2 \cdot \delta = C \delta,  
	\end{align*}
while its perimeter 
\begin{align*}
	\text{\it Per}\, (\EuScript{R}_{\xi} (C, \delta)) \Lt \sqrt{C} .   
\end{align*}
Then this lemma follows from a fundamental result in the geometry of numbers known as the Lipschitz principle \cite[Theorem]{Davenport-Lipschitz}: For a lattice $\Lambda \subset \BR^2$ and a region $\EuScript{R} \subset \BR^2$ (under certain conditions (I, II) as in \cite{Davenport-Lipschitz}), 
\begin{align*}
	 \text{\small \bf \#} (\Lambda \cap \EuScript{R}) - \frac { 	\text{\it Area}\, (\EuScript{R} )} { \text{\it Area}\, (\Lambda \backslash \BR^2 ) } \Lt 1 + \text{\it Per}\, (\EuScript{R} ). 
\end{align*}
\end{proof} 

By applying Lemma \ref{lem:lattice_count} with $\xi = \vnu' /\vnu $ and  $\delta \asymp H^{\upvarepsilon}/H$ or $\delta \asymp \varDelta/H$, we obtain bounds for the cardinalities of   $\BSD_\vnu^1 (C)$ and  $\BSD_\vnu (C,\varDelta)$ as follows. 

\begin{coro}\label{cor:lattice_count}
	 We have
	 \begin{align*}
	 	\text{\small \bf \#}  \BSD_\vnu^1 (C) \Lt   C H^{\upvarepsilon}/H + \sqrt{C}, \qquad	\text{\small \bf \#}  \BSD_\vnu (C,\varDelta) \Lt  C \varDelta/H + \sqrt{C} . 
	 \end{align*}
\end{coro}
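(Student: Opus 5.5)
The corollary is a direct specialization of Lemma \ref{lem:lattice_count}, so the plan is to verify that each of $\BSD_\vnu^1 (C)$ and $\BSD_\vnu (C,\varDelta)$ sits inside one of the sets counted there, for suitable $\xi$ and $\delta$.

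We take $\xi = \vnu'/\vnu \in \BR^{\times}$, which is legitimate since $\vnu \in \SO^+$ is totally positive and hence $\vnu, \vnu' \neq 0$. Then $\theta = \xi c/c'$, matching the quantity in the lemma. By the definition \eqref{3eq: D(C)} of $\BSD(C)$, every $c$ in either set satisfies $c, c' \asymp \sqrt{C}$, which is the first condition of the lemma.

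For $\BSD_\vnu^1 (C)$, the defining condition \eqref{3eq: D 1} says $|1-\xi c/c'| \Lt H^{\vepsilon}/H$. We therefore set $\delta = c_0 H^{\vepsilon}/H$, with $c_0$ the implied constant in \eqref{3eq: D 1}. Since $H \geqslant K^{\vepsilon}$ and we may shrink the exponent, $\delta \Lt 1$. The lemma then gives $\text{\small \bf \#} \BSD_\vnu^1 (C) \Lt C H^{\vepsilon}/H + \sqrt{C}$.

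For $\BSD_\vnu (C,\varDelta)$, the condition \eqref{3eq: D Delta} is $1-\xi c/c' \sim \varDelta/H$. This implies $|1-\xi c/c'| \leqslant 2\varDelta/H$, so we set $\delta = 2\varDelta/H$. Because $\varDelta \leqslant H$, we have $\delta \leqslant 2 \Lt 1$. Dropping the finer information (the sign and the lower bound) only enlarges the set, and the lemma yields $\text{\small \bf \#} \BSD_\vnu (C,\varDelta) \Lt C\varDelta/H + \sqrt{C}$.

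The remaining hypothesis of the lemma is $C \Gt 1$. This holds because $C$ is dyadic with $N(c) \sim C$ for a nonzero integer $c$, so $C \geqslant 1/2$. If one prefers, the trivially small values of $C$ can be handled separately, since the set then has $O(1)$ elements, which is within the bound.

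The only point needing care is uniformity: the constant in the lemma depends only on $\bfF$, and in particular not on $\xi$. This is what allows $\xi = \vnu'/\vnu$ to vary with $\vnu$ without affecting the bounds, so I expect no real obstacle.
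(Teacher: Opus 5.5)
Your proposal is correct and is the paper's own argument: the paper simply applies Lemma \ref{lem:lattice_count} with $\xi = \vnu'/\vnu$ and $\delta \asymp H^{\vepsilon}/H$ or $\delta \asymp \varDelta/H$, exactly as you do. Your extra checks ($C \geqslant 1/2$, $\delta \Lt 1$, uniformity in $\xi$) are the routine hypotheses the paper leaves implicit; note that $H^{\vepsilon}/H \leqslant 1$ already holds for $H \geqslant 1$, so no shrinking of exponents is needed.
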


\subsection{Reductions} 

The rest of this paper will be mainly devoted to the proof of the following bounds for $P_\vnu^{0} (\boldsymbol{a})$, $P_\vnu^1(\boldsymbol{a})$, and $P_\vnu(\boldsymbol{a};\varDelta)$.

\begin{proposition}\label{prop: reduction}
	Let $Y, K, H,  \varDelta$ be large parameters such that   %\begin{align}\label{3eq: conditions} 
	$	  K^\upvarepsilon \leqslant H \leqslant K^{1-\vepsilon}$, and  $ H^\upvarepsilon \leqslant \varDelta \leqslant H$. 
%	\end{align} 
Then for $\vnu \in \SO^+$, we have
	\begin{equation}\label{eq: estimate for P_infty}
		P_\vnu^{0} (\boldsymbol{a}) \Lt_{\vepsilon} \frac{Y^2  }{H K^2 } \sqrt{N(\vnu)} Y^\upvarepsilon \VA^2 ,
	\end{equation} 
\begin{equation}\label{eq: estimate for P_v^1}
	P_\vnu^1(\boldsymbol{a}) \Lt_{\vepsilon}  \bigg( \frac {Y^2} {H K^2} N(\vnu) + \frac {Y} {K} \sqrt{N(\vnu)}  + \frac {H Y} {K}   \bigg) Y^\upvarepsilon \VA^2 , 
\end{equation}
	\begin{equation}\label{eq: estimate for P_Delta}
		P_\vnu(\boldsymbol{a};\varDelta) \Lt_{\vepsilon}  \bigg(  \bigg(\frac{H}{\sqrt{  K}} +  1  \bigg) \bigg(\frac {Y^2} {HK^2} N(\vnu)+ \frac {Y} {K} \sqrt{N(\vnu)} \bigg) + \frac {HY} {K} \bigg)  Y^\upvarepsilon \VA^2 .
	\end{equation}
\end{proposition}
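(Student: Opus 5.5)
The plan has two parts. First we analyse the weighted Bessel sum $B(x,y) = \sum_{2\nmid\vkappa} h(\vkappa) J_{\vkappa}(x) J_{\vkappa}(y)$, with
\begin{align*}
x = \frac{4\pi\vnu\sqrt{mn}}{c}, \qquad y = \frac{4\pi\vnu'\sqrt{mn}}{c'}, \qquad \frac{y}{x} = \frac{1}{\theta}.
\end{align*}
Then we feed the resulting information into the Kloosterman–sum structure of $P_\vnu(\boldsymbol{a})$ and use the hybrid large sieve of Corollary \ref{C cor: Gallagher thm 2}.

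\textbf{The Bessel sum.} Insert the Macdonald formula \eqref{2eq: Macdonald} and interchange the $\vkappa$-sum with the $r$-integral. The factor $i^{-\vkappa-1}$ combines with $h(\vkappa)J_\vkappa(xy/r)$, so the Bessel summation formula \eqref{eq: Luo_4.1} gives
\begin{align*}
B(x,y) = -\frac{1}{4\pi^2}\int_{-\infty}^{\infty}\int_{-\infty}^{\infty} \widehat h(t)\, \exp\Big(\frac i2\Big(r+\frac{x^2+y^2}{r}\Big)\Big) \sin\Big(\frac{xy}{r}\cos t\Big)\,\frac{\nd r}{r}\,\nd t .
\end{align*}
The transform $\widehat h(t) = H e^{iKt}\widehat\varww(Ht)$ localizes $t$ to $|t|\Lt H^{\vepsilon}/H$ with oscillation $e^{iKt}$. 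The resulting $r$-integral has phase $\tfrac12\big(r + (x^2+y^2 \pm 2xy\cos t)/r\big)$, with stationary point $r_0 = \sqrt{x^2+y^2\pm 2xy\cos t}$. The sign giving $r_0 \approx |x-y|$ is the relevant one, and the phase value there is $|x-y|$ plus a correction of order $xy\,t^2/|x-y|$. This combines with $e^{iKt}$.

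Repeated integration by parts in $t$ then shows $B(x,y)$ is negligible unless $x, y > K^{1+\vepsilon}$ when $|1-\theta|\Gt H^{\vepsilon}/H$ (Lemma \ref{lem: B(x, y), ranges} as previewed). In the range where it is not negligible, we expect either of two routes to work: use Olver's Lemma \ref{lem: Olver} directly (both arguments are beyond the transition range), or evaluate the stationary phase to write
\begin{align*}
B(x,y) \approx \frac{H}{\sqrt{xy}}\, e^{\pm i(x-y)}\, \Phi(\cdot),
\end{align*}
where $\Phi$ is a smooth weight whose support encodes that $H|\vkappa(\gamma(x/\vkappa)-\gamma(y/\vkappa))'|$ is $\Lt H^{\vepsilon}$. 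Here $H$ is the sum's length and each $J$ has size $\asymp (xy)^{-1/4}$.

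\textbf{Case $\BSD^0_\vnu$.} Here $\theta<1/3$, so $x$ and $y$ differ by a factor $\geqslant 3$. The $t$-phase $Kt - xy\,t^2/|x-y|$ has no stationary point unless $y\Gt K$ and $x\Gt K$, and even then a saving of order $H$ survives: $B\Lt H^{\vepsilon}/\sqrt{xy}$ rather than $H/\sqrt{xy}$. Then bound the Kloosterman sums trivially via Weil \eqref{2eq: Weil}. Use Cauchy–Schwarz in $m,n$ and sum over $c$ in the range \eqref{eq: range of c}, with $\sqrt{xy}\asymp \sqrt{N(\vnu)}\,Y/\sqrt{C}$. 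This yields $\sum_C C^{1/2}\cdot\sqrt C\cdot(\sqrt C/(\sqrt{N(\vnu)}Y))\cdot Y\VA^2 /H$ up to the appropriate normalization, which should reproduce \eqref{eq: estimate for P_infty}. This should be routine once the Bessel estimate is in hand.

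\textbf{Cases $\BSD^1_\vnu$ and $\BSD_\vnu(\varDelta)$.} Here the Weil bound loses too much, so we open the Kloosterman sum as $\sum^\star_{\valpha} e_{\bfF}[\vnu(m\valpha+n\bar\valpha)/c]$. The $m,n$-dependence of $B$ is $e^{\pm i(x-y)}\Phi$ with $x-y = 4\pi\sqrt{mn}\,(\vnu/c-\vnu'/c')$. Write it via Mellin inversion as $\int (mn)^{it}\,(\dots)\,\nd t$ with $|t|\Lt T := |x-y|\,K^{\vepsilon}$, which is $\asymp (\varDelta/H)\sqrt{N(\vnu)}Y/\sqrt C$ in $\BSD_\vnu(C,\varDelta)$. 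Cauchy–Schwarz separates $m$ and $n$, and Corollary \ref{C cor: Gallagher thm 2} bounds each $c$-term by
\begin{align*}
\frac{H}{\sqrt{xy}\,N(c)}\,\big(T N(c) + Y\, r(c_\vnu) N((\vnu,c))\big)\VA^2 .
\end{align*}
Summing over $c\in\BSD_\vnu(C,\varDelta)$ uses the counts of Corollary \ref{cor:lattice_count}, namely $C\varDelta/H+\sqrt C$, together with the $r(c)$-average. The $T N(c)$ term gives the $\frac{Y^2}{HK^2}N(\vnu)$ contribution after dyadic summation in $C$. The $\sqrt C$ part of the count gives $\frac YK\sqrt{N(\vnu)}$. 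The $Y r(c)$ term, using that $\sum_{c} r(c)$ over the region is $\Lt$ count$\cdot Y^{\vepsilon}$, gives $HY/K$. The extra factor $H/\sqrt K$ in \eqref{eq: estimate for P_Delta} should come from the support condition of $\Phi$ when $\varDelta$ is large, where the admissible $x$ lie in windows of relative length $\sqrt K/H$ (curvature of $\gamma$).

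\textbf{Main obstacle.} The hardest step is the uniform stationary-phase analysis of the double integral for $B(x,y)$ when $x,y$ are close, in $\BSD^1_\vnu$ and small $\varDelta$. There the two stationary points in $r$ nearly coalesce ($r_0\to|x-y|$ small), so the $r$-integral must be handled through the $\sin$ factor and the Mehler–Sonine representation \eqref{2eq: integral of J_0}. The first thing to try is to recognize the inner $r$-integral exactly as a $J_0$ of argument $\sqrt{x^2+y^2-2xy\cos t}$, apply \eqref{3eq: B+-, x>1}, and then do a one-variable stationary phase in $t$.
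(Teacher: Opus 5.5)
\textbf{Gap in $\BSD^1_\vnu$.} Your model $B(x,y)\approx \frac{H}{\sqrt{xy}}e^{\pm i(x-y)}\Phi$ fails in $\BSD^1_\vnu$. For $|1-\theta|\Lt H^{\vepsilon}/H$, Lemma \ref{lem: B(x, y), ranges} gives nothing beyond $x,y>K/2$, so $x\approx y$ may lie in or near the transition range $\sqrt{xy}\approx\vkappa$. There Lemma \ref{lem: Olver} (which needs argument $>2\vkappa$) is unavailable, and $J_{\vkappa}(x)$ has size $(x^2-\vkappa^2)^{-1/4}$ or $\vkappa^{-1/3}$ rather than $x^{-1/2}$. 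For instance, take $x=y$ with $x-K\asymp H$; this occurs for $\vnu=1$ and rational $c$. Then $B(x,x)=\sum h(\vkappa)J_{\vkappa}(x)^2\asymp\sqrt{H/K}$, not $H/K$.

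A stationary phase in $t$ in \eqref{4eq: integral expression of B} does not repair this, because $\omega'(t;x,y)\approx\pm\sqrt{xy}$ competes with $K$ precisely in this range. The idea you are missing is that $B$ never has to be evaluated here. Insert \eqref{4eq: integral expression of B} into $P^1_\vnu(\boldsymbol{a};C)$ and pull the $t$-integral outside. For each fixed $t$, treat the bilinear form with $J_0(\beta\sqrt{mn})$, $\beta=4\pi\omega(t;\vnu/c,\vnu'/c')$, by \eqref{3eq: B+-, x>1}, Lemma \ref{lem: Mellin of exp} and Corollary \ref{C cor: Gallagher thm 2}. The amplitude $(\beta Y)^{-1/2}$ of $J_0$ times the Mellin coefficients $\Lt(\beta Y)^{-1/2}$ exactly cancels the length $\beta Y$ of the hybrid large sieve. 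This gives Lemma \ref{lem: estimate of S_v beta} uniformly in $\beta$. Since $\beta\Gt|t|\sqrt{N(\vnu/c)}$, the term $\min\{1/\beta,Y\}$ integrates against $H\,\nd t$ over $|t|\leqslant H^{\vepsilon}/H$ with only a logarithmic loss, and Corollary \ref{cor:lattice_count} finishes.

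Separately, $\sum_c r(c_\vnu)N((\vnu,c))$ over the region is $\Lt C^{1+\vepsilon}$, not $(\text{count})\cdot Y^{\vepsilon}$ as you assert. For $\vnu=1$ the rational $c\sim\sqrt C$ alone contribute $\asymp C$, and it is the bound $C^{1+\vepsilon}$ that produces $HY/K$.

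\textbf{Errors in $\BSD^0_\vnu$ and $\BSD_\vnu(\varDelta)$.} Your $\BSD^0_\vnu$ argument is also incorrect as written. Since $|t|\leqslant H^{\vepsilon}/H$ and $\omega''(t;x,y)\asymp xy/|x-y|\asymp\min(x,y)$ there, a stationary point exists only if $\min(x,y)\Gt HK/H^{\vepsilon}$, not merely $\Gt K$. In that range $B$ is genuinely of size $H/\sqrt{xy}$, since the stationary point contributes $H\cdot x^{-1/2}(\omega'')^{-1/2}$; there is no extra saving $B\Lt H^{\vepsilon}/\sqrt{xy}$. The paper's saving comes instead from the truncation $N(c)\Lt N(\vnu)Y^{2+\vepsilon}/(H^2K^2)$, i.e.\ $H^2$ fewer moduli, combined with \eqref{4eq: bound for B(x, y)} and Weil. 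Summing over the full range \eqref{eq: range of c} with a saving of only $H$ misses \eqref{eq: estimate for P_infty} by a factor $H$.

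In $\BSD_\vnu(\varDelta)$, your handling of the $e^{\pm i(x-y)}$ terms agrees with the paper's $S^{+-}_{\vnu,\vkappa}$ analysis via Lemma \ref{lem: Mellin of w_kappa}: frequencies $\Lt\varDelta X^{1+\vepsilon}/H$, coefficients $\Lt 1/X$, then a trivial $\vkappa$-sum. This requires the $\varDelta$-dependent threshold $x,y\Gt\varDelta K/H^{\vepsilon}$ of Lemma \ref{lem: B(x, y), ranges}(2), not just $K^{1+\vepsilon}$. However, you silently drop the $e^{\pm i(x+y)}$ terms. In the paper these are $S^{++}_{\vnu,\vkappa}$, treated with $\xi^+\Lt S/\sqrt X$ on $|r|\asymp X$. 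They are the actual source of the factor $H/\sqrt K$ (per modulus $H/\sqrt X\Lt H^{1+\vepsilon}/\sqrt{\varDelta K}$), not a curvature window. You must either bound them that way or prove separately that they are negligible unless $\min(x,y)\Gt HK/H^{\vepsilon}$.
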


Theorem \ref{thm: large sieve, average} follows if we choose $\varww \in C_c^\infty(\BR)$ as in \eqref{eq: def of test function h} to be a suitable bump function, multiply $K^2$ to   $S_{\vnu} (\boldsymbol{a}) $ as in \eqref{3eq: S(a)}, and apply the bounds in Proposition \ref{prop: reduction}.

	\section{Integral Representation for Sums of Bessel Products} \label{sec: integral repn}
	
	For  $h \in C_c^\infty(\BR_+)$ and $x, y \in \BR $, consider the sum
	\begin{equation}\label{eq: def of B}
		B(x,y) = \sum_{2 \, \nmid  \, \vkappa }  h(\vkappa) J_{\vkappa}(x) J_{\vkappa}(y),
	\end{equation} 
	which arose partially in the definition of 
	$P_{\vnu} (\boldsymbol{a}) $ as in \eqref{3eq: P(a)}. Keep in mind that in our later sections, 
	\begin{align}\label{4eq: x, y}
		x = \frac{4\pi \vnu \sqrt{  m n} }{ c  }, \qquad y = \frac{4\pi \vnu' \sqrt{  m n} }{ c'  }. 
	\end{align}
	
	\begin{lem}\label{lem: representation of B}
	Define for $t, x, y \in \BR$ 
	\begin{equation}\label{eq: def of omega(t)}
		\omega (t;x,y) = \sqrt{x^2 + y^2 - 2xy\cos t }.
	\end{equation} 
Then 
		\begin{equation}\label{4eq: integral expression of B}
			B(x,y) =   \frac{ 1 }{4 \pi } \int_{- \infty}^{ \infty } \widehat{h}(t)  \big( J_0( \omega ( t;x, y) ) - J_0( \omega ( t;x,-y) )   \big)  \nd t,
		\end{equation}
		where $  \widehat{h}(t) $ is the Fourier transform of $h $ as in {\rm\eqref{2eq: Fourier}}. 
		
	\end{lem}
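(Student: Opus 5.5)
The plan is to bypass the Macdonald integral and use Neumann's addition theorem (the $\nu=0$ case of Graf's addition theorem; see Watson, \S 11.2). For real $x,y,t$ it states
\begin{align*}
	J_0\big(\omega(t;x,y)\big) = \sum_{n\in\BZ} J_n(x) J_n(y) \exp(int) .
\end{align*}
Replacing $y$ by $-y$ and using $J_n(-y) = (-1)^n J_n(y)$ gives the same series with an extra factor $(-1)^n$. Subtracting, the even-order terms cancel, so
\begin{align*}
	J_0(\omega(t;x,y)) - J_0(\omega(t;x,-y)) = 2 \sum_{n \text{ odd}} J_n(x)J_n(y) \exp(int) .
\end{align*}
For negative odd $n$ we have $J_{-n}(x)J_{-n}(y) = (-1)^{2n} J_n(x)J_n(y) = J_n(x)J_n(y)$. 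Pairing $n$ with $-n$ therefore turns the right side into $4\sum_{2\nmid\vkappa,\,\vkappa\geqslant 1} J_\vkappa(x)J_\vkappa(y)\cos(\vkappa t)$.

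Next, I will multiply by $\widehat h(t)/4\pi$ and integrate term by term. Interchanging sum and integral is harmless for the following reasons:
\begin{itemize}
	\item $h\in C_c^\infty(\BR_+)$, so $\widehat h$ is Schwartz.
	\item For fixed $x,y$ the series converges absolutely and uniformly in $t$, since $|J_n(x)|\leqslant (|x|/2)^n/n!$ by \eqref{2eq: prelim bounds} (or directly from the power series).
\end{itemize}
Fourier inversion for \eqref{2eq: Fourier} gives $\int \widehat h(t) \exp(-i\vkappa t)\,\nd t = 2\pi h(\vkappa)$. Hence
\begin{align*}
	\int_{-\infty}^{\infty} \widehat h(t)\cos(\vkappa t)\, \nd t = \pi\big(h(\vkappa)+h(-\vkappa)\big) = \pi h(\vkappa),
\end{align*}
because $h$ vanishes on the negative axis. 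The right side of \eqref{4eq: integral expression of B} is thus
\begin{align*}
	\frac{1}{4\pi}\cdot 4\sum_{2\nmid \vkappa} J_\vkappa(x)J_\vkappa(y)\,\pi h(\vkappa) = B(x,y),
\end{align*}
as claimed.

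The only genuinely nontrivial input is the addition theorem with the correct sign convention. For $x,y>0$, Watson gives $J_0(\varpi)=\sum_n J_n(x)J_n(y)\cos(nt)$, equivalently with $\exp(int)$ by the symmetry $J_{-n}J_{-n}=J_nJ_n$, where $\varpi = \sqrt{x^2+y^2-2xy\cos t}$. The extension to all real $x,y$ follows because both sides are entire, and even, in $(x,y)$ jointly with the same parity bookkeeping; in particular $J_0$ is even, so the choice of square root is irrelevant. I expect this to be the main point needing care.

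If one prefers to stay with the tools already quoted in the paper, an alternative route is available. Insert the Macdonald formula \eqref{2eq: Macdonald}, sum over odd $\vkappa$ via \eqref{eq: Luo_4.1} applied to $J_\vkappa(xy/r)$, and then evaluate the resulting $r$-integral as $J_0(\omega)$ through the Mehler--Sonine representation \eqref{2eq: integral of J_0}. That route, however, requires delicate conditionally convergent oscillatory integrals, so I would use the addition theorem.
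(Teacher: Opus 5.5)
Your proof is correct, but it takes a genuinely different route from the paper.

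\textbf{The paper's route.} It inserts the Macdonald representation \eqref{2eq: Macdonald} for each product $J_\vkappa(x)J_\vkappa(y)$. It then performs the $\vkappa$-sum inside the $r$-integral using the Bessel summation formula \eqref{eq: Luo_4.1}, applied to $J_\vkappa(xy/r)$. Finally, it writes $\sin(xy\cos t/r)$ as a difference of two exponentials, interchanges the $t$- and $r$-integrals, and identifies each $r$-integral as $J_0(\omega(t;x,\pm y))$. For that last step it uses $J_0(\omega)=\frac{1}{2\pi i}\int_{-\infty}^{\infty}\exp(\frac{i}{2}(r+\omega^2/r))\frac{\nd r}{r}$, which follows from the Mehler--Sonine formula \eqref{2eq: integral of J_0}.

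\textbf{Your route.} Neumann's addition theorem says that $t\mapsto J_0(\omega(t;x,\pm y))$ has Fourier coefficients $(\pm 1)^nJ_n(x)J_n(y)$. The lemma then collapses to Fourier inversion of $h$ at the odd integers. Your parity bookkeeping is right: the even $n$ cancel, $n$ pairs with $-n$, and $h(-\vkappa)=0$. The constant $\frac{1}{4\pi}\cdot 4\cdot\pi=1$ also checks out.

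\textbf{What each buys.} Your argument is shorter and fully rigorous. Every series and integral converges absolutely, so Fubini follows at once from $|J_n(x)|\leqslant(|x|/2)^n/n!$ and $\widehat h\in L^1(\BR)$. By contrast, the paper's final $r$-integrals are only conditionally convergent, at both $r\to 0$ and $r\to\infty$. Its interchange of the $t$- and $r$-integrations is passed over as ``a simple calculation''. The paper's route has the merit of using only formulas it has already quoted. In effect it re-derives the $\widehat h$-integrated form of the Fourier expansion you cite.

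\textbf{The point you flag.} This is not a real obstacle. $J_0(\sqrt{w})$ is entire in $w$, and the series converges locally uniformly. So Neumann's formula is an identity between entire functions of $(x,y)$, and it extends from $x,y>0$ to all real $x,y$ automatically.
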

	\begin{proof}
By the Macdonald integral formula in \eqref{2eq: Macdonald},  
\begin{equation*}
	B(x,y) =  \frac{1}{2\pi } \int_{ {-\infty}}^{\infty}  { \exp \bigg( \frac {i} {2} \bigg( {r}  + \frac{x^2 + y^2}{  r}\bigg) \bigg) \Bigg\{\sum_{2 \, \nmid  \, \vkappa } i^{\vkappa+1} h(\vkappa) J_{\vkappa} \Big( \frac{xy}{ r}   \Big) } \Bigg \} \frac{\nd r}{r} .
\end{equation*}
Next, the inner $\vkappa$-sum may be evaluated by \eqref{eq: Luo_4.1}, yielding
\begin{equation*}
	B(x,y) = - \frac{1}{4\pi^2 } \int_{ {-\infty}}^{\infty} \widehat{h}(t)  \int_{ {-\infty}}^{\infty}   \exp \bigg( \frac {i} {2} \bigg( {r}  + \frac{x^2 + y^2}{  r}\bigg) \bigg) \sin \lp \frac {xy \cos t } {r} \rp  \frac{\nd r \nd t}{r} . 
\end{equation*}
Finally, as a consequence of  \eqref{2eq: integral of J_0}, for $\omega \in \BR_+$ we have
\begin{equation*}
	J_0(\omega) = \frac{1}{2 \pi i} \int_{-\infty}^{\infty} \exp \bigg( \frac{i}{2} \bigg( r +  \frac{\omega^2}{r}   \bigg) \bigg) \frac{\nd r}{r} ,
\end{equation*}  from which \eqref{4eq: integral expression of B} follows  by a simple calculation. 
	\end{proof}

\subsection{Bounds for $  \omega (t; x, y) $} 

\begin{lem}\label{lem: omega}
	 Let $ |t | < \delta  $ be small. Then  
	  \begin{align}\label{4eq: bounds omega}
	 	  \omega (t; x, y) \Gt |x  - y |, \qquad  \omega' (t; x, y) \Lt \frac { \delta |xy |  } {|x - y |} ,  
	 \end{align} 
 \begin{align}
 		\label{4eq: bounds omega, 2} \omega^{(j)} (t; x, y) \Lt_j & \frac { {|xy |}  } {|x - y |} \bigg( 1 +  \frac {   \sqrt{|xy |}  } {|x - y |}  \bigg)^{j-2}, 
 \end{align}for any $j \geqslant 1$.

\end{lem}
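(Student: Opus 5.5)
Throughout we assume, as in all later applications (cf. \eqref{4eq: x, y}), that $x, y$ have the same sign; it then suffices to treat $x, y > 0$. We work with $\Omega(t) = \omega(t;x,y)^2 = (x-y)^2 + 2xy(1-\cos t)$, which is a trigonometric polynomial in $t$ and so has trivially controlled derivatives. Two observations drive everything. First, $\Omega \geqslant (x-y)^2$, which gives the lower bound $\omega \Gt |x-y|$ at once (indeed $\omega \geqslant |x-y|$). Second, $\Omega'(t) = 2xy\sin t$, so for $|t|<\delta$ we have $|\Omega'| \leqslant 2xy\delta$, and in general $|\Omega^{(i)}| \leqslant 2xy$ for every $i\geqslant 1$.

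For the first derivative we write $\omega' = \Omega'/(2\omega)$. Combining the two observations gives
\[
|\omega'| \leqslant \frac{xy\delta}{|x-y|},
\]
which is the second bound in \eqref{4eq: bounds omega}.

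For the higher derivatives we apply Faà di Bruno to $\omega = g(\Omega)$ with $g(u) = u^{1/2}$. The $j$-th derivative is a finite linear combination of terms
\[
\Omega^{1/2-r}\prod_{l=1}^{r}\Omega^{(i_l)}, \qquad 1\leqslant r\leqslant j, \quad i_1+\cdots+i_r = j .
\]
Bounding each $\Omega^{(i_l)}$ by $xy$ and $\Omega$ from below by $(x-y)^2$, a term with $r$ factors is
\[
\Lt |x-y|^{1-2r}(xy)^r = \frac{xy}{|x-y|}\bigg(\frac{xy}{(x-y)^2}\bigg)^{r-1}
= \frac{xy}{|x-y|}\bigg(\frac{\sqrt{xy}}{|x-y|}\bigg)^{2(r-1)} .
\]
Here we use that $\Omega^{1/2-r}$ has a negative exponent, so the lower bound on $\Omega$ is the right direction.

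It remains to compare these with the target $\frac{xy}{|x-y|}(1+\sqrt{xy}/|x-y|)^{j-2}$. The exponent $2(r-1)$ can reach $2j-2$, while the target exponent is only $j-2$, so a crude bound per term does not suffice. This exponent bookkeeping is the main obstacle, and it forces us to use cancellation in the $\Omega^{(i)}$.

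The fix is that odd derivatives of $\Omega$ carry a factor $\sin t$. Precisely, $\Omega^{(i)} = \pm 2xy\sin t$ for odd $i$ and $\pm 2xy\cos t$ for even $i$. So any $i_l = 1$ contributes $xy|t| \leqslant xy\delta$, and we pair each such factor with the first-derivative-type gain $\sqrt{\Omega}\Gt |x-y| + \sqrt{xy}\,|t|$. Since $\Omega \asymp (x-y)^2 + xy t^2$, each factor $\Omega'/\Omega$ is then $\Lt \sqrt{xy}/\sqrt{\Omega}$ rather than $xy/\Omega$. Each factor $\Omega^{(i)}$ with $i\geqslant 2$ consumes at least two of the $j$ derivatives. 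Tracking this, a term with $r$ factors of which $s$ have $i_l = 1$ is
\[
\Lt \sqrt{\Omega}\,\bigg(\frac{\sqrt{xy}}{\sqrt{\Omega}}\bigg)^{s}\bigg(\frac{xy}{\Omega}\bigg)^{r-s}, \qquad s + 2(r-s) \leqslant j .
\]
Writing $q = \sqrt{xy}/\sqrt{\Omega} \Lt \sqrt{xy}/|x-y|$, the total exponent of $q$ in this term is $s + 2(r-s) \leqslant j$. Factoring out $\sqrt{\Omega}\,q^2 = xy/\sqrt{\Omega} \leqslant xy/|x-y|$ leaves $q^{\,j-2}$ at worst. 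Bounding $q^{e} \leqslant (1+q)^{j-2}$ for every exponent $e\leqslant j-2$ then gives \eqref{4eq: bounds omega, 2}.

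The one term not covered by this is the $s = r = j = 1$ term, where the exponent of $q$ is $1 < 2$ and the factorisation above does not apply. That term is exactly $\omega'$, which the direct bound $|\omega'|\leqslant xy\delta/|x-y|$ from the first derivative already controls.
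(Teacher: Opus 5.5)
Your route is essentially the paper's: Faà di Bruno for $\omega=\sqrt{\Omega}$, absorbing each $\sin t$ factor through $\omega\geqslant 2\sqrt{xy}\,|\sin(t/2)|$ and bounding the remaining powers of $\omega$ below by $|x-y|$. Your exponent count $2r-s\leqslant j$ is right. It proves \eqref{4eq: bounds omega} and the cases $j\geqslant 2$ of \eqref{4eq: bounds omega, 2}.

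\textbf{The gap is in your final sentence, the case $j=1$.} For $j=1$ the right side of \eqref{4eq: bounds omega, 2} is
\[
\frac{xy}{|x-y|}\Big(1+\frac{\sqrt{xy}}{|x-y|}\Big)^{-1}=\frac{xy}{|x-y|+\sqrt{xy}}\asymp \min\Big\{\frac{xy}{|x-y|},\,\sqrt{xy}\Big\}.
\]
The bound $\delta xy/|x-y|$ does not imply this once $|x-y|<\delta\sqrt{xy}$ (e.g.\ $x=y+1$ with $y$ large), because then $\delta xy/|x-y|>\sqrt{xy}$.

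What is missing is $|\omega'|\leqslant\sqrt{xy}$, and your own inequality already gives it. Indeed $|\Omega'|=4xy|\sin(t/2)\cos(t/2)|\leqslant 2\sqrt{xy}\,\sqrt{\Omega}$, hence $|\omega'|=|\Omega'|/(2\sqrt{\Omega})\leqslant\sqrt{xy}$. Combined with $\delta\leqslant 1$, this gives $\omega'\Lt\min\{\sqrt{xy},\,\delta xy/|x-y|\}$. That is exactly what the paper records before passing to $j\geqslant 2$, and it yields the $j=1$ case.

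\textbf{A smaller point on signs.} Your reason for restricting to $x,y$ of the same sign is not accurate. Lemma \ref{lem: representation of B} involves $J_0(\omega(t;x,-y))$, so opposite signs do occur. The paper also writes out only the same-sign case, and the other case is immediate in your setup. For $xy<0$ one has $\Omega=(x-y)^2-4|xy|\sin^2(t/2)\geqslant (x-y)^2\cos^2(t/2)$ and $|xy|\leqslant (x-y)^2/4$. So the crude term bound $\Omega^{1/2-r}|xy|^r\Lt |x-y|\,(|xy|/(x-y)^2)^r\Lt |xy|/|x-y|$ already suffices. In that regime the factor $(1+\sqrt{|xy|}/|x-y|)^{j-2}$ is $\asymp 1$, so this matches the target.
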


\begin{remark}
	In practice, the case when $x, y$ have different signs is much simpler. Note that {\rm\eqref{4eq: bounds omega}} and {\rm\eqref{4eq: bounds omega, 2}} now read  
	 \begin{align}
		\omega (t; x, y) \Gt |x - y |, \qquad  \omega' (t; x, y) \Lt \frac { \delta |xy |  } {|x - y |} , \qquad \omega^{(j)} (t; x, y) \Lt_j \frac {   |xy |  } {|x - y |}  , 
	\end{align}
	due to $ \sqrt{|xy|} \Lt |x-y| $. % {\rm(}the last bound may  be slightly improved into $ |xy|/|x-y|${\rm)}. 
\end{remark}

\begin{proof}
Consider the case when $ x , y$ have the same sign. 	We have
	\begin{align*}
		\omega (t; x, y) = {  \sqrt{ (x-y)^2 + 4 xy \sin^2 (t/2)}}, \qquad \omega' (t, x, y) =   \frac {x y \sin t} {  \omega (t; x, y)   }, 
	\end{align*}
hence 
\begin{align*}
	\omega' (t; x, y) \Lt \min \bigg\{ \sqrt{|xy|},  \frac { \delta |xy |  } {|x - y |}   \bigg\}. 
\end{align*} 
Moreover, we may prove by induction (or by  the Fa\`a di Bruno formula \cite{Faa-di-Bruno}) that $ \omega^{(j)} (t; x, y) $ is a linear combination of 
\begin{align*}
	\frac { (xy)^{m + n} \sin^{m} t \cos^{n} t } { \omega (t; x, y)^{2m+2n -1 } } , \qquad 0 < m + 2 n \leqslant j . 
\end{align*}
It follows that  
\begin{align*}
	  \omega^{(j)} (t; x, y) & \Lt \frac { |xy| |\sin t |   }  {  |x-y| } +  \mathop{\sum \sum}_{ 1 < m + 2 n \leqslant j } \frac { |xy|^{m+n} |\sin^{m} t |  }  { {|xy|^{m/ 2}}  |\sin^{m} (t/2) | \cdot |x-y|^{m + 2n-1}  } \\
	  & \Lt \frac { |xy| }  {  |x-y| } +  \mathop{\sum \sum}_{ 1 < m + 2 n \leqslant j } \frac { |xy|^{m/2+n} }  {   |x-y|^{m + 2n-1}  }   \\ 
	  & \Lt  \frac {   |xy |  } {|x-y|}   \bigg( 1 + \frac {\sqrt{|xy|}} {|x-y|} \bigg)^{j- 2} ,
\end{align*} 
for any $j \geqslant 2$. 
%The bounds above for $ \omega (t; x, y) $ follow from some direct calculations. 
\end{proof}

\subsection{Further Analysis} 
Next, we need to carefully analyze  the integral representation for $B (x, y)$ in \eqref{4eq: integral expression of B}; in particular, we would like to determine the ranges of $x, y$ on which $B (x, y)$ is not negligibly small. 

Let $K^\upvarepsilon \leqslant H \leqslant K^{1-\vepsilon}$. For our choice of $ h (\vkappa) $   in \S \ref{sec: setup}, 
\begin{equation} \label{eq: h hat h}
	h(\vkappa) = \varww \bigg( \frac{\vkappa - K}{H}\bigg), \qquad \widehat{h}(t) = H \cdot \exp(i Kt) \widehat{\varww}(Ht). 
\end{equation}
Thus the $t$-integral may be effectively truncated at $ |t| = H^{\vepsilon} / H $ at the cost of a negligible error.  Further,  as in  \eqref{3eq: B+-, x>1},  there is also oscillation in  $ J_0( \omega ( t;x, \pm y) ) $.  It follows that $B (x, y)$ splits into four similar integrals, one of which is of the form:
\begin{align}\label{4eq: B++(x,y)}
	B^{+}_{+} (x, y) = H \int_{- H^{\vepsilon} / H}^{H^{\vepsilon} / H} g_{H}^{+} (  t; x, y) \exp \big(i K t + i \omega (t; x, y) \big) \nd t , 
\end{align}
where 
\begin{align}
	g_{H}^{+} (t; x, y) = \frac {\widehat{\varww}(H t)\cdot W_{+} (\omega (t ; x, y)) } {4\pi \sqrt{ \omega (t ; x, y) }} . 
\end{align}
By the bounds in \eqref{3eq: bounds W, x>1}, \eqref{4eq: bounds omega}, and  \eqref{4eq: bounds omega, 2}, it is routine to verify using the product rule and the Fa\`a di Bruno formula \cite{Faa-di-Bruno} that
\begin{align}\label{4eq: g^(j)_+}
	\frac {\partial^j g_{H}^{+  }  (t; x, y)}  {\partial t^j } \Lt_{j}  \frac{1}{  \sqrt{|x - y|} }  \bigg( H + \frac {\sqrt{|xy|}} {|x-y|} \bigg)^{j} , 
\end{align}
provided   $ |x - y | \Gt 1 $.  Now assume that $x , y > K / 2$ say, as $ B (x, y) $ is negligibly small if otherwise (by \eqref{2eq: prelim bounds}).  

\begin{lem}\label{lem: B(x, y), ranges}
	Let $x > y > K/2$. 
	
	{\rm(1)} In the case $x / y > 3$,   $ B (x, y) $ is negligibly small unless 
	\begin{equation*}
		x,y \Gt   {H K } / {H^\upvarepsilon} .
	\end{equation*}

 {\rm (2)}
In the case  $ x/y - 1  \sim \varDelta/H $ {\rm(}for $ H^{\vepsilon'} \! \leqslant \varDelta \leqslant H ${\rm)}, $B(x,y)$ is negligibly small unless
\begin{equation*}
	x,y \Gt   {\varDelta K} / {H^{\upvarepsilon  }}  .
\end{equation*}
\end{lem}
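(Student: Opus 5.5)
We work with the four integrals into which $B(x,y)$ splits, one of which is the integral $B^{+}_{+}(x,y)$ in \eqref{4eq: B++(x,y)}, and bound each by repeated integration by parts in $t$. The total phase is $Kt \pm \omega(t;x,\pm y)$; we treat the same-sign case $\omega(t;x,y)$ in detail, since the case $\omega(t;x,-y)$ is easier by the Remark after Lemma \ref{lem: omega}. The plan is to show that on $|t|<H^{\vepsilon}/H$ the derivative of the phase is bounded below by $\asymp K$, unless $x,y$ exceed the stated thresholds. We then apply the standard non-stationary phase lemma, using the amplitude bounds \eqref{4eq: g^(j)_+}.

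\textbf{Step 1: lower bound for the phase derivative.} Apply Lemma \ref{lem: omega} with $\delta = H^{\vepsilon}/H$. This gives
\[
  |\omega'(t;x,y)| \Lt \frac{H^{\vepsilon}\, xy}{H\,(x-y)}.
\]
\begin{itemize}
\item In case (1), $x>3y$, so $x-y\asymp x$ and $xy/(x-y)\asymp y$. Hence $|\omega'| \Lt H^{\vepsilon} y/H$, which is $\le K/2$ unless $y \Gt HK/H^{\vepsilon}$. Since $x>y$, the same lower bound then holds for $x$.
\item In case (2), $x-y \asymp y\varDelta/H$ and $xy \asymp y^2$. Hence $|\omega'| \Lt H^{\vepsilon} y/\varDelta$, which is $\le K/2$ unless $y\Gt \varDelta K/H^{\vepsilon}$.
\end{itemize}
In the complementary range we therefore have $|K \pm \omega'(t)| \ge K/2$ uniformly in $t$. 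We replace $H^{\vepsilon}$ here by a slightly larger power $H^{2\vepsilon}$ to obtain the margin needed in the statement.

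\textbf{Step 2: bounds for the higher derivatives.} For $j\ge 2$ we need $\omega^{(j)}$ to be controlled relative to $K$. By \eqref{4eq: bounds omega, 2},
\[
  \omega^{(j)} \Lt \frac{xy}{x-y}\Big(1+\frac{\sqrt{xy}}{x-y}\Big)^{j-2}.
\]
\begin{itemize}
\item In case (1) this is $\Lt y$.
\item In case (2) this is $\Lt (yH/\varDelta)(H/\varDelta)^{j-2}$.
\end{itemize}
In the Fourier-transform scale $t \asymp 1/H$ these are the relevant quantities. Likewise the amplitude has $t$-derivatives of size $U^{-j}$ with $U^{-1} = H + \sqrt{xy}/(x-y)$; this equals $\asymp H$ in case (1) and $\asymp H/\varDelta$ in case (2), so $U \Gt 1/H$.

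We then invoke the integration-by-parts lemma (in the form of Blomer--Khan--Young): with phase-derivative lower bound $R=K$, amplitude scale $U$, and phase-derivative scale $Q$ with $\omega^{(j)}\Lt \Upsilon/Q^{j}$, the integral is $O_A\big(\text{length}\cdot [(QR/\sqrt{\Upsilon})^{-A} + (RU)^{-A}]\big)$. We have $RU \Gt K/H \ge K^{\vepsilon}$ by the assumption $H\le K^{1-\vepsilon}$, which gives the negligibility.

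\textbf{Main obstacle.} The hardest part is the second-derivative term in case (2), where $\omega''$ can be as large as $yH/\varDelta$. Its size matters relative to $K^2$, not $K$. In the small range $y \Lt \varDelta K/H^{\vepsilon}$ one has $\Upsilon \approx yH/\varDelta$ and $Q \approx \varDelta/H$, so
\[
  \frac{Q^2R^2}{\Upsilon} \asymp \frac{\varDelta K^2}{yH} \Gt \frac{K H^{\vepsilon}}{H} \ge K^{\vepsilon}.
\]
I expect to have to organize this bookkeeping carefully, checking each $j$ through Fa\`a di Bruno, and possibly to shrink $\vepsilon$ relative to $\vepsilon'$. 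For $\varDelta$ near $H^{\vepsilon'}$ one must also use $x-y \Gt 1$ (true since $y > K/2$), so that \eqref{4eq: g^(j)_+} is applicable.
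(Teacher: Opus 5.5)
Your proposal is correct and follows the paper's proof essentially verbatim. You get the lower bound $|f'(t)|\Gt K$ from Lemma \ref{lem: omega} with $\delta=H^{\vepsilon}/H$, then apply the non-stationary phase Lemma \ref{lem: staionary phase, dim 1} with $R=K$, amplitude scale $1/H$, and $Q=1$ or $\varDelta/H$; the paper takes $Z=HK$ or $\varDelta^2K/H$. There are two harmless slips, neither affecting the conclusion: in case (2) the amplitude scale is $U^{-1}=H+H/\varDelta\asymp H$ (not $H/\varDelta$), and with $Q=\varDelta/H$ the matching phase parameter is $\Upsilon=y\varDelta/H$ (so that $\Upsilon/Q^2\asymp yH/\varDelta$ is the size of $\omega''$), which is what your final ratio $\varDelta K^2/(yH)$ actually uses.
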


\begin{proof}
	 Let us consider only $ B^{+}_{+} (x, y) $ in \eqref{4eq: B++(x,y)}, as the other three integrals may be treated similarly. Now up to the factor $1/2\pi$ the phase function of the integral reads
	 $$ f (t) = K t + \omega(t;x,y). $$
	 By \eqref{4eq: bounds omega} in Lemma \ref{lem: omega}, with $\delta = H^\upvarepsilon / H$,  we have
	\begin{align*}
		  |\omega'(t;x,y)| \Lt \frac{\delta xy }{ x-y } \Lt \left\{  \begin{aligned}
		  	 &   {H^{\vepsilon} y} / {H}, & &  \text{ if } x/y > 3,   \\ 
		  	 &  {H^{\vepsilon} y} / {\varDelta}, & & \text{ if }  x/y - 1  \sim \varDelta/H.   
		  \end{aligned} \right. 
	\end{align*}
Thus  \begin{align*}
	 f' (t) = K +\omega' (t; x, y) \Gt K, 
\end{align*}  respectively for $  y \Lt HK / H^{\vepsilon} $ or  $ y \Lt \varDelta K / H^{\vepsilon} $ in the first   or  the second case. 
For  $y$ in such ranges, in view of the bounds in   \eqref{4eq: bounds omega, 2} and \eqref{4eq: g^(j)_+},  we conclude that $ B^{+}_{+} (x, y)  $ is negligibly small by applying Lemma \ref{lem: staionary phase, dim 1} with $R  = K$, $P = 1/H$, $Q = 1$ or $\varDelta / H $, and $Z = HK $ or $\varDelta^2 K / H  $.  Note here that 
\begin{align*}
	 \frac {\sqrt{xy }} { x-y } %= \frac  {\sqrt{x/y}} {x/y-1} 
	\Lt  \left\{  \begin{aligned}
	 	&  1, & &  \text{ if } x/y > 3,   \\ 
	 	&   H / {\varDelta}, & & \text{ if }  x/y - 1  \sim \varDelta/H.  \end{aligned} \right. 
\end{align*}
\end{proof}

Lemma \ref{lem: B(x, y), ranges} manifests that $x, y > K/ 2$ is strengthened at least into $ x,  y > K^{1+\vepsilon} $ (in (2) choose  $\vepsilon' = 2 \vepsilon$, say) in case that $x$ and $ y$ are not too close. This will enable us to apply Olver's asymptotic formula for $ J_{\vkappa} (x) $ and $J_{\vkappa} (y) $ simultaneously beyond the transition range at $\vkappa$ as in Lemma \ref{lem: Olver}.

%\section{Bounds for Bessel Products} 

%Next, let us consider the problem of bounding the Bessel product $ J_{\vkappa} (x) J_{\vkappa} (y)  $. 

\subsection{Bounds for $B (x, y)$} For $|x-y| \Gt 1$, if trivial estimation were applied to \eqref{4eq: B++(x,y)}, we would only get 
\begin{align*}
	B^{+}_{+} (x, y) \Lt   H \int_{- H^{\vepsilon} / H}^{H^{\vepsilon} / H} \frac {\nd t} {  \sqrt{ |x-y| + \sqrt{xy} |t| }} \Lt H^{\vepsilon} \min \bigg\{ \frac {1} {\sqrt{|x-y|}} , \frac {\sqrt{H}} {\sqrt[4]{xy}}
	\bigg\}  .
\end{align*} 
For  $ x, y > K^{1+\vepsilon} $, however, this is worse than the following simple bound obtained trivially  from \eqref{2eq: bound, Olver} and the definition in \eqref{eq: def of B}:
\begin{align}\label{4eq: bound for B(x, y)}
	 B (x, y) \Lt \frac {H} {\sqrt{xy}}. 
\end{align}

	\section{\texorpdfstring{Estimates for $P_\vnu^{0} (\boldsymbol{a})$}{Estimates for P\unichar{"2070}\unichar{"03BD}(a)}  }\label{sec: Estimate for P^infty}
	
By the definitions in \eqref{3eq: P(a)}, \eqref{3eq: D infty}, and \eqref{eq: def of B}, we have
	\begin{equation*}
	P_\vnu^{0} (\boldsymbol{a}) =  \mathop{\sum \sum}_{m,n \sim Y}  a_m \overline{a}_n  \sum_{ c\, \in \BSD_\vnu^{0} / \RU^2  } \frac{  S_{\bfF} (\vnu m, \vnu n;c)  }{N(c)}  B   \bigg( \frac{4\pi \vnu \sqrt{m n} }{c } ,\frac{4\pi \vnu' \sqrt{m n} }{c' } \bigg) . 
\end{equation*}
By Lemma \ref{lem: B(x, y), ranges} (1), the $c$-sum may be further truncated and restricted to the range 
\begin{align*}
	N(c) \Lt \frac {N(\vnu) Y^{2+\upvarepsilon} }{H^2 K^2 } ,
\end{align*}
so, compared to  \eqref{eq: range of c}, we have saved  $   H^2$ for the length of summation! By the Weil bound in \eqref{2eq: Weil} and the bound in \eqref{4eq: bound for B(x, y)},  trivial estimation yields
\begin{align*}
	 P_\vnu^{0} (\boldsymbol{a}) & \Lt Y^{\vepsilon}  \mathop{\sum \sum}_{m,n \sim Y}  | a_m \overline{a}_n |  \sum_{ (c) : N (c / \vnu) \Lt   {  Y^{2+\upvarepsilon} }  / {H^2 K^2 }  } \frac { \sqrt{ N ((\vnu m, \vnu n, c)) } } {\sqrt{N (c)}} \cdot   \frac{H \sqrt{N(c)} }{ Y \sqrt{  N(\vnu) } } \\
	 & \Lt Y^{\vepsilon} \frac{H }{ Y   } \frac {\sqrt{N(\vnu)} Y^2} {H^2 K^2}   \mathop{\sum \sum}_{m,n \sim Y} \tau (   (\vnu m, \vnu n)) \cdot  | a_m \overline{a}_n |   \\
	 & \Lt   \frac { Y^2 } {H K^2} \sqrt{N(\vnu)} Y^{\vepsilon}  \VA^2, 
\end{align*}
as in \eqref{eq: estimate for P_infty}.

\section{Mellin Technique} 

For the estimations of $P_{\vnu}^{1}(\boldsymbol{a})$ and $P_{\vnu}(\boldsymbol{a};\varDelta)$ in the next two sections, the variant of Gallagher's hybrid large sieve will be applied, so the Mellin inversion will be required  to separate the entangled variables $m$ and $n$ in $\sqrt{mn}$. To this end, let us establish here two lemmas by analyzing certain Mellin integrals.  

\begin{lem}\label{lem: Mellin of exp}
	Let $S, X>0$. Let $\varww (x ) \in C^{\infty} [X/2,  4 X]$ % {\rm(}not necessarily compactly supported{\rm)} 
	satisfy $ \varww^{(j)}(x) \Lt_j S / X^j $. Then for $x \sim X$, we have
	\begin{equation}
		\exp(i x) \varww (x)   = \int_{-\infty}^{\infty} \xi(r) x^{ i r} \nd r,
	\end{equation}
	where $\xi(r)$ has bound $\xi(r) \Lt S (1 + |r| + X )^{-A}$ for any $A > 0$ unless $ r  \asymp X$, in which case $\xi(r) \Lt S /\sqrt {1 + X }$.
\end{lem}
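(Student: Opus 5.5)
The plan is to obtain $\xi$ by Mellin inversion. Since the identity is only claimed for $x \sim X$, we may first multiply by a smooth cutoff. Take $\rho \in C_c^\infty$ supported in $[X/2, 4X]$, equal to $1$ on $[X, 2X]$, with $\rho^{(j)} \Lt_j X^{-j}$. Set $F(x) = \exp(ix)\varww(x)\rho(x)$. This is smooth and compactly supported in $(0,\infty)$, so Mellin inversion on the line $\mathrm{Re}\, s = 0$ gives
\begin{align*}
F(x) = \int_{-\infty}^{\infty} \xi(r) x^{ir}\, \nd r, \qquad \xi(r) = \frac{1}{2\pi}\int_0^\infty \exp(ix)\varww(x)\rho(x)\, x^{-ir}\, \frac{\nd x}{x},
\end{align*}
and $F = \exp(ix)\varww$ for $x \sim X$. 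It remains to bound the oscillatory integral
\begin{align*}
\xi(r) = \frac{1}{2\pi}\int u(x)\exp\big(i(x - r\log x)\big)\, \nd x, \qquad u(x) = \frac{\varww(x)\rho(x)}{x},
\end{align*}
where $u$ is supported on $x \asymp X$ and satisfies $u^{(j)} \Lt_j (S/X) X^{-j}$.

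\textbf{Step 1 (trivial bound).} Directly, $|\xi(r)| \Lt (S/X)\cdot X = S$. This already gives the claim when $X \Lt 1$ and $|r| \Lt 1$, since then $(1+|r|+X)^{-A} \asymp 1$.

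\textbf{Step 2 (non-stationary range).} The phase is $\phi(x) = x - r\log x$, so $\phi'(x) = 1 - r/x$ and $\phi^{(j)}(x) \asymp_j |r| x^{-j}$ for $j \geqslant 2$. If $r$ is not $\asymp X$ (say $r < X/8$ or $r > 8X$, including negative $r$), then on the support $|\phi'(x)| \Gt \max\{1, |r|/X\}$. Repeated integration by parts, via the first-derivative test as in Lemma \ref{lem: staionary phase, dim 1} or directly, saves a factor $\max\{1, |r|/X\}^{-1}\cdot(1/X + \ldots)$ per step. More precisely, each integration by parts gains $(X\max\{1,|r|/X\})^{-1} = \min\{X^{-1}, |r|^{-1}\}$. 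One has to check that derivatives of $u/\phi'$ behave: $(1/\phi')^{(j)} \Lt (X \max\{1,|r|/X\})^{-1}\cdots$, which holds since $\phi'' \asymp |r|/X^2$ relative to $|\phi'|/X$. This gives $\xi(r) \Lt S\,(X + |r|)^{-A}$, which combined with Step 1 yields $S(1+|r|+X)^{-A}$. When $X \Lt 1$ and $|r|$ is large, integrate by parts using the $r\log x$ term (still $|\phi'| \asymp |r|/X$); when $X \Lt 1$, every $r$ with $|r| \Lt 1$ is covered by Step 1 anyway, and "$r \asymp X$" then falls in the trivial regime.

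\textbf{Step 3 (stationary range, $r \asymp X$, $X \Gt 1$).} This is the only delicate point. There is a unique stationary point $x_0 = r$ with $\phi''(x_0) = r/x_0^2 \asymp 1/X$. The second-derivative test (van der Corput) gives $\xi(r) \Lt \|u\|_\infty \,(1/X)^{-1/2}$ plus the total-variation term, that is $\Lt (S/X)\sqrt{X} = S/\sqrt{X}$. Alternatively, use the standard stationary phase lemma, which gives a leading term of size $u(x_0)\,|\phi''(x_0)|^{-1/2} \asymp S/\sqrt X$. Combined with Step 1 for bounded $X$, this gives $\xi(r) \Lt S/\sqrt{1+X}$.

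The main obstacle is the bookkeeping in Step 2 near the edges of the range $r \asymp X$, where $|\phi'|$ is bounded below only by a constant. For this it suffices to fix the thresholds $1/8$ and $8$ so that $|1 - r/x| \geqslant 1/2$ on the support $[X/2,4X]$.
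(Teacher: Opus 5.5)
Your proposal is correct and follows essentially the same route as the paper. Both insert a smooth cutoff to make $\varww$ compactly supported in $[X/2,4X]$, apply Mellin inversion, and use the trivial bound $\xi(r)\Lt S$. Both then apply the first-derivative test (Lemma \ref{lem: staionary phase, dim 1}) when $r\not\asymp X$ and the second-derivative test (Lemma \ref{lem: 2nd derivative}) when $r\asymp X$. Your explicit check of $|\phi'|$ and of the derivatives of $1/\phi'$ supplies details the paper leaves as ``an easy consequence.''
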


	\begin{proof}
As $x$ is restricted to   $[X , 2 X]$, we may assume that $ \varww (x) \in C_c^{\infty} [X/2,  4 X] $ is compactly supported by multiplying a suitable $ \varvv (x) \in C_c^{\infty} [X/2, 4 X]  $ with $ \varvv (x)  \equiv 1 $ on $[X, 2 X]$.  	
Now by Mellin inversion,  
	\begin{equation*}
		\xi(r) = \frac{1}{2\pi} \int_{0}^{\infty}   \exp(i x - i r \log x) \varww (x)   \frac{\nd x}{x}.
	\end{equation*}
Note that trivially $ \xi (r) \Lt S $.	The first bound for $\xi (r)$  is an easy consequence of Lemma \ref{lem: staionary phase, dim 1}. % with phase function $f (x) = x + r \log x$. 
	The bound $ \xi (r) \Lt S /\sqrt{1 + X} $ follows from the second derivative test in Lemma \ref{lem: 2nd derivative}. 
\end{proof}

	\begin{lem}\label{lem: Mellin of w_kappa}
	Define 
	\begin{equation}\label{6eq: def of gamma_kappa}
		\gamma_{\vkappa}(x)=\sqrt{x^{2}-\vkappa^{2}}-\vkappa \operatorname{arcsec}(x/\vkappa).
	\end{equation}
	Let $X > \vkappa^{1+\upvarepsilon}$ and $1 - \theta   \sim \delta$ with $1/\vkappa \Lt \delta \Lt 1/\vkappa^\upvarepsilon$. Let $S$ and $\varww (x ) $ be as in Lemma \ref{lem: Mellin of exp}. Then for $x \sim X$, we have
	\begin{align}\label{6eq: lem: applying Mellin}
	\exp(  i\gamma_{\vkappa}( x) \pm i\gamma_{\vkappa}(\theta x)) \varww (x )   = \int_{-\infty}^{\infty} \xi^{\pm} (r)  x^{i r} \nd r, 
	\end{align}
where $\xi^+(r)$ or $\xi^-(r)$ respectively is negligibly small unless $r  \asymp X$ or  $ \delta X$, in which case 
\begin{align}
	\xi^{+} (r) \Lt \frac S {\sqrt{X}}, \qquad \xi^{-} (r) \Lt S. 
\end{align}
\end{lem}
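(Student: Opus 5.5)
We follow the proof of Lemma \ref{lem: Mellin of exp}. Since $x$ is restricted to $[X,2X]$, we first replace $\varww$ by a compactly supported function on $[X/2,4X]$ with the same derivative bounds. Mellin inversion then gives
\begin{equation*}
\xi^{\pm}(r) = \frac{1}{2\pi}\int_0^\infty \exp\big(i\Phi^{\pm}(x)\big)\,\varww(x)\,\frac{\nd x}{x}, \qquad \Phi^{\pm}(x)=\gamma_{\vkappa}(x)\pm\gamma_{\vkappa}(\theta x)-r\log x .
\end{equation*}
Note that $\theta x\asymp X>\vkappa^{1+\vepsilon}$, so both $\gamma_\vkappa(x)$ and $\gamma_\vkappa(\theta x)$ are evaluated well beyond the transition point. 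The whole argument then reduces to derivative estimates for the phase $\Phi^{\pm}$, combined with Lemma \ref{lem: staionary phase, dim 1} for the negligibility claims and Lemma \ref{lem: 2nd derivative} for the size bounds. The trivial bound $\xi^{\pm}(r)\Lt S$ is immediate, and it already gives the stated bound for $\xi^-$.

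\textbf{Step 1: derivative formulas.} A direct computation from \eqref{6eq: def of gamma_kappa} gives
\begin{equation*}
\gamma_\vkappa'(x)=\frac{\sqrt{x^2-\vkappa^2}}{x}=\sqrt{1-\vkappa^2/x^2}.
\end{equation*}
Hence $x\gamma_\vkappa'(x)=\sqrt{x^2-\vkappa^2}$, and the derivative of the $\theta x$ term is $\theta\gamma_\vkappa'(\theta x)$, so $x\,\frac{\nd}{\nd x}\gamma_\vkappa(\theta x)=\sqrt{\theta^2x^2-\vkappa^2}$. Therefore
\begin{equation*}
x\,\Phi^{\pm\prime}(x)=\sqrt{x^2-\vkappa^2}\pm\sqrt{\theta^2x^2-\vkappa^2}-r .
\end{equation*}
For the plus sign, the first two terms sum to $\asymp X$, since $x\sim X\gg\vkappa$. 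For the minus sign, the first two terms equal
\begin{equation*}
\frac{(1-\theta^2)x^2}{\sqrt{x^2-\vkappa^2}+\sqrt{\theta^2x^2-\vkappa^2}}\asymp \delta X,
\end{equation*}
using $1-\theta\sim\delta$ and $X>\vkappa^{1+\vepsilon}$.

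\textbf{Step 2: negligibility off the critical range.} If $r$ is not $\asymp X$ (resp. $\asymp\delta X$), then $|x\Phi'|\Gt |r|+X$ (resp. $\Gt |r|+\delta X$). The higher derivatives satisfy $x^j\Phi^{(j)}\Lt X+|r|$ for the plus case. For the minus case, one checks from the expansion that the difference of the two square-root terms behaves like $\delta\cdot x$ times a smooth function of $\vkappa/x$, giving $x^j\Phi^{(j)}\Lt \delta X+|r|$. We then apply Lemma \ref{lem: staionary phase, dim 1} with $P=X$ and $R=(|r|+\delta X)/X$ (or $(|r|+X)/X$). This yields $O_A\big(S(1+|r|+\delta X)^{-A}\big)$, which is negligible because $\delta X\Gt X/\vkappa>\vkappa^{\vepsilon}$.

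\textbf{Step 3: bounds in the critical range and the main obstacle.} For $\xi^+$ we need a second-derivative lower bound. Differentiating $x\Phi^{+\prime}$ gives
\begin{equation*}
(x\Phi^{+\prime})'=\frac{x}{\sqrt{x^2-\vkappa^2}}+\frac{\theta^2x}{\sqrt{\theta^2x^2-\vkappa^2}}\asymp 1 .
\end{equation*}
When $r\asymp X$ we have $\Phi^{+\prime}=O(1)$ near the stationary point, so $x^2\Phi^{+\prime\prime}=x(x\Phi^{+\prime})'-x\Phi^{+\prime}\asymp X$. Thus $|\Phi^{+\prime\prime}|\asymp 1/X$ there, and possibly after localizing with a smooth partition around the stationary point, Lemma \ref{lem: 2nd derivative} together with the amplitude $\varww(x)/x\Lt S/X$ and length $X$ gives $\xi^+(r)\Lt (S/X)\cdot\sqrt{X}=S/\sqrt X$.

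The delicate point is making the second-derivative lower bound and the phase derivative bounds in the minus case uniform in $\theta$. This requires the Taylor/smooth-function representation of $\sqrt{x^2-\vkappa^2}-\sqrt{\theta^2x^2-\vkappa^2}$ as $(1-\theta)x\cdot G(\vkappa/x,\theta)$ with $G$ smooth and bounded on the relevant range, which holds because $\vkappa/x<\vkappa^{-\vepsilon}$. I expect verifying these derivative bounds cleanly to be the main technical step. For $\xi^-$, however, no saving is claimed, so only the negligibility part of Step 2 needs them.
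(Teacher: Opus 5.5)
Your proposal is correct and follows essentially the same route as the paper: Mellin inversion, the same formula for $x\Phi^{\pm\prime}$, Lemma \ref{lem: staionary phase, dim 1} with $P=Q=X$ and $R=1+|r|/X$ or $\delta+|r|/X$ for the negligibility, Lemma \ref{lem: 2nd derivative} for $\xi^+$, and the trivial bound for $\xi^-$. Your higher-derivative bounds $X+|r|$ and $\delta X+|r|$ are cruder than the paper's $\vkappa^2/X+|r|$ and $\delta\vkappa^2/X+|r|$ but still suffice. In Step 3 no localization is needed, and ``$\Phi^{+\prime}=O(1)$'' alone would not rule out cancellation; expanding your identity gives $x^2\Phi^{+\prime\prime}=\vkappa^2/\sqrt{x^2-\vkappa^2}+\vkappa^2/\sqrt{\theta^2x^2-\vkappa^2}+r$, whose terms are all positive for $r\asymp X$, so $\Phi^{+\prime\prime}\geqslant r/x^2\Gt 1/X$ on the whole support, exactly as in the paper.
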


\begin{proof}
	As in the proof of Lemma  \ref{lem: Mellin of exp}, we may assume with no loss of generality that $ \varww (x) \in C_c^{\infty} [X/2,  4 X] $. By Mellin inversion, 
	 we have
	\begin{equation*}
		\xi^{\pm}(r) = \frac{1}{2\pi} \int_0^\infty \exp(  i\gamma_{\vkappa}( x) \pm i\gamma_{\vkappa}(\theta x) - ir\log x ) \varww (x) \frac{\nd x}{x} .
	\end{equation*}
	The phase function here (up to the factor $1/2\pi$) equals
	\begin{equation*}
		f_{\pm}(x;r) =  \gamma_\vkappa( x) \pm \gamma_\vkappa( \theta  x  ) - r\log x.
	\end{equation*}
	By direct calculations,  
	\begin{equation*}
		f_{\pm}'(x;r) = \frac{\sqrt{ x^2 - \vkappa^2} \pm \sqrt{ \theta^2 x^2 - \vkappa^2} - r}{x}, 
	\end{equation*}
	\begin{equation*}
		f_{\pm}''(x;r) = \frac{1}{x^2} \bigg( \frac{\vkappa^2}{\sqrt{x^2 - \vkappa^2} } \pm \frac{\vkappa^2}{\sqrt{\theta^2  x^2 - \vkappa^2} } + r \bigg) ,
	\end{equation*}
	and 
	\begin{equation*}
	x^j	f_{+}^{(j)}(x;r) \Lt_j \frac{\vkappa^2}{x } +  {|r|}  , \qquad x^j f_{-}^{(j)}(x;r) \Lt_j \frac{ \delta \vkappa^2}{x } +  {|r|} ,
	\end{equation*}
for every $j \geqslant 2$. Note that
\begin{align*}
	f_{\pm}'(x;r) = (1 \pm \theta ) (1+ O (1/\vkappa^{\vepsilon})) - r / x   . 
\end{align*} Then we can apply Lemma \ref{lem: staionary phase, dim 1} with $R= 1 + |r|/X$,
$ P = Q = X$,  and $ Z = \vkappa^2/X + |r| $ 
to deduce that $\xi^+(r)$ is negligibly small unless $ r \asymp X$. The bound $\xi^+(r) \Lt S/ \sqrt{X}$ follows readily  from Lemma \ref{lem: 2nd derivative}. Similarly, we apply Lemma \ref{lem: staionary phase, dim 1} with $R= \delta + |r|/X$, 
$ P = Q = X$, and $ Z = \delta \vkappa^2/X + |r| $
to deduce that $\xi^-(r)$ is negligibly small unless $r \asymp \delta X$. The bound $\xi^-(r) \Lt S$ however is trivial. 
\end{proof}

	\section{\texorpdfstring{Estimates for $P_\vnu^1(\boldsymbol{a})$}{Estimates for P\unichar{"00B9}\unichar{"03BD}(a) } }\label{sec: Estimate for P^1}
	
After the partition in \S \ref{sec: partition}, for $ C \Lt  N(\vnu) Y^2/K^2$ let us consider the sum 
	\begin{equation}\label{7eq: P_1}
		P_\vnu^1 (\boldsymbol{a}; C) =  \mathop{\sum \sum}_{m,n \sim Y}  a_m \overline{a}_n  \sum_{ c\, \in \BSD_\vnu^{1} (C) / \RU^2  } \frac{  S_{\bfF} (\vnu m, \vnu n;c)  }{N(c)}  B   \bigg( \frac{4\pi \vnu \sqrt{m n} }{c } ,\frac{4\pi \vnu' \sqrt{m n} }{c' } \bigg). 
	\end{equation}
See \eqref{3eq: P(a)}, \eqref{3eq: D(C)}, \eqref{3eq: D 1},  \eqref{3eq: D1(C), D(C, Delta)}, and \eqref{eq: def of B}.  

%For $C \Lt H^2 / K^{\vepsilon}$, similar to the case of $P_\vnu^{0} (\boldsymbol{a})$, we shall estimate the sum trivially. For $ H^2 / K^{\vepsilon} \Lt C \Lt N(\vnu) Y^2/K^2 $ (it was assumed in \eqref{3eq: conditions}  that $ H K \leqslant Y $), we shall apply the integral representation for $B (x, y)$ and then the variant of Gallagher's hybrid large sieve under the integral. 

\delete{%\subsection{Trivial Estimation for $C  $ Small}  Note that $ x , y > K^{1+\vepsilon} $ is satisfied for $ C \Lt H^2 / K^{\vepsilon} $ (see \eqref{4eq: x, y}). By the Weil bound in \eqref{2eq: Weil} and the bound in \eqref{4eq: bound for B(x, y)}, along with Corollary \ref{cor:lattice_count}, trivial estimation yields
\begin{align*}
	\begin{aligned}
		P_{\vnu}^1 (\boldsymbol{a}; C) & \Lt Y^{\vepsilon}  \mathop{\sum \sum}_{m,n \sim Y}  | a_m \overline{a}_n |  \sum_{ c\, \in \BSD_\vnu^{1} (C) } \frac { \sqrt{ N ((\vnu m, \vnu n, c)) } } {\sqrt{N (c)}} \cdot   \frac{H \sqrt{N(c)} }{ Y \sqrt{  N(\vnu) } } \\
	& \Lt Y^{\vepsilon} \frac{H  }{ Y \sqrt{  N(\vnu) }  }   \bigg( \frac{C}{H} + \sqrt{C} \bigg)   \mathop{\sum \sum}_{m,n \sim Y} \tau (   (\vnu m, \vnu n)) \cdot  | a_m \overline{a}_n |  , 
	\end{aligned}
\end{align*}
and it follows from $ C \Lt H^2 / K^{\vepsilon} $ that
\begin{align}%\label{7eq: estimate for P_1 for C small}
	 P_\vnu^{0} (\boldsymbol{a}; C) \Lt \frac { H^2 } {\sqrt{N(\vnu)} }   Y^{\vepsilon}  \VA^2. 
\end{align}}
	
	\subsection{Application of the Integral Representation}

		%Next we consider the case $ H^2 / K^{\vepsilon} \Lt C \Lt N(\vnu) Y^2/K^2 $. 
		By inserting \eqref{4eq: integral expression of B} into \eqref{7eq: P_1}, we see that $ P_\vnu^1(\boldsymbol{a}, C)$ splits into two similar integrals, one of which reads 
	\begin{equation}\label{7eq: integral representation of P_1}
		P_{\vnu}^{+}(\boldsymbol{a};C) = \frac{1}{4\pi } \int_{- \infty}^{\infty}  \widehat{h}(t)  S_{\vnu} \big( t  ; \boldsymbol{a}; C \big)   \nd t,
	\end{equation}
	where 
	\begin{align}\label{7eq: S (C)}
	S_{\vnu} (t  ; \boldsymbol{a}; C) =	\sum_{ c\, \in \BSD_\vnu^{1}(C) / \RU^2  }  \frac{ S_\vnu  ( 4\pi \omega  (t, \vnu / c , \vnu' /c'   )   ; c , \boldsymbol{a}  )  }{N(c) }   , 
	\end{align}
and 
	\begin{equation}
		S_\vnu (\beta; c , \boldsymbol{a}) = \mathop{\sum \sum}_{m,n \sim Y}  a_m \overline{a}_n  S_{\bfF} (\vnu m, \vnu n ;c)  J_0(\beta \sqrt{mn}).
	\end{equation}
	
	\subsection{Application of the Hybrid Large Sieve} 
Let us now estimate the bilinear form $S_\vnu (\beta; c , \boldsymbol{a})$ with the aid of the hybrid large sieve in Corollary \ref{C cor: Gallagher thm 2}. Note that a similar one over $\BZ$ was studied in \cite[Proposition 3]{D-I-Kuz}. 
	
	\begin{lem}\label{lem: estimate of S_v beta}
		Let $c\in \BSD_\vnu^1 $ {\rm (}see \eqref{3eq: D 1}{\rm )} and $\vnu \in \SO^+$. Then for any $\beta \in \BR_+ $, we have
		\begin{equation}\label{eq: lem: estimate of S_v beta}
			S_\vnu (\beta; c , \boldsymbol{a}) \Lt_{\vepsilon}  \big( N(c) + \min\{1/\beta , Y  \} \cdot r(c_{\vnu} ) N((\vnu,c))  \big) Y^\upvarepsilon \VA^2 . 
		\end{equation}
%	with the implied constant independent on $\beta$. 
	\end{lem}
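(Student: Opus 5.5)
We will separate the variables $m,n$ in two steps. First we open the Kloosterman sum; then we express $J_0(\beta\sqrt{mn})$ as a Mellin integral, so that the bilinear form becomes an average of products of two linear forms. We then apply Corollary \ref{C cor: Gallagher thm 2} to each linear form via Cauchy--Schwarz.

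Opening \eqref{2eq: Kloosterman}, we have
\begin{align*}
S_\vnu (\beta; c , \boldsymbol{a}) = \sumx_{\valpha (\mathrm{mod}\, c)} \mathop{\sum \sum}_{m,n \sim Y} a_m \overline{a}_n\, e_{\bfF}\Big[\frac{\vnu m \valpha}{c}\Big] e_{\bfF}\Big[\frac{\vnu n \widebar{\valpha}}{c}\Big] J_0(\beta\sqrt{mn}).
\end{align*}
We split into two cases.

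\emph{Case $\beta Y \Lt 1$.} Here $J_0(\beta\sqrt{mn})$ is a smooth function of $\sqrt{mn}$ with no oscillation, and all of its derivatives are controlled. Write $J_0(\beta u)=\int \xi_0(r)u^{ir}\nd r$ for $u\sim Y$; this is a smooth Mellin inversion as in Lemma \ref{lem: Mellin of exp} with $X\asymp \beta Y\Lt 1$. Then $\xi_0(r)\Lt (1+|r|)^{-A}$, so the $r$-integral is effectively over $|r|\Lt Y^{\vepsilon}$. For each $r$ the double sum factors as $\sum_m a_m m^{ir/2}e_{\bfF}[\vnu m\valpha/c]\cdot\sum_n \overline a_n n^{ir/2} e_{\bfF}[\vnu n\widebar\valpha/c]$. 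We apply Cauchy--Schwarz in $(\valpha,r)$, using that $\valpha\mapsto\widebar\valpha$ permutes the reduced residues, and extend to all $\valpha$ by positivity. Corollary \ref{C cor: Gallagher thm 2} with $T=Y^{\vepsilon}$ then gives the bound $(N(c)+Y r(c_\vnu)N((\vnu,c)))Y^{\vepsilon}\VA^2$. This matches \eqref{eq: lem: estimate of S_v beta}, since $\min\{1/\beta,Y\}=Y$ in this range.

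\emph{Case $\beta Y\Gt 1$.} We use \eqref{3eq: B+-, x>1}:
\[
J_0(\beta u)=(\beta u)^{-1/2}\big(e^{i\beta u}W_+(\beta u)+e^{-i\beta u}W_-(\beta u)\big).
\]
We apply Lemma \ref{lem: Mellin of exp} to $x=\beta u$, $X\asymp\beta Y$, with weight $\varww(x)=x^{-1/2}W_\pm(x)$ and $S\asymp(\beta Y)^{-1/2}$. The resulting $\xi(r)$ is negligible unless $|r|\asymp \beta Y$, where it is $\Lt S/\sqrt{\beta Y}= 1/(\beta Y)$. Since $(\beta\sqrt{mn})^{ir}=\beta^{ir}m^{ir/2}n^{ir/2}$, the variables separate. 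Cauchy--Schwarz in $(\valpha,r)$ as before, followed by Corollary \ref{C cor: Gallagher thm 2} with $T\asymp\beta Y$, bounds the sum by
\[
\frac{1}{\beta Y}\big(\beta Y N(c)+Y r(c_\vnu)N((\vnu,c))\big)Y^{\vepsilon}\VA^2 = \big(N(c)+\beta^{-1}r(c_\vnu)N((\vnu,c))\big)Y^{\vepsilon}\VA^2,
\]
which is the claimed bound. The negligible tails of $\xi$ contribute trivially; the factor $N(c)$ absorbs the trivial count of residues against the $Y^{-A}$ saving.

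The main obstacle is the bookkeeping in the second case. We must check that the Mellin weight has total mass $\int|\xi|\nd r\Lt 1/(\beta Y)\cdot\beta Y=O(1)$, while Cauchy--Schwarz uses the pointwise size of $\xi$. Concretely, we bound $\int|\xi(r)||L_1(r)||L_2(r)|\nd r\le \sup|\xi|\,(\int|L_1|^2)^{1/2}(\int|L_2|^2)^{1/2}$ over $|r|\Lt\beta Y$, so the sup-bound $1/(\beta Y)$ exactly cancels the factor $T=\beta Y$ in the large sieve. We also need the weight to satisfy the hypotheses of Lemma \ref{lem: Mellin of exp}, namely $\varww^{(j)}\Lt S/X^j$; this follows from \eqref{3eq: bounds W, x>1}. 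The condition $c\in\BSD_\vnu^1$ is not needed beyond fixing $c$.
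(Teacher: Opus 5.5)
Your proposal is correct and follows essentially the paper's route. The paper also splits at $\beta Y\asymp 1$. In the large case it uses \eqref{3eq: B+-, x>1} and Lemma~\ref{lem: Mellin of exp} to obtain a Mellin weight of size $1/(\beta Y)$ on $|r|\le \beta Y^{1+\varepsilon}$. It then opens the Kloosterman sum and applies Cauchy--Schwarz and Corollary~\ref{C cor: Gallagher thm 2} with $T\asymp\beta Y^{1+\varepsilon}$.

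The only difference is in the case $\beta Y\ll 1$, and it is harmless. You Mellin-invert the non-oscillating $J_0(\beta u)$ directly. The paper first writes $J_0(x)=\frac{1}{\pi}\int_0^{\pi}\cos(x\sin\theta)\,\mathrm{d}\theta$ to reduce to exponentials, and only then invokes Lemma~\ref{lem: Mellin of exp}.
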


\begin{proof}
	Let us first consider the case $\beta \geqslant 1/Y$. By \eqref{3eq: B+-, x>1}, we split $S_\vnu (\beta; c , \boldsymbol{a}) $ into the sum of $ S_\vnu^{\pm} (\beta; c , \boldsymbol{a})  $ given by 
	\begin{equation*}
		S_\vnu^{\pm} (\beta; c , \boldsymbol{a}) =   \mathop{\sum \sum}_{m,n \sim Y}  \frac{a_m \overline{a}_n}{\sqrt{\beta} \sqrt[4]{mn} } S_{\bfF} (\vnu m, \vnu n ;c)  \exp(\pm i \beta \sqrt{mn}) W_{\pm}(\beta \sqrt{mn}).
	\end{equation*}
Now Lemma \ref{lem: Mellin of exp} yields, up to a negligibly small error, the expression  
	\begin{equation*}
		S_\vnu^{\pm} (\beta; c , \boldsymbol{a}) = \int_{- \beta Y^{1+\vepsilon}}^{ \beta Y^{1+\vepsilon} }  \xi_{\pm} (r) \mathop{\sum \sum}_{m,n \sim Y}  \frac{a_m \overline{a}_n }{\sqrt{\beta} \sqrt[4]{mn} } S_{\bfF} (\vnu m, \vnu n ;c)   (\beta \sqrt{mn})^{i r} \nd r,
	\end{equation*}
	where on the integral domain $\xi_{\pm} (r) = O (1/\sqrt{\beta Y})$. Opening the Kloosterman sum as in \eqref{2eq: Kloosterman}, it follows from Cauchy that
	\begin{equation*}
		S_\vnu (\beta; c , \boldsymbol{a}) \Lt \frac 1 {\beta Y} \int_{- \beta Y^{1+\vepsilon}}^{ \beta Y^{1+\vepsilon} } \sumx_{\valpha (\mod\, c)} \bigg| \sum_{n \sim Y} a_n n^{ir/2} e_{\bfF}\Big[ \frac{\vnu n \valpha}{c}\Big] \bigg|^2   \nd r .
	\end{equation*}
	Finally an application of the large sieve inequality   \eqref{eq: cor: large sieve} in Corollary \ref{C cor: Gallagher thm 2} leads to   
	\begin{equation*}
		S_\vnu (\beta; c , \boldsymbol{a}) \Lt  \bigg( N(c) + \frac{r(c_{\vnu}) N((\vnu,c))}{\beta} \bigg) Y^\upvarepsilon \VA^2 . 
	\end{equation*}
%and hence \eqref{eq: lem: estimate of S_v beta} due to $\beta > 1/Y$. 
	For the easier case $\beta < 1/Y$, we use the Bessel integral representation %(alternatively, we may also use the Taylor expansion of $ J_0 (x) $)
	\begin{align*}%\label{2eq: Bessel's integral}
		J_{0}(x) = \frac{1}{\pi} \int_{0}^{\pi} \cos(  x \sin\theta) \nd \theta, 
	\end{align*}
to rewrite 
	\begin{equation*}
		S_\vnu (\beta; c , \boldsymbol{a}) = \frac{1}{2\pi} \int_{-\pi}^{\pi} \mathop{\sum \sum}_{m,n \sim Y}  a_m \overline{a}_n  S_{\bfF} (\vnu m, \vnu n ;c)  \exp(i \beta \sqrt{mn} \sin\theta) \nd \theta. 
	\end{equation*}
	Then we deduce from the same argument above by Lemma \ref{lem: Mellin of exp} and Corollary \ref{C cor: Gallagher thm 2} that  
	\begin{align*}
		S_\vnu (\beta; c , \boldsymbol{a}) & \Lt \int_{-Y^{\vepsilon}}^{Y^{\vepsilon}}  \sumx_{\valpha (\mod\, c)} \bigg| \sum_{n \sim Y} a_n n^{ir/2} e_{\bfF}\Big[ \frac{\vnu n \valpha}{c}\Big] \bigg|^2   \nd r \\
		& \Lt \big(   N(c) + Y r(c_{\vnu}) N((\vnu,c)) \big) Y^{\vepsilon} \VA^2, 
	\end{align*}
as desired. 
\end{proof}

\begin{coro}\label{cor: S(t)}
Let $ t \Lt 1$ be small.	Then
	 \begin{align}\label{7eq: S(t)}
	 	S_{\vnu} ( t ; \boldsymbol{a}; C) \Lt_{\vepsilon}  \bigg( \frac {C} {H }   + \sqrt{C} + \frac {  \sqrt{C}} {\sqrt{t^2 N(\vnu)}} \bigg) Y^\upvarepsilon \VA^2 . 
	 \end{align}
% for any $ C \Lt N(\vnu) Y^2/K^2 $. 
\end{coro}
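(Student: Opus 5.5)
We apply Lemma \ref{lem: estimate of S_v beta} to each summand of \eqref{7eq: S (C)} with $\beta = 4\pi \omega(t; \vnu/c, \vnu'/c')$, and then sum over $c \in \BSD_\vnu^1(C)/\RU^2$, using Corollary \ref{cor:lattice_count} to count the $c$'s and dividing by $N(c) \asymp C$.

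First we need a lower bound for $\beta$ that involves $t$. Put $x = \vnu/c$ and $y = \vnu'/c'$. Both are positive and of size $\asymp \sqrt{N(\vnu)}/\sqrt{C}$, because $c, c' \asymp \sqrt C$; the analogous statement for $\vnu, \vnu'$ is arranged by the choice of representative modulo units, or else enters only through $xy = N(\vnu)/N(c)$. From the formula in the proof of Lemma \ref{lem: omega},
\begin{align*}
\omega(t;x,y)^2 = (x-y)^2 + 4xy \sin^2(t/2) \geqslant 4xy\sin^2(t/2) \asymp t^2 \, \frac{N(\vnu)}{N(c)},
\end{align*}
for $|t| \Lt 1$. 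Hence
\begin{align*}
\min\{1/\beta, Y\} \leqslant \frac1\beta \Lt \frac{\sqrt{C}}{|t|\sqrt{N(\vnu)}}.
\end{align*}

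Inserting this into \eqref{eq: lem: estimate of S_v beta} gives, for each $c$,
\begin{align*}
\frac{S_\vnu(\beta; c, \boldsymbol{a})}{N(c)} \Lt \bigg(1 + \frac{r(c_\vnu) N((\vnu,c))}{C} \cdot \frac{\sqrt C}{\sqrt{t^2 N(\vnu)}}\bigg) Y^{\vepsilon} \VA^2.
\end{align*}
Summing the first term over $c$ and applying Corollary \ref{cor:lattice_count} yields $(C H^{\vepsilon}/H + \sqrt C)\,Y^\vepsilon \VA^2$, which accounts for the first two terms of \eqref{7eq: S(t)}.

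The main obstacle is the second term. We must show that
\begin{align*}
\sum_{c \in \BSD_\vnu^1(C)} r(c_\vnu) N((\vnu,c)) \Lt C^{1+\vepsilon} \,\text{(up to $N(\vnu)^\vepsilon$)},
\end{align*}
because then the total contribution is $\sqrt{C}/\sqrt{t^2N(\vnu)}$ times $Y^\vepsilon \VA^2$. The lattice count alone does not suffice, since $r(c_\vnu)$ can be as large as $\sqrt C$. To handle this, we write $c = r\, c_0$ with $r = r(c)$ and $c_0$ primitive, and first bound $r(c_\vnu) N((\vnu,c)) \leqslant r(c)\, N((\vnu,c))$ up to a divisor factor. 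For fixed $r$ and fixed $\mathfrak d = (\vnu, c)$, the number of $c$ with $N(c) \sim C$, $r \mid c$, $\mathfrak d \mid c$ is $\Lt C/(r^2 N(\mathfrak d)) + \sqrt{C}/r + 1$. Multiplying by the weight $r N(\mathfrak d)$ and summing over $r \leqslant \sqrt C$ and over $\mathfrak d \mid \vnu$ gives $\Lt C^{1+\vepsilon} N(\vnu)^{\vepsilon}$, which is enough.

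Here we drop the restriction to $\BSD_\vnu^1$, which is legitimate since it only removes terms, and this costs nothing in the final bound. We have to be careful that the $+1$ terms and the $\sqrt C/r$ terms also sum to $O(C^{1+\vepsilon})$; both do, because $r \leqslant \sqrt C$ and $N(\mathfrak d) \leqslant N(\vnu)$. The harmless factors $N(\vnu)^\vepsilon$ and $C^\vepsilon$ are absorbed into $Y^\vepsilon$, since $C \Lt N(\vnu)Y^2/K^2$ and $N(\vnu)^\vepsilon < K$.
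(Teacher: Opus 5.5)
Your lower bound $1/\beta\Lt\sqrt C/(|t|\sqrt{N(\vnu)})$, your treatment of the first term by Corollary \ref{cor:lattice_count}, and dropping the $\BSD^1_\vnu$ restriction in the second term are all fine and match the paper. The gap is in the second term, which carries the entire $N(\vnu)$-dependence of \eqref{7eq: S(t)}. There your argument loses genuine powers of $N(\vnu)$, not $N(\vnu)^{\vepsilon}$.

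First, replacing $r(c_\vnu)$ by $r(c)$ discards exactly the cancellation you need. Your count $\Lt C/(r^2N(\mathfrak d))+\sqrt C/r+1$ hides this, because it treats the conditions $r\mid c$, $\mathfrak d\mid c$ as if they were $(r)\mathfrak d\mid c$. That is false when $(r)$ and $\mathfrak d$ are not coprime. For instance, let $\vnu=p$ be an inert rational prime. Every $c$ with $p\mid c$ is $c=pc_1$ with $c_1\in\BSD(C/p^2)$, $(\vnu,c)=(p)$ and $r(c)=p\,r(c_1)$. So for $C\Gt p^2$,
\begin{align*}
\sum_{c\in\BSD(C)} r(c)\,N((\vnu,c))\geqslant p^3\sum_{c_1\in\BSD(C/p^2)} r(c_1)\Gt pC=C\sqrt{N(\vnu)} .
\end{align*}
Hence the intermediate bound $\Lt C^{1+\vepsilon}N(\vnu)^{\vepsilon}$ you aim for is false. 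The true weight $r(c_\vnu)N((\vnu,c))=r(c)\,p$ is smaller than yours by exactly this factor $p$.

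Second, even granting your count, the secondary terms do not sum as you say. We have $\sum_{\mathfrak d\mid\vnu}\sum_{r\leqslant\sqrt C} rN(\mathfrak d)\,(\sqrt C/r+1)\asymp C\sum_{\mathfrak d\mid\vnu}N(\mathfrak d)\geqslant C N(\vnu)$. The facts $r\leqslant\sqrt C$ and $N(\mathfrak d)\leqslant N(\vnu)$, which you cite, are precisely what produce this full factor $N(\vnu)$. As written, your argument gives the last term of \eqref{7eq: S(t)} as roughly $\sqrt{CN(\vnu)}/|t|$ instead of $\sqrt C/\sqrt{t^2N(\vnu)}$.

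The missing idea is to keep the exact weight $r(c_\vnu)$ and change variables to $c_\vnu$, so that the factor $N((\vnu,c))$ is paid for by the shrinking of the box. Write $(c)=(\delta)(c_\vnu)$ with $(\delta)=(\vnu,c)$, and choose $\delta$ with $\delta,\delta'\asymp\sqrt{N(\delta)}$. Then $c_\vnu,c_\vnu'\asymp\sqrt{C/N(\delta)}$, and the ratio $\vnu'c/\vnu c'$ is unchanged under $(\vnu,c)\mapsto(\vnu/\delta,c/\delta)$. So $c_\vnu\in\BSD^1_{\vnu/\delta}(C/N(\delta))$ and
\begin{align*}
\sum_{c\in\BSD^1_\vnu(C)} r(c_\vnu)\,N((\vnu,c))\Lt\sum_{(\delta)\supset(\vnu)}N(\delta)\sum_{c_1\in\BSD^1_{\vnu/\delta}(C/N(\delta))} r(c_1) .
\end{align*}
Set $A=C/N(\delta)$, write $c_1=rc_0$, and apply Corollary \ref{cor:lattice_count} at scale $A/r^2$. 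This gives $\sum_{c_1}r(c_1)\leqslant\sum_{r\Lt\sqrt A}r\cdot\#\BSD^1_{\vnu/\delta}(A/r^2)\Lt\sum_{r\Lt\sqrt A}(A/r+\sqrt A)\Lt A\log A$. Now both the range of $r$ and the boundary term live at the reduced scale, $N(\delta)$ cancels against $A$, and the total is $\Lt\tau(\vnu)\,C\log C$. This is the paper's argument, and it yields \eqref{7eq: S(t)}.
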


\begin{proof}
As in the definition of $S_{\vnu} ( t ; \boldsymbol{a}; C)$ in \eqref{7eq: S (C)}, 
\begin{align*}
	\beta = 4\pi \omega  (t, \vnu / c , \vnu' /c'   ). 
\end{align*} Recall that
\begin{align*}
		\omega (t; x, y) = {  \sqrt{ (x-y)^2 + 4 xy \sin^2 (t/2)}},
\end{align*}
so  
\begin{align*}
	\frac 1 {\beta} \Lt \frac 1 {\sqrt{t^2 N (\vnu/c)}} .  
\end{align*}	By \eqref{7eq: S (C)} and \eqref{eq: lem: estimate of S_v beta} in Lemma \ref{lem: estimate of S_v beta}, 
	\begin{align*}
		S_{\vnu} (t ; \boldsymbol{a}; C) & \Lt Y^\upvarepsilon \VA^2 \sum_{ c\,\in \BSD_\vnu^{1}(C)  } \frac 1 {N(c)} \bigg(  N(c) + \frac { r(c_{\vnu}) N((\vnu,c))} {\sqrt{t^2 N (\vnu/c)}   }   \bigg) \\
		&  \Lt Y^\upvarepsilon \VA^2 \Bigg( \sum_{ c\,\in \BSD_\vnu^{1}(C)  }   1 +   \frac {  1 } { \sqrt{ t^2 C N(\vnu)   }} \mathop{\sum_{(\delta) \supset (\vnu)}}_{N (\delta) \Lt C}  N (\delta) \sum_{ c\,\in \BSD_{\vnu/\delta}^{1}(C / N (\delta) )  }  {r(c)  } \Bigg)  . 
	\end{align*}
The first sum yields  $ O  \big ((C   / H +\sqrt{C}) Y^\upvarepsilon \VA^2 \big)$ by   Corollary \ref{cor:lattice_count}. 
Note that  
\begin{align*}
	\sum_{ c\,\in \BSD_\mu^{1}( A )  }  {r(c)  } \Lt \sum_{ r \Lt \sqrt{A} } r \cdot  \text{\small \bf \#} \BSD_\mu^{1}( A/ r^2 )   \Lt \sum_{ r \Lt \sqrt{A} } \bigg(\frac {H^{\vepsilon} A} {H r}   + \sqrt{A}  \bigg) \Lt A ,  
\end{align*}
by   Corollary \ref{cor:lattice_count}, so the second sum yields $O \big( \sqrt{C / t^2 N(\vnu)} Y^{ \vepsilon} \VA^2 \big)$. %Thus the bound in \eqref{7eq: S(t)} follows as $C \Lt N(\vnu) Y^2/K^2$. 
	\end{proof}

	\subsection{Conclusion}
In view of \eqref{eq: h hat h} and \eqref{7eq: integral representation of P_1}, 
\begin{align*}
	P_{\vnu}^{+}(\boldsymbol{a};C) \Lt   {H}   \int_{- H^{\vepsilon} / H}^{H^{\vepsilon} / H}  \big|	S_{\vnu} (t ; \boldsymbol{a}; C) \big| \nd t . 
\end{align*}
As a direct consequence,  Corollary \ref{cor: S(t)} yields  
\begin{align*}
	P_{\vnu}^{+}(\boldsymbol{a};C) \Lt \bigg( \frac {C} {H }   + \sqrt{C} + \frac { H \sqrt{C}} {\sqrt{  N(\vnu)}}   \bigg) Y^\upvarepsilon \VA^2, 
\end{align*}
and hence the bound for $P_\vnu^1(\boldsymbol{a})$ as in \eqref{eq: estimate for P_v^1} because $ C \Lt  N(\vnu) Y^2/K^2$.

	\section{\texorpdfstring{Estimates for $P_\vnu(\boldsymbol{a};\varDelta)$}{Estimates for P\unichar{"03BD}(a;\unichar{"0394})}  }\label{sec: Estimate for P_Delta}
	
	\subsection{Application of Olver's Asymptotic}
	In view of  \eqref{2eq: gamma}, \eqref{Beq: Olver}, \eqref{3eq: P(a)}, \eqref{3eq: D(C)}, \eqref{3eq: D Delta}, \eqref{3eq: D1(C), D(C, Delta)}, \eqref{eq: def of B}, and Lemma \ref{lem: B(x, y), ranges} (2), up to a negligible error, we rewrite 
	\begin{equation}\label{6eq: P_Delta after Olver}
		P_\vnu(\boldsymbol{a}; C, \varDelta) = \sum_{2\, \nmid \, \vkappa} h(\vkappa)  \sum_{ c\,\in \BSD_\vnu (C, \varDelta) / \RU^2} \frac{ S_{\vnu, \vkappa} (c ;\boldsymbol{a}) }{N(c)}   , 
	\end{equation}
where $S_{\vnu, \vkappa}(c ;\boldsymbol{a})$ is the sum of four similar bilinear forms $S_{\vnu, \vkappa}^{\pm \pm} (c;\boldsymbol{a})$, two of which read 
\begin{align}\label{8eq: P+-}
	S_{\vnu, \vkappa}^{+ \pm } (c;\boldsymbol{a}) = \mathop{\sum \sum}_{m,n \sim Y} \! a_m \overline{a}_n S_{\bfF} (\vnu m, \vnu n ;c) V_\vkappa^{+\pm} \bigg( \frac{4\pi \vnu \sqrt{m n} }{c } \bigg) , 
\end{align}
with $\vkappa$ close to $K$, 
\begin{align}
	 V_\vkappa^{ + \pm} (x) = \exp( i\gamma_{\vkappa}( x) \pm i\gamma_{\vkappa}(\theta x)) \varvv_{ \pm}  (x; \theta ), \qquad \varvv_{ \pm}  (x; \theta ) = \frac{V_{+}(  x / \vkappa) V_{\pm}(\theta x/\vkappa )}{\sqrt{\theta}x},
\end{align}
and $\gamma_{\vkappa}(x)$   given by \eqref{6eq: def of gamma_kappa}.  
Recall from \eqref{3eq: theta} and \eqref{3eq: D Delta} that 
\begin{align*}
	\theta =   {\vnu' c  } / {\vnu c '  }, \qquad 1 -\theta  \sim \varDelta / H.
\end{align*}
As $\theta $ is near $1$, it follows from \eqref{2eq: V+-} that 
\begin{align}\label{8eq: bounds for v(x)}
	x^j \varvv_{\pm}^{(j)} (x; \theta ) \Lt_{j} \frac 1 {x} . 
\end{align}
It is crucial that Lemma \ref{lem: B(x, y), ranges} (2) provide us   the restriction
\begin{align}\label{8eq: range of C}
	 C \Lt \frac{N(\vnu) Y^{2} H^{\vepsilon} }{ \varDelta^2 K^2}.  
\end{align}

	\subsection{Application of the Hybrid Large Sieve}
In view of \eqref{8eq: P+-}--\eqref{8eq: range of C},   we may now apply Lemma \ref{lem: Mellin of w_kappa} with $X = 4\pi \vnu Y / c \asymp \sqrt{N(\vnu/ c)} Y \Gt  \varDelta K/H^\upvarepsilon$, $\delta = \varDelta / H$,  and $S=1/X$, open the Kloosterman sum, and use the Cauchy inequality to prove that
\begin{equation*}
	S_{\vnu, \vkappa}^{ + \pm} (c;\boldsymbol{a})   \Lt \int_{- X_{\pm} }^{X_{\pm}} |\xi^{\pm}(r)| \  \sumx_{ \valpha (\mod\, c) } \bigg| \sum_{n\sim Y} a_n n^{i r/2} e_{\bfF}\Big[ \frac{ \vnu n \valpha}{c}\Big] \bigg|^2 \nd r, 
\end{equation*}
up to a negligible error, where 
\begin{align*}
	X_+ = X^{1+\vepsilon}, \qquad X_- = \frac {\varDelta X^{1+\vepsilon} } H, 
\end{align*}
and 
\begin{align*}
	\xi_{+} (r) \Lt \frac 1 {X \sqrt{X}} , \qquad \xi_{-} (r) \Lt \frac 1 {X } . 
\end{align*}
By  the large sieve inequality   \eqref{eq: cor: large sieve} in Corollary \ref{C cor: Gallagher thm 2}, we have 
\begin{align*}
	S_{\vnu, \vkappa}^{+ +} (c;\boldsymbol{a})  \Lt   \frac {X N(c) + Y r (c_{\vnu}) N((\vnu, c))} {X \sqrt{X}}   Y^\upvarepsilon \VA^2  ,
\end{align*}
\begin{align*}
	S_{\vnu, \vkappa}^{ + - } (c;\boldsymbol{a})  \Lt   \frac {\varDelta X N(c)/ H  + Y r (c_{\vnu}) N((\vnu, c))} {X  }   Y^\upvarepsilon \VA^2 , 
\end{align*}
and hence
\begin{align}\label{8eq: P(c; a)}
	 S_{\vnu, \vkappa}^{+ +} (c;\boldsymbol{a}) + S_{\vnu, \vkappa}^{ + -} (c;\boldsymbol{a})  \Lt \! \bigg( \! \bigg(\frac{1}{\sqrt{\varDelta K}} + \frac{\varDelta  }{H } \bigg) N(c) + \frac{Y r(c_{\vnu}) N((\vnu, c)) }{ \varDelta K } \bigg) Y^{\vepsilon} \VA^2. 
\end{align}

\subsection{Conclusion} Next we borrow some arguments from the proof of Corollary \ref{cor: S(t)}.  Combining \eqref{6eq: P_Delta after Olver} and \eqref{8eq: P(c; a)}, we infer that
\begin{align*}
	 P_\vnu(\boldsymbol{a};C,\varDelta)   \Lt H Y^{\vepsilon} \VA^2 \! \sum_{ c\,\in \BSD_\vnu (C,\varDelta)} \frac 1 {N(c)} \bigg( \! \bigg(\frac{1}{\sqrt{\varDelta K}} + \frac{\varDelta  }{H } \bigg) N(c) + \frac{Y r(c_{\vnu}) N((\vnu, c)) }{ \varDelta K } \bigg) , 
\end{align*}
hence $P_\vnu(\boldsymbol{a};C,\varDelta)$ is bounded by the sum of 
\begin{align*}
	  Y^{\vepsilon} \VA^2   \bigg(\frac{H}{\sqrt{\varDelta K}} +  {\varDelta  }  \bigg) \sum_{ c\,\in \BSD_\vnu (C,\varDelta)}  1 ,
\end{align*}
and 
\begin{align*}
  Y^{\vepsilon} \VA^2 \cdot	 \frac {H Y} {C \varDelta K} \mathop{\sum_{(\delta) \supset (\vnu)}}_{N (\delta) \Lt C}  N (\delta) \sum_{ c\,\in \BSD_{\vnu/\delta}^{1}(C / N (\delta), \varDelta )  }  {r(c)  } . 
\end{align*}
	The former has bound   \begin{align*}
		  \bigg(\frac{H}{\sqrt{\varDelta K}} +  {\varDelta  }  \bigg) \bigg( \frac {C \varDelta}  H +\sqrt{C} \bigg) Y^\upvarepsilon \VA^2  , 
	\end{align*}  by   Corollary \ref{cor:lattice_count}. 
	Note that  
	\begin{align*}
		\sum_{ c\,\in \BSD_\mu^{1}( A , \varDelta)  }  {r(c)  } \Lt \sum_{ r \Lt \sqrt{A} } r \cdot  \text{\small \bf \#} \BSD_\mu^{1}( A/ r^2, \varDelta )   \Lt \sum_{ r \Lt \sqrt{A} } \bigg(\frac {\varDelta A} {H r}   + \sqrt{A}  \bigg) \Lt A \log A ,  
	\end{align*}
	by   Corollary \ref{cor:lattice_count}, so the latter  sum has bound 
\begin{align*}
	 \frac {H Y} {  \varDelta K} Y^\upvarepsilon \VA^2.  
\end{align*}
Since   $C \Lt {N(\vnu) Y^{2} H^{\vepsilon} } / { \varDelta^2 K^2}$  (see \eqref{8eq: range of C}), while $\varDelta$ ranges between  $H^{\vepsilon} $ and   $ H$, we conclude that 
\begin{align*}
	 P_\vnu(\boldsymbol{a};C,\varDelta)  \Lt \bigg(  \bigg(\frac{H}{\sqrt{  K}} +  1  \bigg) \bigg(\frac {Y^2} {HK^2} N(\vnu)+ \frac {Y} {K} \sqrt{N(\vnu)} \bigg) + \frac {HY} {K} \bigg) Y^\upvarepsilon \VA^2, 
\end{align*}
	as desired by \eqref{eq: estimate for P_Delta}.

		\section{Proof of Theorem \ref{thm: 2nd moment}}
	\subsection{Asai's $L$-functions}\label{sec: application}
	Recall that Asai's $L$-function $L (s, \mathrm{As}(f))$ is defined by
	\begin{equation*}
		L(s,\As(f) ) = \zeta(2s) \sum_{m \in \BZ_{+}}  \lambda_f(m) m^{-s}, \qquad \text{($\mathrm{Re} (s)>1$)}.
	\end{equation*}
	Asai \cite{Asai-1977} proved that $L(s,\As(f) ) $ has  analytic continuation to the whole $s$-plane with a possible simple pole at $s =  1$, and satisfies the functional equation
	\begin{equation*}
		\Lambda (s,\As(f) ) = \Lambda (1-s,\As(f) ),
	\end{equation*}
	where $\Lambda (s, \As(f) ) = d_{\bfF}^{s/2} \gamma (s, \As(f) ) L (s,\As(f) )$, with 
	\begin{equation*}
		 \gamma (s, \As(f) ) = (2\pi)^{-2s} \Gamma(s+k-1) \Gamma(s) .
	\end{equation*} 
Moreover, if we write
\begin{equation*}
	L(s,f) = \prod_{ \pi } \big( 1 - \valpha_f(\pi) N(\pi)^{-s}\big)^{-1} \big( 1 - \beta_f(\pi) N(\pi)^{-s}\big)^{-1},
\end{equation*}
where  $\pi $ denote primes in $ \SO^{+} /\RU^2 $, then
\begin{equation*}
	L (s,\As(f) ) =  \prod_p L_p(s,\As(f)), 
\end{equation*}
where
\begin{equation*}
	L_p^{-1}(s,\As(f))=
	\begin{cases}
		(1-\valpha_f(\pi_1)\valpha_f(\pi_2)p^{-s})(1-\valpha_f(\pi_1)\beta_f(\pi_2)p^{-s}) & \text{if } p \sim \pi_1\pi_2, \\
	\! 	\cdot (1-\beta_f(\pi_1)\valpha_f(\pi_2)p^{-s})(1-\beta_f(\pi_1)\beta_f(\pi_2)p^{-s}), & \ \ \, (\pi_1\not\sim \pi_2); \\
		(1-\valpha_f(\pi)p^{-s})(1-\beta_f(\pi)p^{-s})(1-p^{-2s}), & \text{if }  p \sim \pi; \\
		(1-\valpha_f^2(\pi)p^{-s})(1-\beta_f^2(\pi)p^{-s})(1-p^{-s}), & \text{if }  p \sim \pi^2. 
	\end{cases}
\end{equation*}
%For $h_{\bfF}^+ = 1$,  in the last case we have $ p = D $ and $\pi = \sqrt{D}$ (see \S \ref{sec: basic defn}). 

\subsection{Convoluted Asai $L$-functions} For $f \in S_k (\SL_2(\SO))$, assume that its Asai lift $\As(f)$ is cuspidal.  Let $ \phi \in S_{\frac 1 4 + \vvkappa^2} (\SL_2(\BZ)) $ be a Hecke--Maass cusp form of Laplace eigenvalue $1/4+\vvkappa^2$.  Let $\lambda_{\phi} (n)$ denote its Hecke eigenvalues.  The  analytic continuation and functional equation of $ L (s, \As(f)\times \phi) $ have been verified in  \cite{Ramakrishnan2002}. It is known that $ L (s, \As(f)\times \phi) $ is entire and satisfies
\begin{align}\label{8eq: FE} 
	\Lambda(s, \As(f)\times \phi) =   \Lambda(1-s, \As(f)\times \phi) , 
\end{align}
where  
the completed $L$-function $\Lambda(s, \As(f)\times \phi) = d_{\bfF}^{s} \gamma(s, \As(f)\times \phi) L(s, \As(f)\times \phi) $, and the  $\gamma$-factor 
\begin{align}\label{8eq: gamma}
	\gamma(s, \As(f)   \times \phi) \! = \! (2\pi)^{-4s}  \Gamma  (s +k-1-i\vvkappa  ) \Gamma  (s +k-1+i\vvkappa  ) \Gamma  (s  -i\vvkappa  ) \Gamma  (s  + i\vvkappa  )   .  
\end{align}
%Observe that \eqref{8eq: epsilon} and \eqref{8eq: gamma} indeed coincide with the formulae for  $ h \times h \times \phi   $ with $ h \in S_k (\SL_2(\BZ))$ and may be readily verified by the argument for $ \mathrm{Sym}^2 h \times \phi  $ as in \cite[\S  4.2.4]{Ho-Mu-Qi} (see Remark \ref{rem: QxQ}). 

\begin{remark}
	Note that if $\phi \in S_{2l} (\SL_2 (\BZ))$ were a holomorphic cusp form, then the root number 
	\begin{align*} 
		\vepsilon ( \As(f)\times \phi ) = \left\{ \begin{aligned}
			& -1, & & \text{ if } k > l, \\
		&  1, & & \text{ if } k \leqslant l, 
		\end{aligned} \right. 
	\end{align*} 
so we have trivially $ L (1/2, \As(f)\times \phi ) = 0 $ for large $k > l$. This is the reason why we consider here the case that $\phi$ is a Maass cusp form. 
\end{remark}

\begin{remark}
	We may as well consider the case that $\phi (z) = \partial E(z, s) /\partial s |_{s=\frac 1 2 }$ so that  $\lambda_{\phi} (n) = \tau (n)$, where $E (z, s)$ is the Eisenstein series for $\SL_2 (\BZ)$, but $ L ( 1/2,  \As(f)\times \phi  ) $ is not equal to $ |L (1/2, \As(f) )|^2 $. 
\end{remark}

For the application of the large sieve in Theorem \ref{thm: large sieve}, we must express  $	L (s, \As(f)\times \phi)$ in terms of Dirichlet series. %For simplicity, let us assume that $D$ is odd (hence square-free). For $ D $ even, we just need to replace $\sqrt{D}$ by $\sqrt{D}/ 2$ or $\sqrt{D} / \pi_2 $ ($2 \sim \pi_2^2 $) for $D \equiv 8$ or $12 \, (\mathrm{mod}\, 16)$ respectively  in \eqref{eq: lem: Dirichlet series of Rankin-Selberg}. 

	\begin{lem}\label{lem: Dirichlet series of Rankin-Selberg}
Let $\bfF = \bfQ (\sqrt{D})$ {\rm(}if $h^+_{\bfF} = 1$ then $D$ must be prime as in \S \ref{sec: basic defn}{\rm)}.   Then for $\mathrm{Re}(s) > 1$, we have
	\begin{equation}\label{eq: lem: Dirichlet series of Rankin-Selberg}
		L(s, \As(f)\times \phi) = \zeta(2s) Q(s, \phi ) \sum_{n \in \BZ_{+}} \  \mathop{   \sumn_{ \mu \, \in \SO^+ / \RU^2  }   }_{(\mu,D)= (1) }   \frac{\lambda_f(\mu^2 n \sqrt{D}) \lambda_\phi(n)}{ \lambda_f(\sqrt{D}) N(\mu)^{2s} n^s },
	\end{equation}
 	in which the superscript {\small $\natural$} indicates restriction to those $\mu$ with $r(\mu) = 1$ and 
  $Q(s, \phi) $ is the Euler product
	\begin{equation}
		Q (s, \phi ) =  \bigg( 1 - \frac{\lambda_\phi(D)}{D^s} + \frac{1}{D^{2s} } \bigg)^{-1} \cdot  \prod_{p\, \nmid D}\bigg( 1 - \frac{\lambda_\phi(p)^2 - 2}{p^{2s} } + \frac{1}{p^{4s} }\bigg)^{-1} .
	\end{equation}
\end{lem}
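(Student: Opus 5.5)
The plan is to prove \eqref{eq: lem: Dirichlet series of Rankin-Selberg} by comparing Euler products prime by prime. For $\mathrm{Re}(s)>1$ the left side is $\prod_p L_p(s,\As(f)\times\phi)$. Each factor is the degree-$8$ local factor built from the four Asai Satake parameters listed in \S\ref{sec: application} (split, inert and ramified cases) and the Satake parameters $\{a_p,b_p\}$ of $\phi$, with $a_pb_p=1$ and $a_p+b_p=\lambda_\phi(p)$.

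On the right side, the summand is multiplicative in $(\mu,n)$. This follows from the Hecke relations $\lambda_f(\mu_1\mu_2)=\lambda_f(\mu_1)\lambda_f(\mu_2)$ for coprime ideals and the multiplicativity of $\lambda_\phi$. The conditions $r(\mu)=1$ and $(\mu,D)=(1)$ are also local. Writing $\pi_D=\sqrt D$, the factor $\lambda_f(\pi_D)^{-1}$ only interacts with the $D$-part $\lambda_f(\pi_D^{2j+1})$, where $n=D^jn'$; here we assume $\lambda_f(\sqrt D)\neq0$, which is implicit in the statement. So the right side is $\prod_p R_p(X)$ with $X=p^{-s}$, and it suffices to check the identity of rational functions
\[
L_p(s,\As(f)\times\phi)=(1-X^2)^{-1}Q_p(X)\,R_p(X)
\]
in each of the three cases. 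Throughout I use the generating function $\sum_{u\ge0}\lambda_f(\pi^u)x^u=\big((1-\valpha_f(\pi)x)(1-\beta_f(\pi)x)\big)^{-1}$, with $\valpha_f\beta_f=1$ since $f$ has trivial character at level one, and the analogous formula for $\phi$.

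\emph{Inert $p$.} Here $p$ is itself a prime of $\SO$ and $r(\mu)=1$ forces $p\nmid\mu$. Thus $R_p=\sum_j\lambda_f(p^j)\lambda_\phi(p^j)X^j$, a classical $\GL_2\times\GL_2$ Rankin--Selberg sum in the parameters $\valpha_f(p),\beta_f(p)$ at $X$. That sum equals $(1-X^2)\,L(\text{degree }4)$. The Asai factor in this case is $(1-\valpha X)(1-\beta X)(1-X^2)$, and its product with $\phi$ contributes the extra factor $\big((1-a_p^2X^2)(1-b_p^2X^2)\big)^{-1}=\big(1-(\lambda_\phi(p)^2-2)X^2+X^4\big)^{-1}$, which is exactly the $p$-factor of $Q$. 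So the inert identity reduces to a one-line check.

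\emph{Ramified $p=D$.} Here $R_p=\sum_j\lambda_f(\pi_D^{2j+1})\lambda_\phi(D^j)X^j/\lambda_f(\pi_D)$. Using $\lambda_f(\pi^{2j+1})=(\valpha^{2j+2}-\beta^{2j+2})/(\valpha-\beta)$ and summing geometric series, this becomes a rational function in $\valpha^2,\beta^2,a_p,b_p,X$. One then verifies directly that it equals $\zeta_p(2s)^{-1}Q_D^{-1}$ times the product over $\{\valpha^2,\beta^2,1\}\otimes\{a_p,b_p\}$ of the inverse factors. The extra factor $(1-\lambda_\phi(D)X+X^2)^{-1}$ in $Q$ is precisely the factor coming from the parameter $1$.

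\emph{Split $p=\pi_1\pi_2$.} This is the main obstacle. The local sum runs over $\mu=\pi_1^a\pi_2^b$ with $\min(a,b)=0$, because $r(\mu)=1$ excludes $p\mid\mu$, and over $n=p^j$. It reads
\[
\sum_{j}\ \sum_{\min(a,b)=0}\lambda_f(\pi_1^{2a+j})\lambda_f(\pi_2^{2b+j})\lambda_\phi(p^j)X^{j+2a+2b}.
\]
I would first try to remove the constraint $\min(a,b)=0$ by observing that the unrestricted sum over all $(a,b)$ equals $(1-X^4)^{-1}$, or a similar simple factor, times the restricted one, since shifting $(a,b)\mapsto(a+1,b+1)$ multiplies by $X^4$. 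Next, sum over $a$ and $b$ using $\sum_a\lambda(\pi^{2a+j})x^{2a}$, which is the even-shift part of a Schur generating function. This yields an expression in terms of $\prod_i\lambda$-type Schur polynomials in the four Asai parameters $\valpha_f(\pi_1)\valpha_f(\pi_2),\dots$ against $\lambda_\phi(p^j)$. Finally, apply the Cauchy identity $\sum_j s_j(\boldsymbol\gamma)s_j(a_p,b_p)X^j=\prod(1-\gamma_ia_pX)^{-1}(1-\gamma_ib_pX)^{-1}\cdot(\text{correction})$ for $\GL_4\times\GL_2$. The correction factors must come out exactly as $(1-X^2)^{-1}$ and the $p$-factor of $Q$.

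If the bookkeeping gets out of hand, the fallback is a brute-force check. Both sides are rational in $X$ with explicit denominators, so one can clear denominators and compare coefficients, or verify the identity at enough specializations of $(\valpha_f(\pi_1),\valpha_f(\pi_2),a_p)$ to pin down a polynomial identity of bounded degree. Multiplying the three local identities over all $p$ then gives \eqref{eq: lem: Dirichlet series of Rankin-Selberg}.
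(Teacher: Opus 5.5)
\paragraph{Where your proposal stands.} Your architecture matches the paper's: compare Euler factors prime by prime, and in the inert and ramified cases reduce to the $\GL_2\times\GL_2$ Rankin--Selberg identity (Lemma \ref{Dlem: R-S}). At $D$ the parameter is $A=\valpha_f(\sqrt D)^2$, since $\lambda_f(\sqrt D^{\,2j+1})/\lambda_f(\sqrt D)=(A^{j+1}-A^{-j-1})/(A-A^{-1})$. Those parts are fine.

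\paragraph{The split case, which is the real content.} Your first reduction there is false. Under $(a,b)\mapsto(a+1,b+1)$ the term $\lambda_f(\pi_1^{2a+j})\lambda_f(\pi_2^{2b+j})\lambda_\phi(p^j)X^{j+2a+2b}$ becomes $\lambda_f(\pi_1^{2a+j+2})\lambda_f(\pi_2^{2b+j+2})\lambda_\phi(p^j)X^{j+2a+2b+4}$. This is not $X^4$ times the old term, because $\mu^2$ enters the argument of $\lambda_f$ and not only $N(\mu)^{-2s}$. Concretely, let $U$ be the unrestricted series and $R$ the restricted one. Then $U\equiv R \pmod{X^4}$ and $[X^4](U-R)=\lambda_f(\pi_1^2)\lambda_f(\pi_2^2)$. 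So any $g$ with $U=gR$ satisfies $g=1+\lambda_f(\pi_1^2)\lambda_f(\pi_2^2)X^4+O(X^5)$, which depends on $f$; there is no universal factor such as $(1-X^4)^{-1}$.

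The Schur-function step does not repair this. For $\GL_4\times\GL_2$ the Cauchy identity reads $\prod_i(1-\gamma_ia_pX)^{-1}(1-\gamma_ib_pX)^{-1}=\sum_{m,t\geqslant 0}s_{(m+t,t)}(\gamma)\lambda_\phi(p^m)X^{m+2t}$. This is a sum over two-row partitions, not a one-row sum times a correction, and $R$ is not of one-row form anyway. Expanding $s_{(m+t,t)}$ at the tensor-product parameters $\gamma=\{\valpha_f(\pi_1)^{\pm1}\valpha_f(\pi_2)^{\pm1}\}$ is exactly the non-trivial bookkeeping. Asserting that the corrections ``must come out'' as $(1-X^2)^{-1}$ and the $p$-factor of $Q$ assumes the conclusion.

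\paragraph{What works.} Your brute-force fallback is viable, and it is essentially what the paper does via Garrett's identity (Lemma \ref{Dlem: Garrett's identity}). Put $a=\valpha_f(\pi_1)$, $b=\valpha_f(\pi_2)$, $c=\valpha_\phi(p)$. Write $\lambda_f(\pi_1^m)$, $\lambda_f(\pi_2^m)$, $\lambda_\phi(p^m)$ as antisymmetrizations over $\mathrm{G}\cong(\BZ/2\BZ)^3$, acting by $a\mapsto a^{-1}$, $b\mapsto b^{-1}$, $c\mapsto c^{-1}$, and divide by $(a-a^{-1})(b-b^{-1})(c-c^{-1})$. Summing the geometric series under $\min\{v,w\}=0$ collapses $R$ to the antisymmetrization of $\frac{abc}{1-abcX}\big(\frac{1}{1-a^2X^2}+\frac{1}{1-b^2X^2}-1\big)$. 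This is a rational function with explicit denominator $(1-a^2X^2)(1-a^{-2}X^2)(1-b^2X^2)(1-b^{-2}X^2)\prod_{\upsigma\in\mathrm{G}}(1-\upsigma(abc)X)$.

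One then identifies its degree-$14$ numerator as $(1-X^2)(1-a^2X^2)(1-a^{-2}X^2)(1-b^2X^2)(1-b^{-2}X^2)(1-c^2X^2)(1-c^{-2}X^2)$. This is done by checking vanishing at $X=\pm1,\pm a^{\pm1},\pm b^{\pm1},\pm c^{\pm1}$ and the leading term $-X^{14}$; the paper defers these computations to Garrett or a computer algebra check. After cancellation this gives $R=(1-X^2)(1-c^2X^2)(1-c^{-2}X^2)\prod_{\upsigma}(1-\upsigma(abc)X)^{-1}=\zeta_p(2s)^{-1}Q_p(s,\phi)^{-1}L_p(s,\As(f)\times\phi)$. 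So drop the shift reduction and the Cauchy sketch, and run the fallback in this organized form.
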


	The proof of Lemma \ref{eq: lem: Dirichlet series of Rankin-Selberg} outlined in Appendix \ref{Appendix: Rankin-Selberg} will recourse to a formal identity of Garrett, extracted from his work on triple product $L$-functions \cite{Garrett-Triple-Product}. 

Note that %$ \zeta (2s) Q(s, \phi) $ is an Euler product which converges absolutely for $\mathrm{Re}(s) > 1/2$. Moreover, 
 the double $(n, \mu)$-sum in \eqref{eq: lem: Dirichlet series of Rankin-Selberg} splits into 
\begin{align}\label{9eq: split n | D} 
	\sum_{n_1|D^\infty} \! \frac{\lambda_f(n_1 \sqrt{D} ) \lambda_\phi(n_1) }{\lambda_f(\sqrt{D})  n_1 ^s} \cdot \mathop{\sumn \sum}_{(\mu n,D)=(1)}  \frac{\lambda_f(\mu^2 n) \lambda_\phi(n)}{ N(\mu)^{2s} n^s } . 
\end{align}
We have the Deligne bound  (see \cite{Taylor-Hilbert}): 
\begin{equation*}%\label{9eq: Deligne}
	\lambda_f(\mu) \Lt_{\vepsilon} N(\mu)^{\vepsilon}. 
\end{equation*}
Note that by the Hecke relation, 
\begin{align*}
	 \frac{\lambda_f(n_1 \sqrt{D} )   }{\lambda_f(\sqrt{D}) } = \sum_{d|n_1} (-1)^{\mathrm{ord}_{D} (n_1/d)} \lambda_f (d)  , 
\end{align*}
%where $\Omega(n)$ is the number of prime factors of $n$ counted with multiplicity, 
and hence by the Deligne bound, 
\begin{align}\label{9eq: Deligne bound for n1}
	\frac{\lambda_f(n_1 \sqrt{D} )   }{\lambda_f(\sqrt{D}) } \Lt_{\vepsilon} n_1^{\vepsilon} .  
\end{align}
Moreover, we have the Kim--Sarnak bound \cite{Kim-Sarnak}
\begin{align}\label{9eq: Kim-Sarnak}
	 {\lambda_{\phi} (n )   }  \Lt_{\vepsilon} n^{7/64+\vepsilon} , 
\end{align}
and the averaged Ramanujan bound 
\begin{align}\label{9eq: Ramanujan}
\sum_{ n \leqslant X}	{|\lambda_{\phi} (n )|^2   }  \Lt_{\phi} X . 
\end{align}

\subsection{Set-up} It follows from \eqref{8eq: FE}, \eqref{eq: lem: Dirichlet series of Rankin-Selberg}, and \eqref{9eq: split n | D}  the approximate functional equation  as in \cite[Theorem 5.3]{IK}: 
\begin{align*}
	L(1/2 & ,   \As(f)\times \phi) = \\
	& \qquad 2 \sum_{n_1|D^\infty} \! \frac{\lambda_f(n_1 \sqrt{D} ) \lambda_\phi(n_1) }{\lambda_f(\sqrt{D})  \sqrt{n_1} }   \mathop{\sumn \sum}_{(\mu n,D)=(1)}  \frac{\lambda_f(\mu^2 n) \lambda_\phi(n)}{ N(\mu) \sqrt{n} } V_{k}  \bigg( \frac{ n n_1 N(\mu)^2}{ {d_{\bfF}} }\bigg) ,
\end{align*}
where 
\begin{align*}
	V_{k}  (y) = \frac{1}{2\pi i} \int_{(3)}  \zeta(1+2u) Q(1/2+u, \phi) \frac{\gamma(1/2+u, \As(f)\times\phi)}{\gamma(1/2, \As(f)   \times \phi)} \exp ({u^2})  y^{-u} \frac{\nd u}{u}. 
\end{align*}
By   \cite[Proposition 5.4]{IK}, we may restrict the summation to $ n n_1 N(\mu)^2 \Lt k^{2+\vepsilon} $ at the cost of a negligible error. Considering the square $ |L(1/2 ,   \As(f)\times \phi)|^2 $, we apply Cauchy to pull out the $ n_1 $- and $\mu$-sums so that 
\begin{equation}\label{9eq: after Cauchy}
\begin{split}
		\left| L( 1/2, \As(f)\times \phi ) \right|^2  & \Lt  k^\upvarepsilon  \mathop{ \sum_{n_1|D^\infty }  \,  \sumn_{ (\mu, D) = (1) } }_{n_1 N(\mu)^2 \Lt k^{2+\vepsilon}} \frac{1}{N(\mu)} \\
	 & \cdot   \bigg|   \mathop{\sum_{(n,D)=1}}_{n \Lt k^{2+\vepsilon} / n_1 N(\mu)^2 }  \frac{\lambda_f(\mu^2 n ) \lambda_\phi(n)}{ \sqrt{n} } V_{k} \bigg( \frac{ n_1 n N(\mu)^2}{ {d_{\bfF}} }\bigg)  \bigg|^2   ,
\end{split}
\end{equation}
where the bounds in  \eqref{9eq: Deligne bound for n1} and \eqref{9eq: Kim-Sarnak} have been implicitly applied.  

	\subsection{Application of Theorem \ref{thm: large sieve}}
Set $\vnu = \mu^2 $ and $Y = k^{2+\upvarepsilon}/n_1 N(\mu)^2$ in the notation of  \S \ref{secL intro}. Consider  the  average of \eqref{9eq: after Cauchy} over $f \in H_k$. On the right-hand side, the contribution in the case $ n_1 N(\mu)^2 \Gt k $ may be readily estimated by Luo's large sieve inequality \eqref{1eq: Luo LS, Asai} followed by %the Deligne bound \eqref{9eq: Deligne} and 
the averaged Ramanujan bound \eqref{9eq: Ramanujan}, yielding  
	\begin{align*}
		k^\upvarepsilon \mathop{\sum_{n_1|D^\infty } \sum_{ \mu } }_{ k \Lt \, n_1 N(\mu)^2 \Lt k^{2+\vepsilon} } \frac 1 {N(\mu)} \bigg(k^2 + \frac {k^4} {n_1^2 N (\mu)^2}   \bigg) \Lt k^{3+\vepsilon} ,
	\end{align*} 
while that in the case $ n_1 N(\mu)^2 \Lt k $ may be estimated in the same way by our large sieve inequality \eqref{1eq: main LS} in Theorem \ref{thm: large sieve}, yielding  
	\begin{equation*}
  k^\upvarepsilon \mathop{\sum_{n_1|D^\infty } \sum_{ \mu } }_{\, n_1 N(\mu)^2 \Lt k  }  \frac 1 {N(\mu)} \cdot \frac{k^{7/2} }{n_1^{3/2}  N(\mu) }  \Lt  k^{7/2+\upvarepsilon}.
\end{equation*}
At any rate, we have 
	\begin{equation*}%\label{1eq: 2nd moment bound}
	\sumx_{f\in H_k}  | L  (1/2, \As(f)\times  \phi  )  |^2 \Lt  k^{7/2+\upvarepsilon} . 
\end{equation*}
%This completes the proof of Theorem \ref{thm: 2nd moment}.

%Now we apply our large sieve inequality \eqref{1eq: main LS} in Theorem \ref{thm: large sieve} to the  average of \eqref{9eq: after Cauchy} over $f \in H_k$, choosing $\vnu = \mu^2 $, $Y = k^{2+\upvarepsilon}/n_1 N(\mu)^2$, %  (use Luo's \eqref{1eq: Luo LS, Asai} if $Y \leqslant k$), 	and the sequence $\boldsymbol{a} $ to be supported on those $n \in \BZ_+$ with $(n, D) = 1$. Consequently, along with the Deligne bound \eqref{9eq: Deligne}, we deduce that  

	\begin{appendices}
		
		\section{Stationary Phase}
	We record here Lemmas A.1 and A.2 in \cite{AHLQ-Bessel}, as variants of  \cite[Lemma {\rm 8.1}]{BKY-Mass} and \cite[Lemma 5.1.3]{Huxley}.   
	
	\begin{lem}\label{lem: staionary phase, dim 1}
		Let $\varww   \in C_c^{\infty} (a, b)$. Let  $f  \in C^{\infty} [a, b]$ be real-valued.  Suppose that there
		are parameters $P,\, Q,\, R,\, S,\, Z  > 0$ such that
		\begin{align*}
			f^{(i)} (x) \Lt_{ \, i } Z / Q^{i}, \qquad \varww^{(j)} (x) \Lt_{ \, j } S / P^{j},
		\end{align*}
		for  $i \geqslant 2$ and $j \geqslant 0$, and
		\begin{align*}
			| f' (x) | \Gt R. 
		\end{align*}
		Then 
		\begin{align*}
			\int_a^b  e (f(x)) \varww (x)  \nd x \Lt_{A} (b - a) S \bigg( \frac {Z} {R^2Q^2} + \frac 1 {R Q} + \frac 1 {R P} \bigg)^A  
		\end{align*} 
		for any  $A > 0$.
	\end{lem}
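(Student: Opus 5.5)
This is the standard non-stationary phase lemma, and I would prove it by repeated integration by parts. On $(a,b)$ we have $|f'|\Gt R$, so $f'$ does not vanish. Define the differential operator
\begin{align*}
	D g = \frac{\nd}{\nd x}\bigg( \frac{g}{2\pi i f'} \bigg).
\end{align*}
Since $e(f(x)) = \frac{1}{2\pi i f'(x)} \frac{\nd}{\nd x} e(f(x))$ and $\varww$ is compactly supported in $(a,b)$, integrating by parts $A$ times with no boundary terms gives
\begin{align*}
	\int_a^b e(f(x))\varww(x)\,\nd x = (-1)^A \int_a^b e(f(x))\, D^A\varww(x)\,\nd x .
\end{align*}
The whole task is then to show
\begin{align*}
	D^A \varww \Lt_A S\bigg(\frac{Z}{R^2Q^2}+\frac1{RQ}+\frac1{RP}\bigg)^A
\end{align*}
pointwise; integrating trivially over $(a,b)$ then yields the stated bound.

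For the pointwise bound I would argue by induction. One shows that $D^A\varww$ is a linear combination, with coefficients depending only on $A$, of terms of the form
\begin{align*}
	\frac{\varww^{(j)}\, f^{(i_1)}\cdots f^{(i_l)}}{(f')^{A+l}},
\end{align*}
where each $i_\nu\geqslant 2$ and $j+\sum_\nu (i_\nu-1)=A$. This structure is preserved under $D$: differentiating $\varww^{(j)}$ raises $j$ by one; differentiating some $f^{(i_\nu)}$ raises $i_\nu$ by one; and differentiating $(f')^{-(A+l)}$ produces a new factor $f''$, which increases $l$ by one. In every case the extra division by $f'$ is accounted for, and the constraint $j+\sum_\nu(i_\nu-1)=A$ updates correctly.

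Now insert the hypotheses $\varww^{(j)}\Lt S/P^j$, $f^{(i)}\Lt Z/Q^i$ for $i\geqslant2$, and $|f'|\Gt R$. A typical term is then bounded by
\begin{align*}
	S\, P^{-j}\, Z^l\, Q^{-\sum_\nu i_\nu}\, R^{-A-l}.
\end{align*}
Regroup this as follows:
\begin{itemize}
	\item a factor $(RP)^{-1}$ for each unit of $j$;
	\item a factor $Z/(R^2Q^2)$ for each $f^{(i_\nu)}$, which accounts for $f^{(i_\nu)}$ together with two powers of $1/f'$;
	\item a factor $(RQ)^{-1}$ for each remaining unit of $i_\nu-2$.
\end{itemize}
The bookkeeping checks out. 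The total count of factors is $j+l+\sum_\nu(i_\nu-2)=A$. The powers of $R$ total $j+2l+\sum_\nu(i_\nu-2)=A+l$, and the powers of $Q$ total $2l+\sum_\nu(i_\nu-2)=\sum_\nu i_\nu$, both as required. So each term is at most $S$ times a product of $A$ factors, each drawn from the three quantities in the bracket, which proves the pointwise bound.

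The only real obstacle is this combinatorial bookkeeping: making sure the $f''$ factors generated by differentiating $1/f'$ are charged to $Z/(R^2Q^2)$ and not to something worse. The induction above handles it. The lemma is also quoted from \cite{AHLQ-Bessel} and \cite{BKY-Mass}, whose proofs follow exactly this scheme.
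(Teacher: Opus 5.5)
Your proof is correct and is the standard repeated integration-by-parts argument: the operator $D$, the structure of $D^A\varww$ as terms $\varww^{(j)} f^{(i_1)}\cdots f^{(i_l)}/(f')^{A+l}$ with $j+\sum_\nu(i_\nu-1)=A$, and the regrouping into $A$ factors taken from $(RP)^{-1}$, $Z/(R^2Q^2)$, $(RQ)^{-1}$ all check out. The paper gives no proof of its own and simply quotes the lemma from \cite{AHLQ-Bessel}, a variant of \cite{BKY-Mass}, where it is proved in exactly this way. One small point: your argument covers integer $A$; for general real $A>0$, apply it with $\lceil A\rceil$ when the bracket is $\leqslant 1$, and use the trivial bound $(b-a)S$ when the bracket is larger.
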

	 
\begin{lem}\label{lem: 2nd derivative}
	Let $\varww   \in C_c^{\infty} [a, b]$ and $V$ be its total variation. 	Let $f \in C^{\infty} [a, b]$ be real-valued. If $f'' (x) \geqslant \lambda > 0$ on $[a, b]$, then 
	\begin{align*}
		\bigg|\int_a^b  e (f(x)) \varww (x)  \nd x  \bigg| \leqslant \frac {4 V} {\sqrt{\pi \lambda}} . 
	\end{align*}
\end{lem}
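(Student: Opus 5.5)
The plan is to reduce to the unweighted integral over subintervals and then put the weight back by partial integration. For $a \leqslant u \leqslant b$ set $F(u) = \int_a^u e(f(x))\, \nd x$. Since $\varww \in C_c^\infty[a,b]$, it vanishes at both endpoints. Integrating by parts gives
\begin{align*}
\int_a^b e(f(x)) \varww(x)\, \nd x = - \int_a^b F(u)\, \varww'(u)\, \nd u ,
\end{align*}
because the boundary terms vanish. The right-hand side is bounded in absolute value by $V \sup_{u} |F(u)|$. It therefore suffices to show that
\begin{align*}
\bigg| \int_{\alpha}^{\beta} e(f(x))\, \nd x \bigg| \leqslant \frac{4}{\sqrt{\pi \lambda}}
\end{align*}
for every subinterval $[\alpha,\beta] \subset [a,b]$. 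This step only uses $f'' \geqslant \lambda$, which holds on any subinterval.

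For the unweighted bound, fix a parameter $\delta > 0$ and use that $f'$ is strictly increasing with $f'' \geqslant \lambda$. The set $\{x \in [\alpha,\beta] : |f'(x)| \leqslant \delta\}$ is then an interval of length at most $2\delta/\lambda$. Estimated trivially, its contribution is at most $2\delta/\lambda$. The complement consists of at most two intervals, and on each one $f'$ is monotone with $|f'| \geqslant \delta$.

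On each such interval $I = [\alpha_1,\beta_1]$, apply the first derivative test in its Kusmin--Landau form. Write $e(f) = \frac{1}{2\pi i f'}\, \frac{\nd}{\nd x} e(f)$ and integrate by parts:
\begin{align*}
\int_I e(f)\, \nd x = \bigg[ \frac{e(f)}{2\pi i f'} \bigg]_{\alpha_1}^{\beta_1} - \frac{1}{2\pi i} \int_I e(f)\, \nd \Big( \frac{1}{f'} \Big) .
\end{align*}
Because $1/f'$ is monotone and of one sign on $I$, the total variation of $1/f'$ on $I$ equals the difference of its endpoint values, which is at most $1/\delta$. Combining the boundary term with the variation term bounds each piece by $1/(\pi\delta)$, so the two pieces together contribute at most $2/(\pi\delta)$.

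Adding the two contributions gives
\begin{align*}
|F| \leqslant \frac{2\delta}{\lambda} + \frac{2}{\pi\delta} .
\end{align*}
Choosing $\delta = \sqrt{\lambda/\pi}$ makes the right-hand side exactly $4/\sqrt{\pi\lambda}$, which together with the first paragraph gives the lemma.

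The main point needing care is the constant in the Kusmin--Landau step. The boundary term and the variation term must be combined rather than bounded separately: bounding them separately would give $2/(\pi\delta)$ per piece and lose a factor of $2$. The clean way to combine them is to bound $\int_I e(f)\, \nd x$ by $\frac{1}{2\pi}$ times the sum of $1/|f'|$ at the endpoint where it is larger and the total variation of $1/f'$ on $I$.
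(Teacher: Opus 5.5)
Your proof is correct. The paper does not prove this lemma; it quotes it from AHLQ (Lemma A.2) as a variant of Huxley's Lemma 5.1.3. Your argument is the standard one behind that result: a trivial bound on the interval where $|f'|\leqslant\delta$, a first-derivative bound $1/(\pi\delta)$ on each of the two outer pieces, and the choice $\delta=\sqrt{\lambda/\pi}$.

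The one difference is that you remove the weight at the outset by partial integration. That step is where $\varww(b)=0$ (the $C_c^\infty$ hypothesis) is used, and it is why the bare total variation $V$ suffices. For a weight not vanishing at $b$ one would need $|\varww(b)|+V$ in place of $V$; the constant weight $\varww\equiv 1$ shows this.

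One sentence in your last paragraph is wrong as written. With $G=1/|f'|$ monotone on $I$, the triangle inequality gives
\[
\Big|\int_I e(f)\,\nd x\Big| \leqslant \frac{1}{2\pi}\big(G(\alpha_1)+G(\beta_1)+\operatorname{Var}_I G\big)=\frac{1}{\pi}\max_I G .
\]
It does not give $\frac{1}{2\pi}(\max_I G+\operatorname{Var}_I G)$, and that inequality is false. For $f(x)=\delta x$ on $[0,1/(2\delta)]$ one has $\operatorname{Var}_I G=0$, yet the integral has modulus $1/(\pi\delta)$. Adding $\lambda x^2/2$ with small $\lambda$ keeps the example within the hypotheses. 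Since $\max_I G\leqslant 1/\delta$, your per-piece bound $1/(\pi\delta)$ and everything after it still hold.
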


\section{Olver's Uniform Asymptotic Formula}  \label{app: Olver}

Let $x > 1$.	According to the works of Olver \cite{{Olver-1},Olver-Bessel}, we have
\begin{equation}\label{12eq: Jm(mx) = }
	\begin{split}
		J_\vkappa (\vkappa x) = \Big(\frac {4\zeta} {1-x^2} \Big)^{1/4} \Bigg\{   \frac {\mathrm{Ai} \big(\vkappa^{2/3} \zeta\big)} {\vkappa^{1/3}} \sum_{s=0}^m \frac {A_s(\zeta)} {\vkappa^{2s}}   +   \frac { \mathrm{Ai}' \big(\vkappa^{2/3} \zeta\big) } {\vkappa^{5/3}} \sum_{s=0}^{m-1} \frac {B_s(\zeta)} {\vkappa^{2s}} & \\
		+ O \lp \frac{ 1 }{\vkappa^{2m+1} (1 + \vkappa^{1/6} |\zeta|^{1/4})  } \rp & \Bigg\},
	\end{split}
\end{equation}
where $\mathrm{Ai} (y)$ is the Airy function, 
\begin{equation} \label{12eq: gamma}
	\begin{aligned}
	%	&\frac 2 3 \zeta^{3/2}   = \log \frac {1 + \sqrt{1-x^2}} x - \sqrt{1 - x^2},  \quad & &  0 < x \leqslant 1, \\
		  \frac 2 3 (-\zeta)^{3/2}   = \int_1^x   {\sqrt{t^2-1}} \frac {\nd t} {t} =  \sqrt{x^2-1} - \mathrm{arcsec} \, x,   
	\end{aligned}
\end{equation}
and
\begin{align}\label{12eq: As Bs}
	A_s (\zeta) = \sum_{j=0}^{2s} b_j \zeta^{-3j/2} U_{2s-j} (v), \quad \zeta^{1/2} B_s (\zeta) = - \sum_{j=0}^{2s+1}   a_j \zeta^{-3j/2} U_{2s-j+1} (v),
\end{align}
in which $a_0 = b_0 = 1$,  
\begin{align*}
	a_s = \frac {1} {3^{2s} (2s)! } \frac { \Gamma (3s+1/2 )} {\Gamma (1/2)}, \qquad b_s = - \frac {6s+1} {6s-1} a_s,
\end{align*}
and $U_s (v)$ are polynomials in $v = 1 / \sqrt{1-x^2}$, with, for example, the first three be
\begin{align*}%\label{12eq: Us}
	U_0 = 1, \quad U_1 = (3v -5v^3)/24, \quad U_2 = (81v^2-462v^4+385v^6)/1152; 
\end{align*}
see  \cite[\S  4]{Olver-Bessel}  and \cite[Theorem B]{Olver-1} for the expansion and the error term as in \eqref{12eq: Jm(mx) = }, and \cite[\S  6]{Olver-Bessel} for the coefficients $A_s (\zeta)$ and $B_s (\zeta)$ as in \eqref{12eq: As Bs}. % (see also (4.5) in \cite{Olver-1}). 
As for the Airy function, if we set \begin{align*}
	 \gamma =  \frac 2 3 y^{3/2},  
\end{align*} then it is well-known (see  \cite[(10.4.15), (10.4.17)]{A-S}) that
\begin{equation*}
	\begin{split}
	%	& \mathrm{Ai} (y) = \frac {\sqrt{y } } {\sqrt 3 \pi}    K_{1/3} (\gamma), \qquad \quad 
	\mathrm{Ai} (-y) = \frac {\sqrt y } 3 \big(J_{1/3} (\gamma) + J_{-1/3} (\gamma)\big),  
	%	& \mathrm{Ai}' (y) = - \frac { {y } } {\sqrt 3 \pi}    K_{2/3} (\gamma), 
	\qquad \mathrm{Ai}' (-y) = \frac {  y } 3 \big(J_{2/3} (\gamma) -  J_{-2/3} (\gamma)\big).
	\end{split}  
\end{equation*} 
%For $|y| \Lt 1$, we have $\mathrm{Ai} (y) = O (1)$ (see \cite[(10.4.2)]{A-S}). 
For $y \Gt 1$, it follows from \cite[\S \S 7.21, 7.23]{Watson} that 
%\begin{align}\label{12eq: Ai, K}
%	\mathrm{Ai} (y) = O \bigg( \frac { \exp (- \gamma) } {  y^{1/4} } \bigg), \qquad \mathrm{Ai}' (y) = O \big( y^{1/4} { \exp (- \gamma) }  \big), 
%\end{align} 
%and
\begin{align}\label{12eq: Ai, J}
	\mathrm{Ai} (- y) = \frac 1 {\sqrt[4]{y}}  \big(  { \exp (   i \gamma) }  W_{+} (\gamma) + { \exp ( - i \gamma) }  W_{-} (\gamma) \big), 
\end{align}
\begin{align}\label{12eq: Ai, J, 2}
 \mathrm{Ai}' (- y) = \sqrt[4]{y}  \big(  { \exp (   i \gamma) }  W^{\flat}_{+} (\gamma) +   { \exp ( - i \gamma) }  W^{\flat}_{-} (\gamma)\big), 
\end{align}
with $ \gamma^j W_{\pm}^{(j)} (\gamma) \Lt_{j } 1 $ and $ \gamma^j W_{\pm}^{\flat \, (j)} (\gamma) \Lt_{j } 1 $; more explicitly,
\begin{align*}
	W^{\flat}_{\pm} (\gamma) = \bigg( \frac 1 {6\gamma} \mp i \bigg) W_{\pm} (\gamma) -  W'_{\pm} (\gamma) .
\end{align*}

\subsection{Proof of Lemma \ref{lem: Olver}} 
Keep in mind that  
\begin{align*}
	   \gamma / \vkappa =   \sqrt{x^2-1} - \mathrm{arcsec}\, x, \qquad i v = \frac 1 {\sqrt{x^2 - 1}}. 
\end{align*} For $x > 2$, it is clear that
\begin{align*}
 \gamma / \vkappa \asymp  x, \quad i v \asymp \frac 1 { x}, \qquad  x^j \frac {\nd^j  \gamma }  {\nd x^j } \Lt_j  \vkappa x, \quad 	x^j \frac {\nd^j v  }  {\nd x^j  }  \Lt_j \frac 1 {x}   . 
\end{align*}
Then Lemma \ref{lem: Olver} is a direct consequence of \eqref{12eq: Jm(mx) = }--\eqref{12eq: Ai, J, 2}.

		\section{Variant of Gallagher's Hybrid Large Sieve}\label{appendix: large sieve}
		
		In this appendix, we prove the hybrid large sieve in Lemma  \ref{C lem: Gallagher thm 2}. Let us start with Theorem 1 in \cite{Gallagher-LS}:
		\begin{lem}[Gallagher] 
		 We have
			\begin{equation}\label{Ceq: Gallagher thm 1}
				\int_{-T}^{T}  \Big| \sum a_n n^{it} \Big|^2 \nd t \Lt  T^2 \int_{0}^{\infty }  \bigg| \sum_{y  }^{\tau y}  a_n \bigg|^2 \frac{\nd y}{y}, \qquad \tau = e^{1/T}. 
			\end{equation}
		\end{lem}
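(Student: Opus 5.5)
The plan is to realize the Dirichlet polynomial $S(t)=\sum a_n n^{it}=\sum a_n e^{it\log n}$ as the Fourier transform of a step-function average of its coefficients, and then to invoke Plancherel. Throughout, I assume $\boldsymbol{a}$ is finitely supported, which is the case in our applications, since the sums are over $n\sim Y$. This makes every function below compactly supported and in $L^2$.

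\textbf{Step 1: the auxiliary step function.} Put $\delta = 1/T$, so that $\tau = e^{\delta}$. Define
\begin{align*}
F(x) = \sum_{x < \log n \leqslant x+\delta} a_n , \qquad x \in \BR .
\end{align*}
Then $F(x) = \sum_n a_n \mathbf{1}_{[\log n - \delta, \log n)}(x)$, and $F$ is a compactly supported bounded function.

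\textbf{Step 2: compute its Fourier transform.} Interchanging the finite sum with the integral gives
\begin{align*}
\int_{-\infty}^{\infty} F(x) e^{itx}\, \nd x = \sum_n a_n \int_{\log n-\delta}^{\log n} e^{itx}\, \nd x = S(t)\cdot \frac{1-e^{-it\delta}}{it} .
\end{align*}
For $|t| \leqslant T$ we have $|t\delta| \leqslant 1$. Hence
\begin{align*}
\bigg|\frac{1-e^{-it\delta}}{it}\bigg| = \delta \cdot \bigg|\frac{\sin(t\delta/2)}{t\delta/2}\bigg| \geqslant 2\sin(1/2)\, \delta ,
\end{align*}
because $\sin u / u$ is decreasing on $[0,1/2]$. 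Consequently, on $[-T,T]$,
\begin{align*}
|S(t)| \Lt T\, \bigg| \int F(x) e^{itx}\, \nd x \bigg| .
\end{align*}
This lower bound on the multiplier is the only genuinely delicate point. It is exactly why the window length must be $\delta = 1/T$, equivalently why $\tau = e^{1/T}$. With a longer window, the factor $(1-e^{-it\delta})/(it)$ would vanish inside $[-T,T]$.

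\textbf{Step 3: Plancherel and change of variables.} Square the bound from Step 2 and integrate over $[-T,T]$. Then extend the $t$-integral to all of $\BR$ and apply Plancherel, with the normalization \eqref{2eq: Fourier}. This gives
\begin{align*}
\int_{-T}^{T} |S(t)|^2\, \nd t \Lt T^2 \int_{-\infty}^{\infty} \Big|\int F(x)e^{itx}\, \nd x\Big|^2 \nd t = 2\pi T^2 \int_{-\infty}^{\infty} |F(x)|^2\, \nd x .
\end{align*}
Finally, substitute $x = \log y$. The condition $x < \log n \leqslant x+\delta$ becomes $y < n \leqslant \tau y$, and $\nd x = \nd y / y$. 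This yields \eqref{Ceq: Gallagher thm 1}, with an absolute implied constant.
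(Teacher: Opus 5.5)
Your proof is correct. The paper does not prove this lemma; it simply quotes Theorem 1 of Gallagher \cite{Gallagher-LS}, and your argument (Fourier transform of the windowed coefficient sum, the lower bound $\gg 1/T$ for the multiplier $(1-e^{-it\delta})/(it)$ on $[-T,T]$, Plancherel, then $x=\log y$) is essentially Gallagher's original proof. His centred window makes no difference to the argument.

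One small correction to your closing remark: the window need not be exactly $1/T$, and a somewhat longer one does not make the multiplier vanish on $[-T,T]$. Any $\delta\leqslant c/T$ with a fixed $c<2\pi$ still keeps it $\gg\delta$ there; $\delta=1/T$ is simply the choice that matches $\tau=e^{1/T}$.
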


For brevity, we write $ n \sim_{\tau} y $ for $ n$ in the segment  $ [y, \tau y] $ as above in \eqref{Ceq: Gallagher thm 1}.

Returning to the setting of Lemma \ref{C lem: Gallagher thm 2}, it follows from  \eqref{Ceq: Gallagher thm 1} that 
\begin{equation}\label{Ceq: applying lem 1}
	\sum_{\valpha (\mod\, c)} \int_{-T}^{T}  \bigg| \sum  a_n  n^{it} e_{\bfF} \Big[ \frac{n \valpha}{c}\Big] \bigg|^2 \nd t \Lt  T^2 \sum_{\valpha (\mod\, c)} \int_{0}^{\infty }  \bigg| \sum_{n \sim_{\tau} y} a_n e_{\bfF} \Big[ \frac{n \valpha}{c} \Big] \bigg|^2 \frac{\nd y}{y} . 
\end{equation}
By the orthogonality relation and the Cauchy inequality,
\begin{align*}
\begin{aligned}
		\sum_{\valpha (\mod\, c)}  \bigg| \sum_{n \sim_{\tau} y } a_n e_{\bfF}  \Big[\frac{n \valpha}{c} \Big] \bigg|^2 & = N(c) \sum_{\beta (\mod\, c)} \bigg| \mathop{\sum_{n \sim_{\tau} y }}_{n \equiv \beta (\mod \, c) } a_n   \bigg|^2 \\
		& \leqslant N(c) \sum_{\beta (\mod\, c)} \bigg(1 + \frac{\tau y - y}{N(c)/r(c)} \bigg)  \mathop{\sum_{n \sim_{\tau} y }}_{n \equiv \beta (\mod \, c) } |a_n|^2   , 
\end{aligned}
\end{align*} 
where we have used the simple  fact that, for two rational $m_1, m_2 \in \BZ$, the congruence $ m_1 \equiv m_2 (\mod \, c) $ over $\SO$ implies the congruence $ m_1 \equiv m_2 (\mod \, N(c)/r(c)) $ over $\BZ$ (recall that $r(c)$ is the largest rational divisor of $c$). Consequently,
\begin{align}
	\label{Ceq: bound}
	\sum_{\valpha (\mod\, c)}  \bigg| \sum_{n \sim_{\tau} y } a_n e_{\bfF}  \Big[\frac{n \valpha}{c} \Big] \bigg|^2  \leqslant  \big(N(c) +  { r(c)}  { ( \tau - 1) y }  \big)   {\sum_{n \sim_{\tau} y }}  |a_n|^2   . 
\end{align} 
Combining \eqref{Ceq: applying lem 1} and \eqref{Ceq: bound}, we infer that 
\begin{align*}
	\sum_{\valpha (\mod\, c)} \int_{-T}^{T}  \bigg| \sum  a_n  n^{it} e_{\bfF} \Big[ \frac{n \valpha}{c}\Big] \bigg|^2 \nd t \Lt  T^2   \int_{0}^{\infty }  \big(N(c) + { r(c)} { ( \tau - 1)y }   \big)  \sum_{ n \sim_{\tau} y } |a_n|^2  \frac{\nd y}{y}. 
\end{align*}
The coefficient of  $|a_n|^2$ here is  
\begin{align*}
	T^2 {N(c)} \int_{n/\tau}^{n}  \frac {\nd y} {y}  + T^2  r(c) (\tau-1) \int_{n/\tau}^{n}   {\nd y} = T N(c)   +   T^2  r(c) n \frac{(\tau - 1)^2  } {\tau}, 
\end{align*}
which is bounded by $  T N(c) + n r(c)$  as desired.

\section{Dirichlet Series for Convoluted Asai $L$-functions}\label{Appendix: Rankin-Selberg}
The purpose of this appendix is to prove the formula in Lemma \ref{lem: Dirichlet series of Rankin-Selberg}. It is reduced to the following formulae for the local factors of $ L  (s,\As(f)\times \phi) $.

\begin{lem}\label{lemD: Euler product of Rankin-Selberg}
	
Let $\mathrm{Re}(s) > 1$. 	Split %$L (s,\As(f)\times \phi) $ into 
	\begin{align*}
		L(s,\As(f)\times \phi) = \prod_p L_p(s,\As(f)\times \phi).
	\end{align*}
	Write  
\begin{align*}
\zeta_p (s) = \bigg(1 - \frac 1 {p^s} \bigg)^{-1},  \qquad 	Q_p (s, \phi) & =\left\{ \begin{aligned}
	\displaystyle &	\bigg( 1 - \frac{\lambda_\phi(p)^2 - 2}{p^{2s} } + \frac{1}{p^{4s} }\bigg)^{-1}, & & \text{ if } p \nmid \! D, \\
	\displaystyle   &   \bigg( 1 - \frac{\lambda_\phi(p)}{p^s} + \frac{1}{p^{2s} } \bigg)^{-1},  & & \text{ if } p \, |  D.
	\end{aligned} \right. 
\end{align*}

	{\rm (1)} If $p \sim \pi_1 \pi_2$ {\rm(}$\pi_1 \not \sim \pi_2${\rm)}, then
	\begin{equation*}
		\begin{split}
			L_p(s,\As(f)\times \phi)   = \zeta_p (2s) Q_p (s, \phi) \mathop{\sum \sum \sum}_{ \min\{ v  , w  \} = 0 }   \frac {\lambda_f (p^j \pi_1^{2v }  \pi_2^{2 w }) \lambda_\phi(p^j)} {p^{(j+2 v +2 w )s}}.
		\end{split}
	\end{equation*}
	
	{\rm (2)} If $p \sim \pi$, then
	\begin{equation*}
		L_p(s,\As(f)\times \phi) = \zeta_p (2s) Q_p (s, \phi) \sum_{j = 0}^{\infty} \frac{\lambda_f(p^j) \lambda_\phi(p^j)}{ p^{js} }.
	\end{equation*}
	
	{\rm (3)} If $p \sim \pi^2$ {\rm(}indeed, $p = D$ for $h_{\bfF}^{+} = 1${\rm)}, then
	\begin{equation*}
		L_p(s,\As(f)\times \phi) = \zeta_p (2s) Q_p (s, \phi)  \sum_{j=0}^{\infty} \frac{ \lambda_f(p^j \pi) \lambda_\phi(p^j)}{ \lambda_f(\pi) p^{js} }.
	\end{equation*}
\end{lem}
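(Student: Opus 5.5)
The plan is a prime-by-prime comparison of Euler factors. The Satake parameters of $L_p(s,\As(f)\times\phi)$ are the products of the four Satake parameters of $L_p(s,\As(f))$, listed in \S\ref{sec: application}, with the two parameters $\valpha_\phi(p),\beta_\phi(p)$ of $\phi$, where $\valpha_\phi(p)\beta_\phi(p)=1$ and $\valpha_\phi(p)+\beta_\phi(p)=\lambda_\phi(p)$. So $L_p$ is the degree-$8$ Euler factor $\prod(1-\gamma_i\delta_j p^{-s})^{-1}$. In each case I will expand both sides as formal power series in $X=p^{-s}$ in terms of $\valpha_f,\beta_f$ at the primes above $p$ and $\valpha_\phi,\beta_\phi$. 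I then use the Hecke relations $\lambda_f(\pi^a)=(\valpha_f(\pi)^{a+1}-\beta_f(\pi)^{a+1})/(\valpha_f(\pi)-\beta_f(\pi))$, with $\valpha_f(\pi)\beta_f(\pi)=1$ for $\pi\nmid D$, and the analogous formula for $\lambda_\phi(p^j)$.

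\emph{Case (2), $p$ inert.} Here $\lambda_f(p^j)$ is the Hecke eigenvalue at $\pi^j$ with $N(\pi)=p^2$, and $\As(f)_p$ has parameters $\valpha_f(\pi)^{1/2}$-type combinations. More precisely, $L_p(s,\As(f))=L(s,\mathrm{std}_\pi)\,(1-p^{-2s})^{-1}$ in the variable $X$ with $X^2=N(\pi)^{-s}$. Writing $\sum_j\lambda_f(p^j)\lambda_\phi(p^j)X^j$ as a Rankin--Selberg-type sum of two $\GL_2$ Hecke series, the classical identity
\begin{align*}
\sum_j\lambda_1(p^j)\lambda_2(p^j)X^j=\frac{1-X^2}{\prod(1-\gamma_i\delta_jX)}
\end{align*}
(or its analogue in $X^2$) produces the factor $\zeta_p(2s)$. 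The remaining parameters, those of the form $\pm\valpha_\phi,\pm\beta_\phi$ arising from $1-p^{-2s}=(1-p^{-s})(1+p^{-s})$, give exactly $(1-(\lambda_\phi(p)^2-2)X^2+X^4)^{-1}=Q_p(s,\phi)$, since $\valpha_\phi^2+\beta_\phi^2=\lambda_\phi(p)^2-2$.

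\emph{Case (1), $p$ split.} $\As(f)_p$ is the tensor product $\pi_1\otimes\pi_2$, so $L_p(s,\As(f)\times\phi)$ is a triple-product local factor $L(s,\pi_1\otimes\pi_2\otimes\phi_p)$. I will invoke Garrett's formal identity for the triple product, which expresses
\begin{align*}
\prod(1-\gamma_a\gamma'_b\gamma''_cX)^{-1}
\end{align*}
as $(1-X^2)^{-1}Q$-type factor times $\sum_{j,v,w,\min(v,w)=0}\lambda_1(p^{j+2v})\lambda_2(p^{j+2w})\lambda_3(p^j)X^{j+2v+2w}$. I then identify $\lambda_1(p^{j+2v})\lambda_2(p^{j+2w})=\lambda_f(p^j\pi_1^{2v}\pi_2^{2w})$ by multiplicativity of $\lambda_f$ over coprime ideals. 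The step I expect to be the main obstacle is verifying that Garrett's normalizing factor is precisely $\zeta_p(2s)Q_p(s,\phi)$. If the quoted identity is not directly usable, my fallback is to check it symbolically by clearing denominators and comparing coefficients, which is a finite computation in generating functions since both sides are rational in $X$.

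\emph{Case (3), $p=D$ ramified.} Here $\lambda_f(\pi)$ has a single parameter $\valpha$ with $\valpha^2\leftrightarrow p$ and no $\beta$ in the usual sense; the Euler factor in \S\ref{sec: application} is $(1-\valpha^2X)(1-\beta^2X)(1-X)$. I will compute $\lambda_f(p^j\pi)/\lambda_f(\pi)=\lambda_f(\pi^{2j+1})/\lambda_f(\pi)$ via the Chebyshev formula. The resulting even/odd sum, paired with $\lambda_\phi(p^j)$, gives the degree-$6$ factor times $(1-\lambda_\phi(p)X+X^2)^{-1}$ from the $(1-X)$ piece and a factor $(1-X^2)$, again by the same Rankin identity.

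Finally, I will multiply over all $p$. The local sums assemble into the global sum over $n$ and the square-free-in-$\mu$ ($r(\mu)=1$) part $\mu^2$. The $\sqrt D$ normalization comes from case (3), since $\lambda_f(p^j\pi)/\lambda_f(\pi)$ is the local factor of $\lambda_f(n\sqrt D)/\lambda_f(\sqrt D)$. This yields Lemma~\ref{lem: Dirichlet series of Rankin-Selberg}.
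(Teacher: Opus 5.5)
Your proposal follows the paper's proof: for split $p$ you apply Garrett's identity (Lemma~\ref{Dlem: Garrett's identity}) with $a=\alpha_f(\pi_1)$, $b=\alpha_f(\pi_2)$, $c=\alpha_\phi(p)$, $X=p^{-s}$, whose numerator $(1-X^2)(1-c^2X^2)(1-c^{-2}X^2)$ is literally $\zeta_p(2s)^{-1}Q_p(s,\phi)^{-1}$, so the obstacle you anticipate is immediate. For inert or ramified $p$ you apply the $\mathrm{GL}_2\times\mathrm{GL}_2$ identity (Lemma~\ref{Dlem: R-S}) with $a=\alpha_f(\pi)$, resp.\ $a=\alpha_f(\pi)^2$, and the leftover Asai parameters $\pm1$, resp.\ $1$, produce $Q_p(s,\phi)$; just fix two slips in your write-up: at an inert $p$ the Asai parameters are $\alpha_f(\pi),\beta_f(\pi),\pm1$ attached to $X=p^{-s}$ (nothing of ``$\alpha_f(\pi)^{1/2}$-type'' and no passage to $X^2$), and at $\pi=\sqrt D$ the form is unramified (full level), so $\alpha_f(\pi)\beta_f(\pi)=1$, which is exactly what gives $\lambda_f(\pi^{2j+1})/\lambda_f(\pi)=(a^{j+1}-a^{-j-1})/(a-a^{-1})$ with $a=\alpha_f(\pi)^2$.
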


Our  focus will primarily be on (1), for which our reference is Garrett's work on   $\GL_2\times \GL_2 \times \GL_2$ triple products  \cite[\S 4]{Garrett-Triple-Product}. The cases of (2) and (3) become simpler and are essentially parallel to that of $\GL_2\times \GL_2 $ Rankin--Selberg products. %Thus we shall omit easy calculations and repetitive arguments. Note that the ramified case when $p = \pi^2$ does not play a major role in our problem.

The next lemma is extracted from Garrett's arguments in  \cite[\S 4]{Garrett-Triple-Product}. 

\begin{lem}[Garrett]\label{Dlem: Garrett's identity}
	Write $a'=1/a,\, b'=1/b,\, c'=1/c$. Then the formal series
	\begin{equation}\label{Deq: indentity_left}
		\mathop{\sum \sum \sum}_{  \min\{ v,w\} = 0 }  \frac{a^{j+2v+1} - a^{\prime \, j+2v+1}  }{a - a'}\cdot\frac{b^{j+2w+1} - b^{\prime \, j+2w+1} }{b - b'}\cdot \frac{c^{j+1} - c^{\prime \, j+1} }{c - c'}\cdot X^{j+2v+2w}
	\end{equation}
	is equal to 
\begin{equation}\label{Deq: indentity_right}
	\begin{split}
\frac {(1-X^2) (1-c^2 X^2)   (1 - c^{\prime \, 2}  X^2)} { (1-abcX)(1-a'bcX) \cdots  (1-ab'c'X)(1-a'b'c'X)}{\rm;}
	\end{split}
\end{equation}
the denominator is symmetric with respect to $a,\, b,\, c$ {\rm(}so the four omitted terms involve $ ab'c , \,  a b c', \, a'b'c, \, a'bc'${\rm)}. 
\end{lem}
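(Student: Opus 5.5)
The plan is to evaluate the formal series in \eqref{Deq: indentity_left} in closed form, sign by sign, and then match it with \eqref{Deq: indentity_right} as rational functions of $X$. Write $U_n(a) = (a^{n+1}-a'^{\,n+1})/(a-a') = \sum_{\varepsilon = \pm 1} \varepsilon\, a^{\varepsilon(n+1)}/(a-a')$, where $a^{-1} = a'$. Then \eqref{Deq: indentity_left} times $(a-a')(b-b')(c-c')$ equals
\begin{align*}
\sum_{\varepsilon_1,\varepsilon_2,\varepsilon_3 = \pm 1} \varepsilon_1\varepsilon_2\varepsilon_3\, a^{\varepsilon_1} b^{\varepsilon_2} c^{\varepsilon_3} \mathop{\sum\sum\sum}_{\min\{v,w\}=0} (a^{\varepsilon_1}b^{\varepsilon_2}c^{\varepsilon_3}X)^{j} (a^{2\varepsilon_1}X^2)^{v} (b^{2\varepsilon_2}X^2)^{w} .
\end{align*}
For the condition $\min\{v,w\}=0$, take the unrestricted sum over $v,w\geqslant 0$ and subtract the part with $v,w\geqslant 1$. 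The shift $(v,w)\mapsto(v+1,w+1)$ multiplies each term by $a^{2\varepsilon_1}b^{2\varepsilon_2}X^4$. Hence each signed term is the geometric-series product
\begin{align*}
\frac{ a^{\varepsilon_1} b^{\varepsilon_2} c^{\varepsilon_3}\,(1 - a^{2\varepsilon_1}b^{2\varepsilon_2}X^4)}{(1-a^{\varepsilon_1}b^{\varepsilon_2}c^{\varepsilon_3}X)(1-a^{2\varepsilon_1}X^2)(1-b^{2\varepsilon_2}X^2)} .
\end{align*}
This turns the identity into an equality of two explicit rational functions of $X$ with coefficients in $\BQ(a,b,c)$.

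Next I compare both sides via their partial fraction structure in $X$. Both vanish in the same way at $X\to\infty$; that is, a degree count shows both are $O(1/X)$-type proper rational functions after subtracting the same constant, which I would check directly. Their values at $X=0$ agree, since both give $1$ after dividing by $(a-a')(b-b')(c-c')$: the $X^0$ coefficient on the left is $U_0U_0U_0=1$. So it suffices to check the poles.

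The poles of the right side are only the eight simple poles at $X = (a^{\varepsilon_1}b^{\varepsilon_2}c^{\varepsilon_3})^{-1}$. At such a pole only the corresponding signed term on the left is singular, provided no cancellation is hidden in the spurious factors. So I would compute both residues and verify they agree. On the right the residue is an explicit product. On the left, substituting $X = (a^{\varepsilon_1}b^{\varepsilon_2}c^{\varepsilon_3})^{-1}$ into $(1-a^{2\varepsilon_1}b^{2\varepsilon_2}X^4)/((1-a^{2\varepsilon_1}X^2)(1-b^{2\varepsilon_2}X^2))$ gives an expression I would simplify until it matches. By the symmetries $a\leftrightarrow a'$, $b\leftrightarrow b'$, $c\leftrightarrow c'$, under which both sides, once multiplied by $(a-a')(b-b')(c-c')$, are alternating, it suffices to do this for $\varepsilon_1=\varepsilon_2=\varepsilon_3=+1$.

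The main obstacle is the spurious poles at $X^2 = a^{-2\varepsilon_1}$ and $X^2 = b^{-2\varepsilon_2}$ on the left, which must cancel. I would handle them by pairing the terms with $\varepsilon_3=+1$ and $\varepsilon_3=-1$ for fixed $\varepsilon_1,\varepsilon_2$. At $X = \pm a^{-\varepsilon_1}$ the numerator factor $1-a^{2\varepsilon_1}b^{2\varepsilon_2}X^4$ becomes $1-b^{2\varepsilon_2}a^{-2\varepsilon_1}$. I would check that the residues of the two paired terms are negatives of each other, using the factor $\varepsilon_3 c^{\varepsilon_3}/(1-a^{\varepsilon_1}b^{\varepsilon_2}c^{\varepsilon_3}X)$ evaluated there, together with the pairing of $\pm X$. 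If this bookkeeping becomes messy, the fallback is brute force: clear all denominators and verify a polynomial identity in $X$, organized by antisymmetrization in $a,b,c$. Once the spurious poles cancel and the residues at the genuine poles agree, the two sides differ by a polynomial in $X$ that vanishes at infinity, hence they are equal.
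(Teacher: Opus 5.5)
Your reduction to eight signed geometric-series terms is correct; it is the same closed form the paper reaches after summing the geometric series, since $\frac{1}{1-a^2X^2}+\frac{1}{1-b^2X^2}-1=\frac{1-a^2b^2X^4}{(1-a^2X^2)(1-b^2X^2)}$. The strategy of matching principal parts and using decay at infinity is also sound. However, the mechanism you propose for the step you call the main obstacle does not work.

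Take $\varepsilon_1=+1$ and the pole $X=1/a$. There $1-a^2b^{2\varepsilon_2}X^4=1-b^{2\varepsilon_2}a^{-2}$ equals the factor $1-b^{2\varepsilon_2}X^2$ and cancels it. So, apart from the common factor $1/((a-a')(b-b')(c-c'))$, the coefficient of $(1-aX)^{-1}$ in the $(+,\varepsilon_2,\varepsilon_3)$ term is
\[
\frac a2\,\varepsilon_2\varepsilon_3\, f\big(b^{\varepsilon_2}c^{\varepsilon_3}\big),\qquad f(u)=\frac{u}{1-u}.
\]
For your pairing $\varepsilon_3=\pm1$ with $\beta=b^{\varepsilon_2}$ fixed, this gives
\[
\frac a2\,\varepsilon_2\big(f(\beta c)-f(\beta/c)\big)=\frac a2\,\varepsilon_2\,\frac{\beta(c^2-1)}{(1-\beta c)(c-\beta)}\neq 0,
\]
so the two paired residues are not negatives of each other. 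The cancellation happens only in the full four-term sum with $\varepsilon_1=+1$. Since $f(u)+f(u^{-1})=-1$, the terms $\{bc,\,b'c'\}$ contribute $-a/2$ and the terms $\{bc',\,b'c\}$ contribute $+a/2$.

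At $X=-1/a$, a separate simple pole, the same computation works with $g(u)=u/(1+u)$ and $g(u)+g(u^{-1})=1$. The remaining spurious poles at $\pm a$ and $\pm b^{\pm1}$ then follow from invariance under $a\leftrightarrow a'$ and the $a\leftrightarrow b$ symmetry. You should also state that $a,b,c$ are algebraically independent, so that all sixteen candidate poles are distinct and simple. So the gap is easy to repair, but as written that step would fail.

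The rest of your plan goes through. At $X=1/(abc)$ both sides have coefficient of $(1-abcX)^{-1}$ equal to
\[
\frac{1-a^{-2}b^{-2}c^{-4}}{(1-a^{-2})(1-b^{-2})(1-c^{-2})(1-a^{-2}c^{-2})(1-b^{-2}c^{-2})}.
\]
Invariance of both sides under $\mathrm{G}$ transports this to the other seven genuine poles. No constant needs to be subtracted at infinity, since both sides are already $O(1/X)$, and the check at $X=0$ is redundant.

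Compared with the paper: Garrett clears all denominators and identifies the degree-$14$ numerator $P(X)$ by its zeros and its leading term $-X^{14}$. Its zeros at $\pm a^{\pm1},\pm b^{\pm1}$ are exactly your spurious-pole cancellation; it also uses zeros at $\pm1,\pm c^{\pm1}$. Your route replaces those six zeros and the leading coefficient by one residue computation at a single genuine pole, plus Galois symmetry and decay at infinity. That is somewhat more economical; otherwise the two are the same rational-function identification.
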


	In Garrett's setting, $a$, $b$, $ c$ are respectively taken as the Satake parameters of three cusp forms for $\mathrm{SL}_2 (\BZ)$. In  our  case,  we set $a = \valpha_f(\pi_1)$, $b = \valpha_f(\pi_2)$, $c = \valpha_\phi(p)$, and $X = p^{-s}$. Note that the denominator in \eqref{Deq: indentity_right} is just $L_p^{-1}(s,\As(f)\times \phi)$ for $p \sim \pi_1 \pi_2$. Thus Lemma \ref{lemD: Euler product of Rankin-Selberg} (1) follows readily from Lemma \ref{Dlem: Garrett's identity}, along with  
	\begin{align*}
		1 - (\lambda_\phi(p)^2 - 2)p^{-2s} + p^{-4s} = (1-c^2 X^2)(1 - c^{\prime \, 2} X^2), 
	\end{align*} 
\begin{align*}
  \lambda_f (\pi_1^m) = \frac{a^{m+1} - a^{\prime \, m+1} }{a - a'} , \quad \lambda_f (\pi_2^m) = \frac{b^{m+1} - b^{\prime \, m+1} }{b - b'} , \quad 	\lambda_\phi(p^m) = \frac{c^{m+1} - c^{\prime \, m+1}  }{c - c'} . 
\end{align*} 

\begin{lem}\label{Dlem: R-S}
	Write $a'=1/a $ and $b'=1/b$. Then
	\begin{equation}
		\sum_{ j=0 }^{\infty}\!  \frac{a^{j+1} - a^{\prime \, j+1}  }{a - a'} \cdot  \frac{b^{j+1} - b^{\prime \, j+1} }{b - b'} \cdot  X^{j} \! =\! \frac{1-X^2}{(1-abX)(1-a'bX)(1-ab'X)(1-a'b'X)}.
	\end{equation}
\end{lem}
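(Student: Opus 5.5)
The identity is the classical Rankin--Selberg generating-function identity for $\GL_2\times\GL_2$, and I would prove it by partial fractions in $a$, followed by summing geometric series and simplifying a rational function. First I clear the denominators $a-a'$ and $b-b'$. It then suffices to show
\begin{align*}
\sum_{j\geqslant 0}\big(a^{j+1}-a'^{\,j+1}\big)\big(b^{j+1}-b'^{\,j+1}\big)X^j = \frac{(a-a')(b-b')(1-X^2)}{(1-abX)(1-a'bX)(1-ab'X)(1-a'b'X)}
\end{align*}
as formal power series in $X$. Expanding the product on the left gives four geometric series:
\begin{align*}
\frac{ab}{1-abX}-\frac{ab'}{1-ab'X}-\frac{a'b}{1-a'bX}+\frac{a'b'}{1-a'b'X}.
\end{align*}

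Next I group these four terms in pairs according to the $a$-factor. Using $bb'=1$, I get
\begin{align*}
\frac{ab}{1-abX}-\frac{ab'}{1-ab'X} = \frac{a(b-b')}{(1-abX)(1-ab'X)} = \frac{a(b-b')}{1-a(b+b')X+a^2X^2},
\end{align*}
and likewise the $a'$ pair gives $-a'(b-b')/\big(1-a'(b+b')X+a'^2X^2\big)$. Pulling out the common factor $b-b'$, I put the difference of these two fractions over the common denominator. Writing $t=b+b'$, the numerator is
\begin{align*}
a\big(1-a'tX+a'^2X^2\big)-a'\big(1-atX+a^2X^2\big).
\end{align*}
Since $aa'=1$, the $tX$ terms cancel, and the numerator reduces to $(a-a') + (a'-a)X^2 = (a-a')(1-X^2)$. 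The denominator is $(1-abX)(1-ab'X)(1-a'bX)(1-a'b'X)$, which gives the claimed right-hand side after dividing back by $(a-a')(b-b')$.

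There is no real obstacle here. The only point needing care is the cancellation of the cross terms, which uses both $aa'=1$ and $bb'=1$. One should also note that the identity holds formally as power series in $X$, and hence as a rational-function identity whenever $|X|$ is small enough that all the geometric series converge. This is the setting needed when specializing to $a=\valpha_f(\pi)$, $b=\valpha_\phi(p)$, and $X=p^{-s}$ for the parts (2) and (3) of Lemma \ref{lemD: Euler product of Rankin-Selberg}.
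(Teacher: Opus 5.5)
Your argument is correct. The paper states this lemma without proof, as the classical $\GL_2\times\GL_2$ identity. The only argument it writes out is the outline for Garrett's identity (Lemma \ref{Dlem: Garrett's identity}), and your proof is the two-variable analogue of that outline up to the final step.

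Your four geometric series are exactly the signed symmetrization $\sum_{\upsigma}\mathrm{sgn}(\upsigma)\,\upsigma\big(ab/(1-abX)\big)$ over $(\BZ/2\BZ)^2$. The paper would then write the result as $P(X)/\prod_{\upsigma}(1-\upsigma(ab)X)$ and identify $P$ from its zeros and leading term. You instead compute the numerator directly by pairing the terms with the same $a$-factor. For two variables the direct computation is shorter and fully explicit. The symmetry-plus-factor-checking method is what makes the triple-product sum with the constraint $\min\{v,w\}=0$ manageable.

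Two minor remarks. First, the $tX$ terms cancel identically, since their coefficients are $-aa't$ and $+a'at$; so $aa'=1$ is used only in the $X^2$ coefficient, and $bb'=1$ only when rewriting the pair denominators as quadratics. Second, clearing the factor $(a-a')(b-b')$ presupposes $a,b\neq\pm1$. This is harmless if you regard $a,b$ as indeterminates, as the paper does for Lemma \ref{Dlem: Garrett's identity}. Both sides then have Laurent-polynomial coefficients in $a,b$, so the identity survives every specialization.
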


Finally, Lemma  \ref{lemD: Euler product of Rankin-Selberg} (2), (3) follow directly from Lemma \ref{Dlem: R-S}. 
For the case $p \sim \pi$ or $ \pi^2$, we set $a = \valpha_f(\pi)$ or $ \valpha^2_f(\pi)$, $b = \valpha_\phi(p)$, and $X = p^{-s}$. Note that in both cases
$$ Q_p (s,\phi) L_p^{-1}(s,\As(f)\times \phi) = (1-abX)(1-a'bX)(1-ab'X)(1-a'b'X). $$

\subsection{Outline of the Proof of Lemma \ref{Dlem: Garrett's identity}}

For completeness, let us outline here Garrett's arguments in \cite[\S 4]{Garrett-Triple-Product}. 

As the identity is formal,   let us suppose that $a,\, b,\, c$ are algebraically independent over $\bfK = \overline{\BQ}(X)$. Let
\begin{align*}
	\mathrm{G} & = \mathrm{Gal}\big( \bfK(a,b,c) / \bfK(a+a',b+b',c+c') \big) .
	%	\mathrm{H} & = \mathrm{Gal}\big( \bfK(a,b,c) / \bfK(a+a',b+b',c) \big).
\end{align*} 
Clearly $\mathrm{G} \approx (\BZ/2\BZ)^3$. % and $\mathrm{H} \approx (\BZ/2\BZ)^2$. 
Define the sign homomorphism
\begin{align*}
	\mathrm{sgn}: (\BZ/2\BZ)^3 \ra \{ \pm 1 \}, \qquad \mathrm{sgn} (x_1,x_2,x_3) = \exp(\pi i (x_1 + x_2 + x_3)).  
\end{align*} 
Now  we can express \eqref{Deq: indentity_right} as
\begin{equation*}
	(1-X^2) (1-c^2 X^2) (1 - c^{\prime \, 2} X^2) \prod_{\upsigma \in \mathrm{G}} \frac 1  {1 - \upsigma (abc)X}  .
\end{equation*}
On the other hand, we may rewrite \eqref{Deq: indentity_left} as
\begin{align*}
	\frac{1}{(a-a')(b-b')(c-c')}  \mathop{\sum \sum \sum}_{  \min\{ v,w\} = 0 }\, \sum_{ \upsigma \in \mathrm{G} }\mathrm{sgn}(\upsigma) \upsigma \big(a^{j+2v+1} b^{j+2w+1} c^{j+1} \big) X^{j+2v+2w}.  
\end{align*}
Summing the geometric series, with the restriction $ \min\{ v,w\} = 0$, we transform the expression above into  
\begin{equation}\label{Deq: simplified series}
	\frac{1}{(a-a')(b-b')(c-c')} \! \sum_{ \upsigma \in \mathrm{G} }\! \mathrm{sgn}(\upsigma) \upsigma \left\{ \frac{abc}{1 - abcX}\bigg( \frac{1}{1-a^2 X^2} + \frac{1}{1-b^2 X^2} - 1  \bigg) \! \right\} \! .
\end{equation}
%	This simplifies to ??????
This is a rational function in $X$, which can be written as
\begin{align*}
	\frac{P(X) }{ (1-a^2 X^2) (1-a^{\prime \, 2} X^2) (1-b^2 X^2)(1-b^{\prime \, 2} X^2) \prod_{ \upsigma \in \mathrm{G} } \big( 1 - \upsigma (abc)X \big) } .
\end{align*}
%where $P(X)$ is a polynomial. 
Therefore it is left to prove  
\begin{align*}
	P(X) \! =\! (1-X^2)(1-a^2 X^2)(1-a^{\prime \, 2} X^2)(1-b^2 X^2)(1-b^{\prime \, 2} X^2)(1-c^2 X^2)(1-c^{\prime \, 2} X^2). 
\end{align*}
To this end, one just needs to verify that $P (X)$ has the indicated factors and highest-degree term $- X^{14}$, by calculating the sum or sub-sums in \eqref{Deq: simplified series} at special values like $X = \pm 1/ a, \cdots, \, \pm 1, \, \text{`$\infty$'}$. Such calculations have been done by Garrett in \cite[\S 4]{Garrett-Triple-Product} (or may be carried out by a software like Mathematica). 

Garrett remarked: ``One could, in principle, simply `guess' the factorization of the polynomial $P$ above, and check by multiplying out. However, the above symmetry arguments certainly enhance such `guessing'."

	\end{appendices}
%\bibliographystyle{alphanum}
%    Insert the bibliography data here.
%\bibliography{references}

\def\cprime{$'$}

\end{document}